\documentclass[10pt]{amsart}
\usepackage{amsmath}
\usepackage{amssymb}
\usepackage{enumerate}
\usepackage{amsbsy}
\usepackage{amsfonts}
\usepackage{color}
\usepackage{upgreek}
\usepackage{xcolor}
\usepackage{tikz}

\headsep 30pt \headheight 20pt \textheight=23cm \textwidth=16.5cm
\topmargin=-1cm \oddsidemargin=-0.45cm \evensidemargin=-0.46cm
\marginparwidth=60pt

\setlength\arraycolsep{2pt}

\newtheorem{thm}{Theorem}[section]
\newtheorem{lem}[thm]{Lemma}
\newtheorem{prop}[thm]{Proposition}

\numberwithin{equation}{section}

\linespread{1.3}

\pagestyle{headings}

\begin{document}

	\title[Nonlinear Dirac equations on curved spacetime]{Global solutions to cubic Dirac and Dirac-Klein-Gordon systems on spacetimes close to the Minkowski space}

 \author[S. Hong]{Seokchang Hong}
    \address{Fakult\"at f\"ur Mathematik, Universit\"at Bielefeld, Bielefeld, Germany}
    \email{shong@math.uni-bielefeld.de}

	\thanks{{\it Key words and phrases. Nonlinear Dirac equation, Klein-Gordon equation, curved background, global existence, pointwise decay, vector fields, energy inequality, hyperboloidal foliation, bootstrap argument}  }
	
	\begin{abstract}
		We establish global existence and derive sharp pointwise decay estimates of solutions to cubic Dirac and Dirac-Klein-Gordon systems on a curved background, close to the Minkowski spacetime. By squaring the Dirac operator, we reduce the analysis to a nonlinear wave-type equation involving spinorial connections, and apply energy estimates based on vector field methods and the hyperboloidal foliation framework, introduced by LeFloch–Ma. A key difficulty arises from the commutator structure of the Dirac operator, which exhibits significantly different behaviour from that of scalar field equations and requires refined control throughout the analysis, particularly due to the spacetime-dependent gamma matrices, which reduce to constant matrices in the flat Minkowski spacetime.
	\end{abstract}

		\maketitle

\section{Introduction}
\subsection{Motivation and Background}
In the early twentieth century, the Dirac equation was originally derived by factorising the Klein-Gordon equation $(\Box-m^2)\phi=0$. Dirac's approach \cite{D} was successful, and the resulting equation plays a central role in quantum field theory, effectively describing the spin $\frac12$-fermions such as electrons. On the Minkowski spacetime $(\mathbb R^{1+3},\mathbf m)$, where the metric $\mathbf m$ is given by $\textrm{diag}(-1,+1,+1,+1)$, the linear Dirac equation takes the form
\begin{align}
    -i\gamma^\mu\partial_\mu\psi+m\psi = 0,
\end{align}
where $m\ge0$ denotes the mass of the particle and $\psi:\mathbb R^{1+3}\to \mathbb C^4$ is a complex-valued spinor field. The gamma matrices $\gamma^\mu\in\mathbb C^{4\times4}$, $\mu=0,1,2,3$ satisfies the algebraic relation: $\gamma^\mu\gamma^\nu+\gamma^\nu\gamma^\mu=-2m^{\mu\nu}I_{4}$, where $I_4$ is the $4\times4$ identity matrix.

From the mathematical perspective, the Dirac equation is a first-order hyperbolic system. Hence several analytic techniques, which have been well-developed in the wave and the Klein-Gordon equations, can be applied to the analytic study of the Dirac equation, in that the Dirac equation can be formulated as the half-wave in the massless case $m=0$ and the half-Klein-Gordon equations in the massive case $m>0$, respectively. 

In recent decades, nonlinear problems of the Dirac equations have been extensively studied, especially in the context of dispersive equations. Global well-posedness and scattering results for cubic Dirac equations have been established by \cite{machihara,behe,boucan}. The Dirac-Klein-Gordon system has been well-studied by \cite{anfoselb,wang,beherr,canhe}.

When one is concerned with the Dirac equation in a general Lorentz manifold $(\mathcal M,g)$, the usual partial differentiation in the equation must then be replaced by the spinorial covariant derivative. To be precise, the Dirac equation on a curved spacetime is given by
\begin{align*}
    -i\gamma^\mu(t,x)\mathbf D_\mu\psi +m\psi =0,
\end{align*}
where $\mathbf D_\mu$ is the covariant derivative acting on the spinor field $\psi$. The precise definition of $\mathbf D_\mu$ will be discussed in Section \ref{sec:dirac-operator}. In contrast to the flat case, the gamma matrices now depend on spacetime points. 
 Although the covariant Dirac equation naturally arises from the spin structure of a given Lorentz manifold, this is also essential for quantum field theory in curved spacetime, including physical phenomena such as Hawking radiation, and semiclassical gravity.

Recently, the mathematical analysis of the Dirac equation on a curved spacetime has also seen notable progress. B\"ar \cite{bar} investigated spectral properties of the covariant Dirac operator; Cacciafesta, Suzzoni, Meng \cite{caccia1} derived Strichartz estimates. More recently, the sharp decay for the linear massless Dirac equation on the Schwarzschild background was established by Ma and Zhang \cite{mazhang}.

Nevertheless, most of the previous results for the Dirac equation on a curved background have focused on the linear and massless setting. Moreover, the nonlinear analysis of the Dirac equation on a curved spacetime has remained relatively less explored.

Motivated by these considerations, we investigate the global behaviour of nonlinear Dirac equations on an asymptotically flat background, which is close to the Minkowski spacetime. We then establish global existence and sharp pointwise decay estimates of solutions to the systems in this setting.

\subsection{Main results}
The cubic Dirac equations and the Dirac-Klein-Gordon (DKG) system on a curved background are given by
\begin{align}
\begin{aligned}
	(-i\gamma^\mu\mathbf D_\mu+M)\psi = (\psi^\dagger\gamma^0\psi)\psi, \\
	\psi|_{t=2} := \psi_0,
	\end{aligned}
\end{align}
and
\begin{align}
\begin{aligned}
	(-i\gamma^\mu\mathbf D_\mu+M)\psi &= \phi\psi, \\
	(\Box_g-m^2)\phi &= \psi^\dagger\gamma^0\psi, \\
	(\psi,\phi,\partial_t\phi)|_{t=2}&:= (\psi_0,\phi_0,\phi_1),
	\end{aligned}
\end{align}
where $\psi:\mathbb R^{1+3}\to\mathbb C^4$ is a spinor field, $\phi:\mathbb R^{1+3}\to\mathbb C$ is a scalar field, and $\psi^\dagger$ is the complex conjugate transpose of $\psi$. Our aim is to establish global existence and sharp pointwise decay estimates of solutions to the systems on an asymptotically flat background, close to the Minkowski spacetime. To be precise,
we consider a smooth Lorentzian metric in $\mathbb R^+_t\times\mathbb R^3_x$, such that
\begin{enumerate}
	\item $t=const.$ is space-like.
	\item The vector field $\partial_t$ is a (time-like) Killing field, i.e., $g$ is stationary.
	\item $g$ is asymptotically flat in the following sense:
	\begin{align*}
		g= m +g_{sr}+g_{lr},
	\end{align*}
	where $g_{lr}$ is a long range spherically symmetric component and $g_{sr}$ is a short range component such that
    \begin{align}\label{metric-decay}
    \begin{aligned}
        |\partial_x^\alpha g_{lr}|\le c^{lr}_\alpha \langle r\rangle^{-1-|\alpha|}, \quad |\partial_x^\alpha g_{sr}| \le c^{sr}_\alpha \langle r\rangle^{-2-|\alpha|}, \quad |\alpha|\le N+2, 
        \end{aligned}
    \end{align}
    where $N$ will be fixed later.
    \item We suppose that the constants $c_\alpha^{lr}$ and $c_{\alpha}^{sr}$ are sufficiently small, for every $|\alpha|\le N+2$, which ensures that the curved geometry $(\mathbb R^+_t\times\mathbb R^3_x,g)$ is close to the Minkowski spacetime $(\mathbb R^+_t\times\mathbb R^3_x,\mathbf m)$.
\end{enumerate}
We also write
\begin{align*}
	g_{lr} = g_{lr,tt}(r)dt^2+g_{lr,rr}(r)dr^2+g_{lr,\omega\omega}(r)r^2d\omega^2, \\
	g_{sr} = g_{sr,tt}(x)dt^2+2g_{sr,tj}(x)dtdx^j+g_{sr,ij}(x)dx^idx^j.
\end{align*}
Now we use the normalized coordinates introduced in \cite{metato,tataru} and write
\begin{align*}
	\Box_g = \Box+g^\omega_{lr}(r)\Delta_\omega+\partial_\alpha g^{\alpha\beta}_{sr}\partial_\beta 
\end{align*}
where $g^\omega_{lr}(r)\approx \langle r\rangle^{-3}$ and $g^{00}_{sr}=0$. In other words, the metric $g$ can be written as
\begin{align*}
	g &= m+g_{lr}+g_{sr} \\
	& = m+g_{\omega}(r)r^2d\omega^2+2g_{sr,tj}(x)dtdx^j+g_{sr,ij}(x)dx^idx^j,
\end{align*}
where $m$ is the Minkowski metric.
In other words, we consider the metric $g=-dt^2+2g_{0j}(x)dtdx^j+g_{ij}(x)dx^idx^j$ where the corresponding Laplace-Beltrami operator is given by
\begin{align*}
	\Box_g = \Box+g^\omega_{lr}(r)\Delta_\omega+\partial_\alpha g^{\alpha\beta}_{sr}\partial_\beta .
\end{align*} 
We refer the readers to Section 2 of \cite{tataru} for the details of the normalised coordinates.

In what follows, we restrict ourselves to sufficiently smooth functions which are spatially compactly supported inside the forward light cone $\{(t,x)\in \mathbb R^{+}_t\times\mathbb R^3_x: |x|< t-1\}$ with $t\ge2$. Then we foliate this region via the hyperboloid $\Sigma_{\tau}:=\{(t,x): \tau = \sqrt{t^2-|x|^2}\}$.
Define the energy of the spinor $\psi:\mathbb R^{1+3}\to\mathbb C^4$,
\begin{align*}
	E_c[\psi](\tau) := \int_{\Sigma_\tau} \sum_{j=1}^3\left| \partial_j\psi+\frac{x^j}{t}\partial_t\psi \right|^2+\left| \frac\tau t\partial_t\psi\right|^2+c^2|\psi|^2\,dx,
\end{align*}
where $|\psi|^2 = \psi^\dagger\psi$ and $\psi^\dagger$ is the complex conjugate transpose of $\psi$. The energy of the scalar field $\phi$ from the (DKG) system is defined in the obvious way.
We refer the readers to the Appendix for the detailed discussion of the derivation of the energy.
We denote by $\partial^I$ and $L^J$ the product of $|I|$-partial derivatives $\partial_t,\partial_i$, and the product of $|J|$ Lorentz boost $L_i=t\partial_i+x^i\partial_t$. Then we consider the initial value problems for the cubic Dirac and the (DKG) systems:
\begin{align}\label{eq-cubic-dirac}
\begin{aligned}
	(-i\gamma^\mu\mathbf D_\mu+M)\psi = (\psi^\dagger\gamma^0\psi)\psi, \\
	\psi|_{t=2} := \psi_0\in H^N(\mathbb R^3),
	\end{aligned}
\end{align}
and
\begin{align}\label{eq-dkg}
\begin{aligned}
	(-i\gamma^\mu\mathbf D_\mu+M)\psi &= \phi\psi, \\
	(\Box_g-m^2)\phi &= \psi^\dagger\gamma^0\psi, \\
	(\psi,\phi,\partial_t\phi)|_{t=2}&:= (\psi_0,\phi_0,\phi_1)\in H^N\times H^N\times H^{N-1}(\mathbb R^3),
	\end{aligned}
\end{align}
where $N\ge13$ is a fixed integer. We let $\epsilon_0=\epsilon_0(N,g)$ be a sufficiently small quantity. By $\epsilon_0=\epsilon_0(g)$ we mean that the $\epsilon_0$ is dependent on the constant $c_\alpha$ in \eqref{metric-decay}. Assume that the initial data  $\psi_0,\phi_0,\phi_1$ satisfy the following smallness condition:
\begin{align}
\|\psi_0\|_{H^N(\mathbb R^3)}+\|\phi_0\|_{H^N(\mathbb R^3)}+\|\phi_1\|_{H^{N-1}(\mathbb R^3)} \le \varepsilon,	
\end{align}
where $0<\varepsilon<\epsilon_0$. By Theorem 11.2.1 of \cite{flochma1}, one can construct a local-in-time solution from the data given on the initial hyperboloid $\Sigma_{\tau_0}$ with $\tau_0=2$, which guarantees that\footnote{One should notice that the aforementioned theorem in \cite{flochma1} concerns local solutions to the Klein-Gordon equations, while we are concerned with the Dirac equation, which is first order. Indeed, we will work on nonlinear wave-type equations by squaring the Dirac operator. However, one cannot apply the results from \cite{flochma1} directly, since the nonlinear equations involve lower-order terms which are not covered by Theorem 11.2.1 of \cite{flochma1}. Thanks to the closeness to the Minkowski space, this gap is not harmful, since the lower-order terms can be absorbed somewhere. } on $\Sigma_{\tau_0}$ and for all $|I|+|J|\le N$,
\begin{align}
E_M[\partial^IL^J\psi](\tau_0)^\frac12 \le C_0\varepsilon, \quad 	E_m[\partial^IL^J\phi](\tau_0)^\frac12 \le C_0\varepsilon,
\end{align}
where $C_0$ is an absolute constant. Now we state our main results.
\begin{thm}\label{thm-global-cubic}
	Let $N\ge11$ be a fixed number and let the metric $g$ be given as \eqref{metric-decay}. There exists a small quantity $\epsilon_0=\epsilon_0(N,g)>0$ such that for all $0<\varepsilon<\epsilon_0$ and for all spatially compactly supported initial data $\psi_0$ satisfying the smallness condition $\|\psi_0\|_{H^N(\mathbb R^3)}\le\varepsilon$, the Cauchy problem of the cubic Dirac equation \eqref{eq-cubic-dirac} admits a global-in-time solution $\psi$ and the solution $\psi=\psi(t,x)$ satisfy the following energy bound: for all $\tau\ge\tau_0$
	\begin{align}
	E_M[\partial^IL^J\psi](\tau) \lesssim \varepsilon^2
	\end{align}
	and the following pointwise decay estimate:
	\begin{align}
	\sup_{(t,x)\in \Sigma_\tau}t^\frac32|\psi(t,x)| \lesssim \varepsilon .
	\end{align}
	\end{thm}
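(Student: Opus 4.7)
The plan is to reduce the first-order Dirac system \eqref{eq-cubic-dirac} to a second-order wave-Klein-Gordon type equation on the curved background by squaring the covariant Dirac operator, and then to run a bootstrap argument on the hyperboloidal energies $E_M[\partial^I L^J \psi]$ for $|I|+|J|\le N$ via the LeFloch--Ma vector-field method. The first step is to apply $(-i\gamma^\mu \mathbf D_\mu - M)$ to \eqref{eq-cubic-dirac} and use the Clifford identity together with a Lichnerowicz--Weitzenb\"ock type computation to obtain, schematically,
\begin{align*}
 (\Box_g - M^2)\psi = (-i\gamma^\mu \mathbf D_\mu - M)\bigl[(\psi^\dagger\gamma^0\psi)\psi\bigr] + \mathcal L[\psi, \mathbf D\psi],
\end{align*}
where $\mathcal L$ collects contributions from the spinorial connection, the scalar curvature, and derivatives of the spacetime-dependent $\gamma^\mu(t,x)$. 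Since $\gamma^\mu$ reduces to constant matrices in Minkowski, every term in $\mathcal L$ carries at least one factor of $g_{sr}+g_{lr}$ or of its derivatives, and therefore inherits the smallness and decay from \eqref{metric-decay}.

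Next, I would set up the bootstrap: fix a large constant $C_1 \gg C_0$ and assume, on a maximal interval $[\tau_0,\tau_*)$, that
\begin{align*}
 E_M[\partial^I L^J \psi](\tau)^{1/2} \le C_1\varepsilon \qquad \text{for all } |I|+|J|\le N.
\end{align*}
The Klainerman--Sobolev inequality on hyperboloids (see \cite{flochma1}) immediately converts $N-2$ derivatives of energy control into the pointwise bound $\sup_{\Sigma_\tau} t^{3/2}|\partial^I L^J\psi| \lesssim C_1\varepsilon$ for $|I|+|J|\le N-2$, which in particular yields the stated pointwise decay of $\psi$ once the bootstrap constant is improved.

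To improve the energy bound I would commute $\partial^I L^J$ through the squared equation and run the standard hyperboloidal multiplier estimate. This is where the main difficulty lies. Three distinct commutator errors appear: (i) $[\partial^I L^J, \Box_g - \Box]$, producing derivatives of the metric perturbation and hence small by \eqref{metric-decay}; (ii) $[\partial^I L^J, \mathbf D_\mu - \partial_\mu]$, which brings in the spinorial connection $\omega_\mu$ and its Lorentz-boosted versions; and (iii) terms from differentiating $\gamma^\mu(t,x)$. Each of these has to be redistributed between $L^\infty$ and $L^2$ factors and then absorbed using the smallness of $c_\alpha^{lr}, c_\alpha^{sr}$ together with the hyperboloidal weights $\tau/t$ and $t/\tau$. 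This is the step highlighted in the abstract and is the real heart of the argument; I expect it to be the main technical obstacle.

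Finally, the cubic source is handled as in the flat-space case: two of the three spinors are placed in $L^\infty$ via the pointwise decay $t^{-3/2}$ just obtained, while the third is absorbed into the mass part of $E_M^{1/2}$. The integrand in Gr\"onwall's inequality is then controlled by $C_1^3 \varepsilon^3 \tau^{-2}$, which is integrable on $[\tau_0,\infty)$. Combining this with the smallness of the connection and curvature errors, one arrives at
\begin{align*}
 E_M[\partial^I L^J \psi](\tau)^{1/2} \le C_0 \varepsilon + C\,\epsilon_0\, C_1 \varepsilon + C\, C_1^3 \varepsilon^3 \le \tfrac{1}{2} C_1 \varepsilon
\end{align*}
for $C_1$ chosen large relative to $C_0$ and $\varepsilon < \epsilon_0$ small enough. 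This strictly improves the bootstrap assumption, so by continuity $\tau_* = \infty$ and both claims of the theorem follow.
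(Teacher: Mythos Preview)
Your overall architecture matches the paper: square the Dirac operator to obtain a Klein--Gordon type equation with lower-order spinorial terms, commute $\partial^I L^J$, run a hyperboloidal bootstrap, and close the cubic source by placing two factors in $L^\infty$ via Klainerman--Sobolev. The cubic contribution is indeed harmless for the reason you give.

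There is, however, a genuine gap in the bootstrap you propose. You assume the \emph{uniform} bound $E_M[\partial^I L^J\psi](\tau)^{1/2}\le C_1\varepsilon$ for all $|I|+|J|\le N$ and plan to absorb the connection errors using only the smallness of $c^{lr}_\alpha,c^{sr}_\alpha$. This does not close. After squaring, the right-hand side carries the linear terms $\Gamma^\mu\partial_\mu\psi$ and $V\psi$ (with $|\Gamma_\mu|\lesssim|\partial g|$, $|V|\lesssim|\partial^2 g|+|\partial g|^2$). When you rewrite $\partial_j=t^{-1}L_j-\tfrac{x^j}{t}\partial_t$ and use $\|\tfrac{\tau}{t}\partial_t\phi\|_{L^2(\Sigma_\tau)}\le E[\phi]^{1/2}$, these terms contribute at best
\[
\|\partial^IL^J(\Gamma^\mu\partial_\mu\psi)\|_{L^2(\Sigma_\tau)}+\|\partial^IL^J(V\psi)\|_{L^2(\Sigma_\tau)}\ \lesssim\ c\,\tau^{-1}\,E_M[\partial^{I'}L^{J'}\psi](\tau)^{1/2}
\]
for some $|I'|+|J'|\le N$. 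Under your uniform assumption the integrand is $c\,C_1\varepsilon\,\tau^{-1}$, and the time integral is $c\,C_1\varepsilon\log\tau$; no choice of small $c$ prevents this from eventually exceeding $\tfrac12 C_1\varepsilon$. The same borderline $\tau^{-1}$ decay appears in the commutator $[\Box_g,\partial^IL^J]$ once the $t$-factors produced by Lorentz boosts acting on the (stationary) metric are handled; here the paper relies on the hyperboloidal Hardy inequality $\|r^{-1}\phi\|_{L^2(\Sigma_\tau)}\lesssim\sum_i\|t^{-1}L_i\phi\|_{L^2(\Sigma_\tau)}$, which you do not invoke.

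The paper resolves the logarithmic loss by replacing the uniform bootstrap with a \emph{hierarchy} that allows slow polynomial growth:
\[
E_M[\partial^IL^J\psi](\tau)^{1/2}\le C\varepsilon\,\tau^{(|I'|+|J|)\delta}\quad(|I|+|J|\le N-4),\qquad
E_M[\partial^IL^J\psi](\tau)^{1/2}\le C\varepsilon\,\tau^{\frac12+(|I'|+|J|)\delta}\quad(N-3\le|I|+|J|\le N),
\]
where $\partial^I=\partial_t^{I'}\partial_x^{I''}$ and $\delta\in[\tfrac{1}{10N},\tfrac{1}{5N}]$. The linear estimates trade a $\partial_t$ for an $L$ (preserving $|I'|+|J|$), so the integrand becomes $c\,\tau^{-1+(|I'|+|J|)\delta}$, which integrates to $\frac{c}{(|I'|+|J|)\delta}\,\tau^{(|I'|+|J|)\delta}$ and closes once $c$ is small relative to $\delta$; the pure $\partial_x$ case $|I'|+|J|=0$ is treated separately with genuinely better decay. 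Only after this growing bootstrap yields global existence does the paper recover the uniform energy bound stated in the theorem. Without this hierarchy (and the accompanying Hardy inequality), your argument stalls at the linear terms.
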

We note that the global well-posedness and scattering for the cubic Dirac equation on a weakly asymptotically flat background has already been established by the present author and Herr \cite{herrh}, where we proved endpoint Strichartz estimates for the half-Klein–Gordon equation and obtained global results for small initial data in $H^s(\mathbb R^3)$ with $s > 1$, covering the whole subcritical regime.
In light of that result, Theorem \ref{thm-global-cubic} seems a special case, which is included in \cite{herrh}.

However, our main novelty of this paper lies in global result on the (DKG) system, which possesses quadratic nonlinearity.

\begin{thm}\label{thm-global-dkg}
	Let $N\ge11$ be a fixed number and let the metric $g$ be given as \eqref{metric-decay}. There exists a small quantity $\epsilon_0=\epsilon_0(N,g)>0$ such that for all $0<\varepsilon<\epsilon_0$ and for all compactly supported initial data $(\psi_0,\phi_0,\phi_1)$ satisfying the smallness condition $\|\psi_0\|_{H^N(\mathbb R^3)}+\|\phi_0\|_{H^N}+\|\phi_1\|_{H^{N-1}}\le\varepsilon$, the Cauchy problem of the (DKG) system \eqref{eq-dkg} admits global-in-time solutions $(\psi,\phi)$ and the solutions $\psi=\psi(t,x)$, $\phi=\phi(t,x)$ satisfy the following energy estimates: for all $\tau\ge\tau_0$
	\begin{align}
	\begin{aligned}
	E_M[\partial^IL^J\psi](\tau)+E_m[\partial^IL^J\phi](\tau)\lesssim\varepsilon^2
	\end{aligned}
	\end{align}
	and the pointwise decay
	\begin{align*}
		\sup_{(t,x)\in \Sigma_\tau}t^\frac32|\psi(t,x)|+\sup_{(t,x)\in \Sigma_\tau}t^\frac32|\phi(t,x)| \lesssim \varepsilon .
	\end{align*}
		\end{thm}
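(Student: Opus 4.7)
The plan is to follow the hyperboloidal framework of LeFloch–Ma and reduce both unknowns to Klein–Gordon-type equations on the curved background. Applying the operator $-i\gamma^\nu\mathbf{D}_\nu - M$ to the first equation in \eqref{eq-dkg} and using the Clifford relation $\gamma^\mu\gamma^\nu+\gamma^\nu\gamma^\mu=-2g^{\mu\nu}I_4$ (together with the defining identities for $\mathbf{D}_\mu$ reviewed in Section \ref{sec:dirac-operator}), the spinor $\psi$ satisfies a second-order equation of the schematic form
\begin{equation*}
(\Box_g - M^2)\psi \;=\; \mathcal N_\psi(\phi,\partial\phi,\psi,\partial\psi)\;+\;\mathcal R_g(\psi),
\end{equation*}
where $\mathcal N_\psi$ collects the quadratic interaction $\phi\psi$ together with derivative-quadratic pieces such as $\gamma^\mu(\partial_\mu\phi)\psi$, and $\mathcal R_g$ gathers curvature and spin-connection terms whose coefficients inherit the $\langle r\rangle^{-2-|\alpha|}$ smallness from \eqref{metric-decay}. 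Coupled with the scalar equation $(\Box_g-m^2)\phi=\psi^\dagger\gamma^0\psi$, this yields a Klein–Gordon-type system on which hyperboloidal energy estimates can be deployed.

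I would then set up a continuity/bootstrap argument on a maximal interval $[\tau_0,\tau_*)$: assuming
\begin{equation*}
E_M[\partial^I L^J\psi](\tau)^{1/2}+E_m[\partial^I L^J\phi](\tau)^{1/2}\le C_1\varepsilon,\qquad |I|+|J|\le N,
\end{equation*}
with $C_1\gg C_0$, I would commute the squared system with $\partial^I L^J$. Here the novelty compared to the scalar LeFloch–Ma setting appears: the commutators $[L_i,\gamma^\mu(t,x)]$, $[L_i,\mathbf D_\mu]$ and $[\partial^IL^J,\Box_g]$ produce lower-order terms carrying either derivatives of the gamma matrices or derivatives of $g_{sr},g_{lr}$. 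Thanks to the asymptotic-flatness hypothesis and the closeness to Minkowski, these can be treated as small perturbative forcing terms, absorbable into the left-hand side via Gronwall on the hyperboloidal energy $E_M$ and $E_m$. The global Sobolev inequality on hyperboloids then yields
\begin{equation*}
t^{3/2}|\partial^IL^J\psi(t,x)|+t^{3/2}|\partial^IL^J\phi(t,x)|\;\lesssim\;\sum_{|K|\le 2}\bigl(E_M[\partial^IL^{J+K}\psi]^{1/2}+E_m[\partial^IL^{J+K}\phi]^{1/2}\bigr),
\end{equation*}
for all $|I|+|J|\le N-2$, which already gives the claimed pointwise decay once the bootstrap is closed.

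To close the bootstrap one estimates the nonlinear contributions in $L^2(\Sigma_\tau)$. The Klein–Gordon source $\psi^\dagger\gamma^0\psi$ is easy: it is bilinear in $\psi$, so putting one factor in $L^\infty$ (using the pointwise bound above) and the other in $L^2$ produces a factor $t^{-3/2}$, integrable in $\tau$ after the $\tau/t$-weight in $E_m$. The delicate term is the Dirac source $\phi\psi$, which is \emph{quadratic}, not cubic; after commuting with $\partial^IL^J$ at top order, one has to distribute derivatives so that the undifferentiated factor decays like $t^{-3/2}$ while the differentiated factor sits in $L^2$. A crude $L^\infty\cdot L^2$ bound gives a borderline growth $\log\tau$ in top-order energies, so I expect a hierarchy argument à la LeFloch–Ma: prove stronger decay for low-order $(|I|+|J|\le N/2)$ norms using the pointwise bound, and allow mild polynomial growth $\tau^{\delta}$ at top order, ensuring the product still closes.

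The main obstacle is precisely this quadratic $\phi\psi$ interaction: on Minkowski it is typically handled via Klein–Gordon null structure or the $\psi\sim t^{-3/2}$ decay combined with hidden algebraic cancellations in $\gamma^\mu\partial_\mu$, but on the curved background both the Lorentz boosts $L_i$ and the gamma matrices lose their algebraic compatibility. Controlling the commutator $[\gamma^\mu(t,x)\mathbf D_\mu, L^J]$ sharply enough to exploit the $\tau/t$-gain in the $L^\infty$-decay of $\partial_t\psi$, while absorbing the $\langle r\rangle^{-1}$ long-range metric component using its smallness $c^{lr}_\alpha\ll 1$, will be the technical heart of the argument.
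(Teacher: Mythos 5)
Your proposal captures the paper's overall architecture — square the Dirac operator to a Klein–Gordon-type equation, foliate by hyperboloids, bootstrap a hierarchy of energies, and close the nonlinear terms by $L^\infty\cdot L^2$ with the Klainerman–Sobolev inequality — and that is indeed the paper's route. However, there are genuine gaps in the way you propose to organize the bootstrap and in where you locate the true difficulty.

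First, you misattribute the source of the energy growth. You reason that the quadratic source $\phi\psi$ forces a $\log\tau$ borderline at top order, and you therefore propose allowing a mild $\tau^\delta$ growth there. In fact the quadratic interaction is not the obstruction: the paper's Proposition~\ref{prop-dkg-est} uses the energy decomposition $\|\partial_t\phi\|_{L^2(\Sigma_\tau)}\le\frac t\tau E_m[\phi]^{1/2}$ and $\|(\partial_j+\frac{x^j}{t}\partial_t)\phi\|_{L^2}\le E_m[\phi]^{1/2}$, paired with the $t^{-3/2}$ pointwise decay of the other factor, so the integrand for the quadratic term decays like $t^{-1/2}\tau^{-1/2}$ and is summable without any growth allowance. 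What actually forces the hierarchy is the \emph{linear} perturbation terms $V\psi$ and $\Gamma^\mu\partial_\mu\psi$ that Proposition~\ref{prop-squaring-dirac} produces when the Dirac operator is squared; these have coefficients with $\langle r\rangle^{-2}$ or $\langle r\rangle^{-3}$ decay, and after the Hardy inequality of Lemma~\ref{hardy-ineq} one only gains a factor $\tau^{-1}$, not $\tau^{-3/2}$. With a uniform bootstrap $E^{1/2}\le C_1\varepsilon$ this gives an unavoidable $\varepsilon\log\tau$ loss; the paper therefore bootstraps with $E^{1/2}\le C\varepsilon\tau^{1/2+(|I'|+|J|)\delta}$ for $N-3\le|I|+|J|\le N$ — growth $\tau^{1/2}$ at top order, not your proposed $\tau^\delta$.

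Second, your hierarchy is indexed by the total order $|I|+|J|$, but the paper's hierarchy is indexed by $|I'|+|J|$, the number of time derivatives plus Lorentz boosts, writing $\partial^I=\partial_t^{I'}\partial_x^{I''}$. This distinction is not cosmetic: whenever the Hardy inequality or the algebraic identity $\partial_j=t^{-1}L_j-\frac{x^j}{t}\partial_t$ is used to soak up the dangerous $t$-factors that arise from $[\,\cdot\,,L^J]$, a spatial derivative is traded for a Lorentz boost, and the bookkeeping only closes if the exponent tracks exactly this count. Finally, you describe the commutators $[\gamma^\mu\mathbf D_\mu,L^J]$ as an obstacle without giving a mechanism to resolve them; the paper's key lemma (Proposition~\ref{prop-dirac-reduce}) shows that after the Hardy absorption of the $t$-powers, the estimate of $\|\partial^IL^J\gamma^\mu\mathbf D_\mu F\|_{L^2(\Sigma_\tau)}$ reduces to $\sum_{|I_1|\le|I|}\|\partial_\mu\partial^{I_1}L^J F\|_{L^2(\Sigma_\tau)}$, turning the spinorial nonlinear estimates into purely scalar ones. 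Without this reduction and the Hardy-inequality mechanism, your scheme as written cannot control the $t^{|J_1|}$ growth that appears when several boosts fall on the spacetime-dependent $\gamma^\mu$ or on the short-range metric coefficients.
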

Our results, Theorem \ref{thm-global-cubic} and Theorem \ref{thm-global-dkg}, appear to be one of the first steps toward a quantitative understanding of nonlinear Dirac systems on curved backgrounds. Our approach draws inspiration from techniques developed for scalar equations, particularly the vector field approach and hyperboloidal method, introduced by Klainerman \cite{klai1} and further refined in the works of LeFloch - Ma \cite{flochma1}.
While our setting is distinct, we follow the analytic spirit of these earlier developments.
        
The proof mainly relies on the vector field approach. We refer the readers to \cite{donglefloch,dongli,donglimayuan,qzhang,qzhang1} for the works concerning the Dirac equations in the flat setting via vector field methods.

\subsection{Technical challenge on a curved spacetime}
We remark that an extension of existing results concerning nonlinear Dirac equations on the Minkowski spacetime to a curved background introduces several new technical difficulties.
\subsubsection*{ $\bullet$ Sensitivity to geometric structure} First, the Dirac field is much more sensitive to the geometry of the spacetime. Indeed, the curved geometry $g$ determines the algebraic relation:
\begin{align*}
	\frac12(\gamma^\mu\gamma^\nu+\gamma^\nu\gamma^\mu) = -g^{\mu\nu}I_4,
\end{align*}
which the gamma matrices must obey. This algebraic structure then determines how the Dirac operator is defined: $-i\gamma^\mu\mathbf D_\mu\psi+m\psi = 0$. This is evident upon squaring the Dirac equation, which yields the following second-order equation:\footnote{By definition of the spinorial connection $\Gamma_\mu$, it transforms as a $1$-form under the change of coordinates. See also the proof of Proposition \ref{prop-squaring-dirac} for the details. Thus, all the lower-order terms behave as scalar functions.}
\begin{align*}
	\left(\Box_g-m^2+2\Gamma^\mu\partial_\mu+\nabla^\mu\Gamma_\mu+\Gamma^\mu\Gamma_\mu+\frac{\mathbf R}{4}\right)\psi =0,
\end{align*}
where $\mathbf R$ is the scalar curvature associated to the metric $g$, $\Gamma_\mu$ is the spinorial connection, and $\nabla_\mu$ is the covariant derivative acting on a scalar field. See also Proposition \ref{prop-squaring-dirac}. The second-order equation derived from the Dirac field possesses a purely geometric quantity and also a spinorial potential term, which does not appear in scalar field equations, such as the wave or the Klein-Gordon. Moreover, the presence of such lower-order terms makes global-in-time problem more subtle and requires delicate analysis, unlike the scalar field equations. One way of remedying this difficulty is to impose a stronger assumption on the metric. In fact, by an asymptotically flat background which is close to the Minkowski spacetime, we mean the metric $g$ satisfies the inequality of the form  $|\partial_x^\alpha(g-m)|\le c_\alpha\langle r\rangle^{-1-|\alpha|}$, where the constants $c_\alpha$ are sufficiently small, so that trapping is excluded. However, due to the presence of the lower-order terms, this smallness might not be enough, and one needs even smaller constants $c_\alpha$, to close the bootstrap argument and establish global solutions. Roughly speaking, the Dirac field demands much stronger assumptions on the underlying geometry, in order to achieve an analogue result as in the wave or the Klein-Gordon equations.
\subsubsection*{ $\bullet$ Effects of the mass term on decay and conformal structure}
Second, the presence of the mass term $m>0$ introduces additional difficulties on a curved background, unlike on the Minkowski spacetime, where the mass term improves the time-decay compared to the wave equation. For the massive Dirac, in which case one does not  have conformal invariance, the vector field such as $S=t\partial_t+r\partial_r$ is not useful anymore. Even worse, the mass term interacts unfavourably with the geometry and makes the time decay to be $t^{-\frac56}$, much weaker than $t^{-\frac32}$, especially in the presence of a trapping region. See \cite{finster} and \cite{pasqua}.

In this work, under the non-trapping assumption we recover the optimal decay $t^{-\frac32}$, thereby compensating the absence of conformal symmetry. 
This suggests that extending our analysis to systems such as the Maxwell-Dirac, where the gauge field decays only as $t^{-1}$, introduces an additional challenge. 
We expect that extending the methods developed in this paper to such a system would be an interesting direction for future research.

\subsection{Main strategy} Now we give a key to the proof.
\subsubsection*{ $\bullet$ Squaring the Dirac operator}
We first square the Dirac operator. For the inhomogeneous Dirac equation $(-i\gamma^\mu\mathbf D_\mu+M)\psi = F$, by acting the Dirac operator $\gamma^\nu\mathbf D_\nu$ on both sides, we obtain
\begin{align*}
	\left(\Box_g-M^2+2\Gamma^\mu\partial_\mu+\nabla^\mu\Gamma_\mu+\Gamma^\mu\Gamma_\mu+\frac{\mathbf R}{4}\right)\psi = -i\gamma^\mu\mathbf D_\mu F-MF,
\end{align*}
where $\Box_g$ is the usual Laplace-Beltrami operator acting on a scalar field associated to the metric $g$. See also Proposition \ref{prop-squaring-dirac}. Due to the spinorial connection terms, one cannot view this equation as the usual Klein-Gordon equation. However, using an obvious inequality $|\Gamma_\mu|\lesssim |\partial g|$, we can consider this wave-type equation as the Klein-Gordon equation with a potential:
\begin{align*}
	(\Box_g-M^2)\psi = V\psi-2\Gamma^\mu\partial_\mu\psi-i\gamma^\mu\mathbf D_\mu F-MF,
\end{align*}
where the poitential $V$ satisfies the inequality $|V(x)|\lesssim |\partial^2g|+|\partial g|^2$. In what follows, the cubic Dirac and the (DKG) systems are transferred to the following nonlinear wave-type equations:
\begin{align*}
	(\Box_g-M^2)\psi = V\psi-2\Gamma^\mu\partial_\mu\psi-i\gamma^\mu\mathbf D_\mu (\psi^\dagger\gamma^0\psi)\psi -M(\psi^\dagger\gamma^0\psi)\psi,
\end{align*} 
and
\begin{align*}
	(\Box_g-M^2)\psi &= V\psi-2\Gamma^\mu\partial_\mu\psi-i\gamma^\mu\mathbf D_\mu \phi\psi-M\phi\psi, \\
	(\Box_g-m^2)\phi & = \psi^\dagger\gamma^0\psi,
\end{align*}
respectively.
A notable distinction is that the Dirac equation exhibits a purely geometric quantity, such as the scalar curvature, unlike scalar field equations such as wave or Klein-Gordon. This is one of the reflections that the Dirac equation is strongly tied to the geometry of the spacetime. Due to these lower-order terms $V\psi$ and $\Gamma^\mu\partial_\mu\psi$, global analysis is more delicate compared to the analysis on scalar fields.
\subsubsection*{ $\bullet$ Hyperboloidal foliation}
Recently, the foliation via the hyperboloid of the interior of the light cone $\{t=|x|\}$ turns out to be effective in the analysis of scalar field equations \cite{flochma1}. Indeed, the hyperboloid is invariant under the Lorentz transformation on the Minkowski spacetime. As we are concerned with an asymptotically flat spacetime with the non-trapping condition, this advantage remains to be held, and we will work with the energy naturally defined on the hyperboloid foliation and exploit the Klainerman-Sobolev inequality on the hyperboloid. We refer to \cite{flochma,flochma2,flochma3,qwang} for the hyperboloidal approach.
\begin{figure}[ht]\label{fig-hyper}
\centering
\begin{tikzpicture}[scale=1.0]
  \draw[->] (-5,0) -- (5,0) node[right] {$x$};
  \draw[->] (0,0) -- (0,5) node[above] {$t$};


  \draw[thick,gray] (0,0) -- (4.5,4.5);
  \draw[thick,gray] (0,0) -- (-4.5,4.5);
  \node at (3.7,3.1) {\footnotesize $t=|x|$};

  \draw[dashed,red,thick] (-2,2) -- (2,2) ;
  \node at (1.8,1.8) {\textcolor{red}{\footnotesize $t = 2 $}};

  \draw[dashed,thick] (0,1) -- (4,5);
  \draw[dashed,thick] (0,1) -- (-4,5);
  \node at (3.9,5) {\textcolor{black}{\footnotesize $t = |x| + 1$}};

  \foreach \t in {2,3,4} {
    \draw[domain={-\t}:{\t},smooth,variable=\x,blue,thick]
      plot ({\x},{sqrt(\x*\x+\t*\t)});
  }
    \node at (2.8,4.6) {\footnotesize $\Sigma_{\tau=4}$};
  
  \node at (2.5,3.6) {\footnotesize $\Sigma_{\tau=3}$};

   \node at (2.2,2.6) {\footnotesize $\Sigma_{\tau=2}$};


\end{tikzpicture}
\caption{Hyperboloidal foliation and the region where $|x| < t - 1$.}
\end{figure}
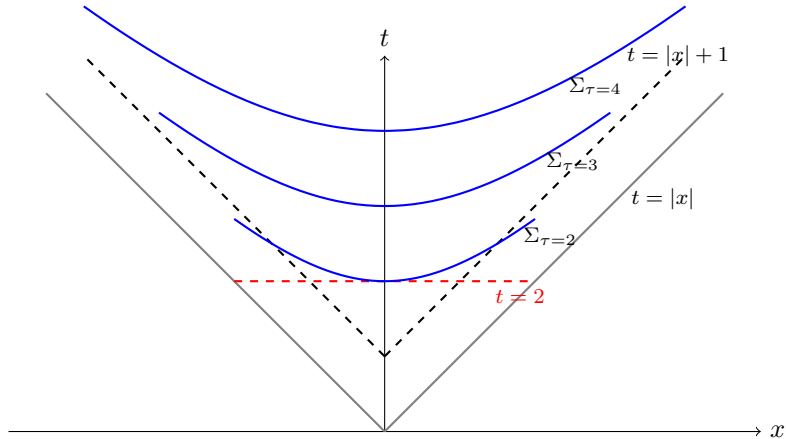

To be precise, we consider smooth functions spatially supported in the regime $\{ (t,x)\in\mathbb R^{+}_t\times\mathbb R^3_x :|x|\le t-1\}$. The dashed black lines represent the straight lines $t = |x| + 1$, which bound the support region from below. We further restrict ourselves to $t\ge2$. Then each blue curve $\Sigma_\tau = \{(t,x): t^2 - |x|^2 = \tau^2\}$ corresponds to a hyperboloidal time slice, providing a foliation of the interior of the light cone suitable. We define the energy on the hyperboloid $\Sigma_\tau$ as
\begin{align*}
    E_c[\psi](\tau) := \int_{\Sigma_\tau} \sum_{j=1}^3\left| \partial_j\psi+\frac{x^j}{t}\partial_t\psi \right|^2+\left| \frac\tau t\partial_t\psi\right|^2+c^2|\psi|^2\,dx.
\end{align*}
Now we transfer the Cauchy problems of the system in terms of the initial hypersurface $\{t=2\}$ to the Cauchy problems in terms of the initial hyperbolic surface $\{\tau=2\}$.
We first note that the support condition $|x|<t-1$ implies $2\le t\le \frac52$ on the hyperboloid $\Sigma_2$. 
Given a sufficiently small quantity $\epsilon_0=\epsilon_0(N,g)$, we consider the initial data $\psi_0,\phi_0,\phi_1$ at $t=2$ satisfying $\|\psi_0\|_{H^N}+\|\phi_0\|_{H^N}+\|\phi_1\|_{H^{N-1}}\le\varepsilon$, with $0<\varepsilon<\epsilon_0$. Then it suffices to construct a local-in-time solution to the system with the existence time $\frac12$. Indeed, this can be achieved by the discussion in Section 11 of \cite{flochma1}. Furthermore, we also have
\begin{align*}
    E_M[\partial^IL^J\psi](\tau_0)^\frac12 \le C_0\varepsilon, \quad 	E_m[\partial^IL^J\phi](\tau_0)^\frac12 \le C_0\varepsilon,
\end{align*}
for all $|I|+|J|\le N$ and some absolute constant $C_0>1$. In consequence, we have transferred the original problems to the Cauchy problems in terms of hyperbolic time.

\subsubsection*{ $\bullet$ Commutators with $\gamma^\mu\mathbf D_\mu$}
After squaring the Dirac operator, one cannot completely transfer the problem of nonlinear Dirac equations to the problem of nonlinear Klein-Gordon equation, due to the inhomogeneous term $\gamma^\mu\mathbf D_\mu F$, as well as the lower order terms. This forces us to consider the commutators of several vector fields with the Dirac operator. Remarkably, the spinor structure demands one to take into account the spinor-adapted commutator, which does not appear in the analysis of scalar fields. Indeed, the commutator $[\gamma^\mu\partial_\mu,L_i]$ with the Lorentz boost $L_i$ does not vanish even in the flat spacetime, and the Lorentz boost must be modified as $\widehat{L_i}=L_i+\frac12\gamma_0\gamma_i$ and one easily sees $[\gamma^\mu\partial_\mu,\widehat{L_i}]=0$ in the flat setting. It is obvious that $[\gamma^\mu\mathbf D_\mu, \widehat{L_i}]\neq0$ on a curved background, which requires a delicate analysis. Nevertheless, we observe that all the commutators turn out to be acceptable error terms, and estimates of the nonlinear terms $\gamma^\mu\mathbf D_\mu F$ will be reduced to estimates of scalar fields. See Proposition \ref{prop-dirac-reduce}.
\subsubsection*{ $\bullet$ The Bootstrap argument}
We fix an integer $N\ge11$. Let the initial data $\psi_0,\phi_0,\phi_1$ with the smallness condition: $\|\psi_0\|_{H^N(\mathbb R^3)}+\|\phi_0\|_{H^N(\mathbb R^3)}+\|\phi_1\|_{H^{N-1}(\mathbb R^3)} \le \varepsilon$ be given, with $0<\varepsilon<\epsilon_0$. The discussion of Section 11 of \cite{flochma1} implies that one can establish local well-posedness for the aforementioned nonlinear wave-type equation derived by squaring the Dirac operator, and the time of existence $T=T(\varepsilon,g)$ of the local solution tends to infinity as the size of the initial data approaches zero. One has to note that the time $T(\varepsilon,g)$ depends on the metric $g$ as well as the size of the data, due to the presence of the lower order terms $V\psi$ and $\Gamma^\mu\partial_\mu\psi$. This local well-posedness result ensures that on the initial hyperboloid $\Sigma_{\tau_0}$ with $\tau_0=2$, for all $|I|+|J|\le N$,
\begin{align*}
	E_M[\partial^IL^J\psi](\tau_0)^\frac12 \le C_0\varepsilon, \quad 	E_m[\partial^IL^J\phi](\tau_0)^\frac12 \le C_0\varepsilon,
\end{align*}
with an absolute constant $C_0$. Then we assume that on some hyperbolic time interval $[\tau_0,\tau_1]$, the following energy bounds hold for some constants $C,\delta,\varepsilon$:
\begin{align*}
	E_M[\partial^IL^J\psi](\tau)^\frac12 \le C\varepsilon \tau^{\frac12+(|I'|+|J|)\delta}, \quad N-3\le |I|+|J|\le N, \\
	E_M[\partial^IL^J\psi](\tau)^\frac12 \le C\varepsilon \tau^{(|I'|+|J|)\delta}, \quad  |I|+|J|\le N-4, \\
	E_m[\partial^IL^J\phi](\tau)^\frac12 \le C\varepsilon \tau^{\frac12+(|I'|+|J|)\delta}, \quad N-3\le |I|+|J|\le N, \\
	E_m[\partial^IL^J\phi](\tau)^\frac12 \le C\varepsilon \tau^{(|I'|+|J|)\delta}, \quad  |I|+|J|\le N-4,
\end{align*}
where we put $\partial^I=\partial_t^{I'}\partial_x^{I''}$. Note that the energy growth depends on the number of derivatives $\partial_t$ and the Lorentz boost $L$. Then, the main goal of the bootstrap argument is to improve the aforementioned assumptions as follows: for sufficiently small $\varepsilon$ and some $C>4C_0$ and $\frac1{10N}\le\delta\le \frac1{5N}$,
\begin{align*}
	E_M[\partial^IL^J\psi](\tau)^\frac12 \le \frac12C\varepsilon \tau^{\frac12+(|I'|+|J|)\delta}, \quad N-3\le |I|+|J|\le N, \\
	E_M[\partial^IL^J\psi](\tau)^\frac12 \le \frac12C\varepsilon \tau^{(|I'|+|J|)\delta}, \quad  |I|+|J|\le N-4, \\
	E_m[\partial^IL^J\phi](\tau)^\frac12 \le \frac12C\varepsilon \tau^{\frac12+(|I'|+|J|)\delta}, \quad N-3\le |I|+|J|\le N, \\
	E_m[\partial^IL^J\phi](\tau)^\frac12 \le \frac12C\varepsilon \tau^{(|I'|+|J|)\delta}, \quad  |I|+|J|\le N-4,
\end{align*}
which implies that $\tau_1=+\infty$ and we have global solutions $\psi$ and $(\psi,\phi)$ to the systems. To close the bootstrap argument, by energy inequality Proposition \ref{energy-ineq}, we have to show that
\begin{align}\label{bootest-1}
    \begin{aligned}
        \int_{\tau_0}^\tau \|[\Box_g,\partial^IL^J]\phi\|_{L^2(\Sigma_{\tau'})}\,d\tau' \le \frac12 C\varepsilon \tau^{\frac12+(|I'|+|J|)\delta}, \quad N-3\le |I|+|J|\le N, \\
          \int_{\tau_0}^\tau \|[\Box_g,\partial^IL^J]\phi\|_{L^2(\Sigma_{\tau'})}\,d\tau' \le \frac12 C\varepsilon \tau^{(|I'|+|J|)\delta}, \quad  |I|+|J|\le N-4, \\
            \int_{\tau_0}^\tau \|\partial^IL^J\Gamma^\mu\partial_\mu\psi\|_{L^2(\Sigma_{\tau'})}\,d\tau' \le \frac12 C\varepsilon \tau^{\frac12+(|I'|+|J|)\delta}, \quad N-3\le |I|+|J|\le N, \\
             \int_{\tau_0}^\tau \|\partial^IL^J\Gamma^\mu\partial_\mu\psi\|_{L^2(\Sigma_{\tau'})}\,d\tau' \le \frac12 C\varepsilon \tau^{(|I'|+|J|)\delta}, \quad |I|+|J|\le N-4, \\
 \int_{\tau_0}^\tau \|\partial^IL^JV\psi\|_{L^2(\Sigma_{\tau'})}\,d\tau' \le \frac12 C\varepsilon \tau^{\frac12+(|I'|+|J|)\delta}, \quad N-3\le |I|+|J|\le N, \\
  \int_{\tau_0}^\tau \|\partial^IL^JV\psi\|_{L^2(\Sigma_{\tau'})}\,d\tau' \le \frac12 C\varepsilon \tau^{(|I'|+|J|)\delta}, \quad  |I|+|J|\le N-4,
    \end{aligned}
\end{align}
and for the cubic problem we prove
\begin{align}\label{bootest-2}
    \begin{aligned}
          \int_{\tau_0}^\tau \|\partial^IL^J\gamma^\mu\mathbf D_\mu[(\psi^\dagger\gamma^0\psi)\psi]\|_{L^2(\Sigma_{\tau'})}\,d\tau' \le \frac12 C\varepsilon \tau^{\frac12+(|I'|+|J|)\delta}, \quad N-3\le |I|+|J|\le N, \\
    \int_{\tau_0}^\tau \|\partial^IL^J\gamma^\mu\mathbf D_\mu[(\psi^\dagger\gamma^0\psi)\psi]\|_{L^2(\Sigma_{\tau'})}\,d\tau' \le \frac12 C\varepsilon \tau^{(|I'|+|J|)\delta}, \quad  |I|+|J|\le N-4,
    \end{aligned}
\end{align}
and for the (DKG) system we prove
\begin{align}\label{bootest-3}
    \begin{aligned}
          \int_{\tau_0}^\tau \|\partial^IL^J\gamma^\mu\mathbf D_\mu[\phi\psi]\|_{L^2(\Sigma_{\tau'})}\,d\tau' \le \frac12 C\varepsilon \tau^{\frac12+(|I'|+|J|)\delta}, \quad N-3\le |I|+|J|\le N, \\
    \int_{\tau_0}^\tau \|\partial^IL^J\gamma^\mu\mathbf D_\mu[\phi\psi]\|_{L^2(\Sigma_{\tau'})}\,d\tau' \le \frac12 C\varepsilon \tau^{(|I'|+|J|)\delta}, \quad  |I|+|J|\le N-4.
    \end{aligned}
\end{align}
In the estimates of the lower-order terms \eqref{bootest-1}, one has to carefully deal with the case when the Lorentz boost $L$ does not act on the field $\phi,\psi$ and yields an additional $t$-factor, which becomes problematic in our energy method. However, this can be remedied via the Hardy-type inequality on the hyperboloid (see Lemma \ref{hardy-ineq}) and the specific structure of the energy: $\|\partial_j\psi+\frac{x^j}{t}\partial_t\psi\|_{L^2(\Sigma_\tau)}\le E_c[\psi](\tau)^\frac12$ and $\|\partial_t\psi\|_{L^2(\Sigma_\tau)}\le \frac{t}{\tau}E_c[\psi](\tau)^\frac12$ for any $c>0$. This is one of the main concerns of Section \ref{sec:commutators}.

In the proof of \eqref{bootest-2} and \eqref{bootest-3}, we further reduce the estimates of the nonlinear terms $\|\partial^IL^J\gamma^\mu\mathbf D_\mu F\|_{L^2(\Sigma_\tau)}$ involving the Dirac operator to simply $\|\partial_\mu\partial^IL^J F\|_{L^2(\Sigma_\tau)}$ with $\mu=0,1,2,3$. In other words, one can consider the spin-nonlinear estimates to be merely scalar-nonlinear estimates. This is also one of the main concern of Section \ref{sec:commutators} and we refer to Proposition \ref{prop-dirac-reduce}.
\subsection{Non-stationary metric} It is a natural question of whether the stationarity assumption can be relaxed and one can still obtain the global existence on a non-stationary spacetime, which is sufficiently close to the Minkowski space. This is the case when the metric satisfies a mild decay assumption for the time derivatives. Instead of the metric $g$ with \eqref{metric-decay}, we consider a Lorentzian metric $g$ such that
\begin{enumerate}
    \item $t=const.$ is space-like.
    \item $g$ is decomposed into $g=m+g_{sr}+g_{lr}$,
    where $g_{lr}$ is a stationary long range spherically symmetric component and $g_{sr}$ is a short range, not necessarily stationary component, such that
    \begin{align}\label{nonstationary}
        \begin{aligned}
             |\partial_x^\alpha g_{lr}|&\le c^{lr}_\alpha \langle r\rangle^{-1-|\alpha|}, \quad |\alpha|\le N+2, \\
         |\partial_t^\beta\partial_x^\alpha g_{sr}|&\le c^{sr}_{\alpha,\beta} \langle r+t\rangle^{-1-\eta}\langle r\rangle^{-(1-\eta+|\alpha|+\beta)}, \quad |\alpha|+\beta\le N+2,
        \end{aligned}
    \end{align}
    for some $0<\eta<1$.
\end{enumerate}
Under the assumption \eqref{nonstationary}, the global existence and sharp pointwise decay estimates for the cubic Dirac and the (DKG) systems can be established in essentially the same manner as the stationary metric \eqref{metric-decay}. In fact, all key analytic ingredients, such as the energy estimates and commutator bounds remain valid. The necessary modification of the proof is straightforward, and the assumption \eqref{nonstationary} does not affect the essential structure of the argument.

\subsection{Previous and Related works}
On the Minkowski spacetime, nonlinear problems for the Dirac equation have seen tremendous progress. Cubic Dirac equation, given by
\begin{align}\label{cubic-flat}
    \begin{aligned}
        -i\gamma^\mu\partial_\mu\psi+M\psi &= (\psi^\dagger\gamma^0\Gamma\psi)\Gamma\psi,
    \end{aligned}
\end{align}
is the representative toy model, describing the self-interaction of a particle, introduced in \cite{soler,thirring}.
Escobedo - Vega \cite{escobedovega} studied local well-posedness for semilinear Dirac equation of the form \eqref{cubic-flat}. Machihara - Nakanishi - Ozawa \cite{machihara} established global well-posedness and scattering for the massive Dirac, with initial data $H^s(\mathbb R^3)$, $s>1$. When $\Gamma=I_4$, the cubic nonlinearity possesses the null structure. This null form was exploited by Bejenaru - Herr \cite{behe,behe1} to obtain the scattering result for the critical Sobolev data $H^1(\mathbb R^3)$ and $H^\frac12(\mathbb R^2)$ with massive case. Then the massless case is complemented by Bournaveas - Candy \cite{boucan}.

The Dirac field can be coupled with various field. For example, the coupling of the Dirac field with a scalar field results in the Dirac-Klein-Gordon (DKG) system:
\begin{align}
    \begin{aligned}
        -i\gamma^\mu\partial_\mu\psi +M\psi &= \phi\psi, \\
        (\Box-m^2)\phi &= \psi^\dagger\gamma^0\psi.
    \end{aligned}
\end{align}
Ancona - Foschi - Selberg \cite{anfoselb} exploited the null structure in the system and showed almost optimal local well-posedness. This result was improved to global well-posedness with the same regularity by Bejenaru - Herr \cite{beherr}. Then Wang \cite{wang} apply additional regularity in the angular variables to improve those result and to obtain the critical regularity well-posedness. Then Candy - Herr \cite{canhe2} established the scattering result for the system at the critical Sobolev regularity with only a small amount of angular regularity, and then they obtained conditional large data scattering \cite{canhe1,canhe}.

When the Dirac field is coupled with an abelian gauge field $A_\mu$, one has the Maxwell-Dirac (MD) system:
\begin{align}
    \begin{aligned}
        -i\gamma^\mu\partial_\mu\psi+m\psi = A_\mu\gamma^0\gamma^\mu\psi, \\
        \partial^\nu F_{\mu\nu} = -\psi^\dagger\gamma^0\gamma_\mu\psi,
    \end{aligned}
\end{align}
where $F_{\mu\nu}=\partial_\mu A_\nu-\partial_\nu A_\mu$ is the curvature $2$-form associated to the gauge field $A_\mu$. This system is more subtle, since the equation is dependent on the gauge choice, such as the Coulomb gauge $\partial^jA_j=0$ and the Lorenz gauge $\partial^\mu A_\mu=0$. The Coulomb gauge results in a mix of hyperbolic and elliptic equations. Gavrus and Oh \cite{gavrusoh} proved global well-posedness and scattering for the $(1+4)$-dimensional (MD) equation under the Coulomb gauge. Concerning the Lorenz gauge, which leads one to the Lorenz-invariant hyperbolic equations, Ancona - Foschi - Selberg \cite{anfoselb1} proved almost optimal local well-posedness for the $(1+3)$-dimensional (MD) system. Global well-posedness for the $(1+2)$-dimensional system was also obtained in \cite{anselb}. Then modified scattering for the $(1+4)$-dimensional (MD) was proved by Lee \cite{klee}. More recently, Herr - Ifrim - Spitz \cite{herrifrim}, and Cho - Lee \cite{choklee} proved modified scattering for the $(1+3)$-dimensional MD system.

One can also formulate the cubic Dirac equation with the Hartree-type nonlinearity by decoupling the Dirac-Klein-Gordon \cite{chagla} and the Maxwell-Dirac systems \cite{chagla1} as follows:
\begin{align}
    -i\gamma^\mu\partial_\mu\psi+m\psi = V_b*(\psi^\dagger\gamma^0\psi)\psi,
\end{align}
where $V_b(x)=\frac{e^{-b|x|}}{|x|}$, with $b\ge0$. For $b>0$ small scattering result was obtained by Tesfahun \cite{tes,tes1} and Yang \cite{cyang}. Then it was improved in \cite{choleeoz,chohonglee,chohongoz}. For $b=0$ modified scattering result was proven in \cite{cloos} and \cite{chokwonleeyang}. Global large data solutions for the 2D system was obtained in \cite{geosha}. We also refer to related problems concerning the honeycomb structure \cite{klee1}, and semi-relativistic equations (or Boson star equations) \cite{lenz,herrlenz,herrtes,pusa,kwonleeyang}.

We also refer the readers to the work by Huh - Oh \cite{huhoh}, Okamoto \cite{okamoto}, Bournaveas - Candy - Machihara \cite{boucanma}, Pecher \cite{pecher}, proving local well-posedness theory for the Chern-Simons-Dirac system.

Concerning the Dirac equation on a non-trivial geometry, we refer the readers to \cite{parker} as an expository literature. Recently, spectral properties of the Dirac operator were studied by B\"ar \cite{bar}. Finster - Kamran - Smoller - Yau \cite{finster1} investigated the long-time dynamics of of the Dirac equation on the Kerr-Newmann blackhole background, and they obtained sharp decay rates for the massive Dirac on this setting \cite{finster}. The decay rates were also obtained for the massless Dirac on the Schwarzschild background by Smoller - Xie \cite{smollerxie}. This was later improved by Ma - Zhang \cite{mazhang}, which shows the sharp Price's law for the massless Dirac on the Schwarzschild black hole background. Regarding the scattering problems, Nicolas \cite{nicolas} proved scattering for the linear Dirac fields on a spherically symmetric black hole background. Then scattering of massive Dirac on the Schwarzschild background was proved by Jin \cite{jin}. On the Kerr background, scattering was obtained by H\"afner and Nicolas for the massless Dirac \cite{hanicolas}, and by Batic for the massive case \cite{batic}. More recently, conformal scattering results were obtained for the Dirac inside a Reissner-Nordstr\"om-type black hole background by H\"afner, Nicolas, Mokdad \cite{hanimo}, and on the Kerr by Pham \cite{pham}.

 In the context of the dispersive PDEs, the Strichartz estimates \cite{keeltao,strichartz} play a crucial role. We also refer to \cite{ginibrevelo} for an expository literature. Cacciafesta and Suzzoni \cite{cacciasu1} established dispersive inequality on an asymptotically flat spacetime. Then local-in-time Strichartz estimates for the Dirac equation on spherically symmetric space was obtained \cite{cacciasu}, and this was later improved to global-in-time estimates \cite{artzcaccia}. The Strichartz estimates for the Dirac equation were proved on an asymptotically flat spacetime with the non-trapping condition \cite{caccia1}, and on compact manifolds without boundary \cite{caccia2}. More recently, Herr and the present author \cite{herrh} investigated the endpoint Strichartz estimates for the half-Klein-Gordon equation on weakly asymptotically flat spacetime via an outgoing parametrix construction introduced by \cite{tataru4,metataru} and established global well-posedness and scattering for the cubic Dirac equation, covering the whole subcritical regime, $H^s(\mathbb R^3)$, $s>1$.

Concerning nonlinear problems, global existence for the Dirac fields on the de Sitter spacetime with the nonlinearity satisfying the Lipschitz condition was proven by Yagdjian \cite{yag}, and similar results in the FLRW spacetime were known by \cite{galyag}. They also obtained a fundamental solution for the Dirac equation \cite{galyag1}.  Global existence for the massless Maxwell-Dirac system on an asymptotically flat spacetime was established by Ginoux - M\"uller \cite{gimu}. More recently, global stability of the Einstein-Dirac system in the Minkowski solution has been proved by Chen for the massless case \cite{chen}.

Meanwhile, the study of the wave and Klein-Gordon equations has seen remarkable progress over the past decades, both in the flat and curved settings, including global existence, scattering, and sharp decay estimates. 
Rather than providing an exhaustive list, we refer the readers to a few representative works, including \cite{klai,christodoulou,christoklai,hoermander,metato,daferrod1,daferrod,dahorod,dahorod1}, and references therein.
  

\subsection{Toy model: nonlinear Klein-Gordon equations} We would like to emphasise that the key argument of Theorem \ref{thm-global-cubic} and Theorem \ref{thm-global-dkg} is mainly derived from the well-known argument concerning scalar field equations, and it can be directly applied to several nonlinear problems for the scalar field equations. Here we present cubic and quadratic nonlinear Klein-Gordon equations on an asymptotically flat spacetime, close to the Minkowski spacetime:
\begin{align}\label{toy-cubic-kg}
\begin{aligned}
	(\Box_g-1)\phi = |\phi|^2\phi, \\
	(\phi,\partial_t\phi)|_{t=t_0} := (\phi_0,\phi_1),
\end{aligned}	
\end{align}
and
\begin{align}\label{toy-quadra-kg}
\begin{aligned}
	(\Box_g-1)\phi = \phi\nabla\phi, \\
	(\phi,\partial_t\phi)|_{t=t_0} := (\phi_0,\phi_1),
\end{aligned}	
\end{align}
where $\nabla=\nabla_{t,x}$ is the space-time derivative.
By repeating the argument as the proof of Theorem \ref{thm-global-cubic} and Theorem \ref{thm-global-dkg}, one is able to establish global existence and pointwise decay of the solutions to Cauchy problems \eqref{toy-cubic-kg} and \eqref{toy-quadra-kg}, respectively. Moreover, the quadratic term with at most first-order derivative does not require any specific null condition. As a toy model problem, we present the proof in the Appendix.
\subsection*{Organisation} The rest of this paper is organised as follows. We end this section with the notations, which will be used throughout this paper. Section \ref{sec:dirac-operator} is devoted to the introduction of the Dirac operator. The key of this section is Proposition \ref{prop-squaring-dirac}, which transfers the problem of the Dirac equations to partially the problem of the Klein-Gordon. In Section \ref{sec:foliation} we briefly review the hyperboloidal foliation method, which has been systematically introduced in the works by LeFloch - Ma. Section \ref{sec:commutators} concerns the commutators with the Laplace-Beltrami operator $\Box_g$ and the Dirac operator $\gamma^\mu\mathbf D_\mu$. Finally we prove Theorem \ref{thm-global-cubic} and Theorem \ref{thm-global-dkg} in Section \ref{sec:main-proof} via the bootstrap argument. Appendix is devoted to the detailed discussion of the derivation of the energy inequality on the hyperboloid and the proof of global existence of cubic and quadratic nonlinear Klein-Gordon equations, as a toy model.
\subsection*{Notations}
We first introduce the basic notations on the Cartesian coordinates. We refer the readers to \cite{alinhac}.
We write $(t,x):= (x^0,x^1,x^2,x^3)$ and $r:=|x|=\sqrt{(x^1)^2+(x^2)^2+(x^3)^2}$. We also use the bracket notation $\langle x\rangle=\sqrt{1+|x|^2}$. Furthermore, we denote the partial derivative $\frac{\partial}{\partial x^\mu}$, $\mu=0,1,2,3$ simply by $\partial_\mu$.
Since we are only concerned with an asymptotically flat spacetime, which is close to the Minkowski spacetime throughout this paper, it is still reasonable to consider the coordinate $x^0=t$ as the time, and the coordinates $(x^1,x^2,x^3)$ as the spatial coordinates.

Vector fields $V,W,\cdots$ defined on $\mathbb R^{1+3}$ are denoted by $V=V^\mu\partial_\mu$ and $W=W^\nu\partial_\nu$, while $1$-forms $\omega,\eta,\cdots$ are written by $\omega=\omega_\mu dx^\mu$ and $\eta=\eta_\nu dx^\nu$ in terms of the Cartesian coordinates.

A metric $g$ is the smooth assignment to each point $x$ of a symmetric non-degenerate bilinear form on the tangent space $T_x\mathbb R^{1+3}$. In the coordinates $x^\mu$, the components of the metric are $g_{\mu\nu}=g(\partial_\mu,\partial_\nu)$, which are supposed to be smooth. Hence the metric $g$ can be identified with the symmetric $4\times4$ invertible matrix $[g_{\mu\nu}]$. The elements of the inverse metric are denoted by $g^{\mu\nu}$. Throughout this paper, the signature of the quadratic form $g$ is assumed to be $(-1,+1,+1,+1)$, which means that the metric $g$ is Lorentzian.

We adopt the Einstein summation convention, i.e., repeated indices implicitly denote summation. From now on the Greek indices $\mu,\nu,\lambda$ range over $0,1,2,3$, whereas the Latin indices $i,j,k$ range over $1,2,3$. For example, by the notation $\gamma^\mu\mathbf D_\mu$ we mean the summation $\sum_{\mu=0}^3\gamma^\mu\mathbf D_\mu$ and $\gamma^j\mathbf D_j = \sum_{j=1}^3 \gamma^k \mathbf D_k$.

In what follows, we will raise and lower the index by using the metric tensor $g$. For example, we write $x_\mu = g_{\mu\nu}x^\nu$ and $\partial^\mu\phi = g^{\mu\nu}\partial_\nu\phi$. Using the repeated indices, the metric $g$ is also written as
\begin{align*}
    g = g_{\mu\nu}dx^\mu dx^\nu , \quad g(V,W) = g_{\mu\nu}V^\mu W^\nu = V_\mu W^\mu = V^\nu W_\nu.
\end{align*}
We denote the Lorentz boost by $L_i = t\partial_i+x^i\partial_t$, $i=1,2,3$. Now given any multi-index $I=(\mu_n,\mu_{n-1},\cdots,\mu_1)$ we denote by $\partial^I = \partial_{\mu_n}\partial_{\mu_{n-1}}\cdots\partial_{\mu_1}$ the product of $n=|I|$ partial derivatives with $0\le \mu_i \le 3$. Similarly, for any multi-index $J=(j_m,j_{m-1},\cdots,j_1)$ we denote by $L^J=L_{j_m}\cdots L_{j_1}$, with $1\le j_{k}\le 3$, the product of $m=|J|$ Lorentz boosts.

For two positive numbers $A$ and $B$, we write $A\lesssim B$ if $A\le CB$, for some absolute constant $C$ (which only depends on irrelevant parameters). If $C$ can be chosen sufficiently small, we write $A\ll B$.
Also, we write $A\approx B$ if both $A\lesssim B$ and $B\lesssim A$.

Given a complex number $z\in \mathbb C$ we denote its real part by $\Re z$.
\section{Dirac operators}\label{sec:dirac-operator}
The homogeneous covariant Dirac equation is given by
\begin{align}\label{eq-dirac-cov}
	(-i\gamma^\mu(x)\mathbf D_\mu+M)\psi = 0,
\end{align}
where $\psi:\mathbb R^{1+3}\to\mathbb C^4$ is the unknown spinor field and $M>0$ is a mass of the particle.
On a generally curved spacetime, the gamma matrices $\gamma^\mu$, $\mu=0,\cdots,3$ are no longer constant matrices. Instead, the matrices $\gamma^\mu=\gamma^\mu(x)$ also become space(-time) dependent complex matrices in a stationary (a non-stationary) metric. The algebraic property is then generalised to 
\begin{align}\label{gamma-alge-curv}
\gamma^\mu\gamma^\nu+\gamma^\nu\gamma^\mu = -2g^{\mu\nu}(x) I_{4}.	
\end{align}
We define the covariant derivative acting on a spinor field $\psi:\mathbb R^{1+3}\rightarrow\mathbb C^4$ to be
\begin{align}
\mathbf D_\mu \psi = (\partial_\mu-\Gamma_\mu)\psi,	
\end{align}
where the spinorial affine connections $\Gamma_\mu=\Gamma_\mu(x)$ are matrices defined by the vanishing of the covariant derivative of the gamma matrices:
\begin{align}\label{affine-spin}
	\mathbf D_\mu \gamma_\nu = \partial_\mu\gamma_\nu-\Gamma^\lambda_{\mu\nu}\gamma_\lambda-\Gamma_\mu\gamma_\nu+\gamma_\nu\Gamma_\mu = 0.
\end{align}
One can represent the $\gamma^\mu(x)$ matrices in terms of the gamma matrices $\tilde{\gamma}^\mu$ on the flat spacetime by introducing a \textit{vierbein} $b^\alpha_\mu(x)$ of vector fields, defined by
\begin{align}\label{metric-vierbein}
	g_{\mu\nu}(x) = m_{\alpha\beta}b^\alpha_\mu(x)b^\beta_\nu(x).
\end{align}
Then the matrices $\gamma^\mu(t,x)$ can be written in the form
\begin{align}
\gamma^\mu(x) = b^\mu_\alpha(x)\tilde{\gamma}^\alpha.	
\end{align}
Here we used the notation $\gamma^\mu(x)$ and $\tilde\gamma^\mu$ to distinguish the gamma matrices dependent on space-time from the constant matrices.
Now a set of $\Gamma_\mu(x)$ satisfying \eqref{affine-spin} are given by
\begin{align}\label{formula-Gamma}
\Gamma_\mu(x) = -\frac14 \tilde{\gamma}_\alpha\tilde{\gamma}_\beta b^{\alpha\lambda}(x)\mathbf D_\mu b^\beta_\lambda(x),	
\end{align}
where
\begin{align*}
	\mathbf D_\mu b^\beta_\lambda = \partial_\mu b^\beta_\lambda - \Gamma^\sigma_{\mu\lambda}b^\beta_\sigma.
\end{align*}
We refer the readers to \cite[Section 3.9]{parker} for the details. However, concerning the spinorial affine connections $\Gamma_\mu$, we only need the inequalities $|\Gamma_\mu(x)|\lesssim |\partial_{x}g_{}|$. See also Proposition 2.1 of \cite{caccia1}. Note that $\Gamma_\mu$ behaves as $1$-forms. This will be exploited in the proof of Proposition \ref{prop-squaring-dirac}. 

Since the Dirac equation was first developed via the factorisation of the Klein-Gordon equation on the Minkowski spacetime, squaring the Dirac operator leads us directly to the Klein-Gordon equation for each of the spinor components. Obviously, this is not the case for the Dirac equation on a curved spacetime. However, one can still obtain a generalisation of the Klein-Gordon equation up to spinorial connections. More precisely, we have the following: (See also \cite{alcu} and \cite{parker}.)
\begin{prop}\label{prop-squaring-dirac}
Given an inhomogeneous Dirac equation $-i\gamma^\mu\mathbf D_\mu\psi+M\psi = F$, we have
\begin{align}
(\Box_g-M^2)\psi = -2\Gamma^\mu\partial_\mu\psi +V\psi -i\gamma^\mu\mathbf D_\mu F-MF,
\end{align}
where $\Box_g$ is the Laplace-Beltrami operator associated to the metric $g$ and the potential $V$ satisfies the inequality $|V(x)|\lesssim |\partial^2 g|+|\partial g|^2$.
\end{prop}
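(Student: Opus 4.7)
The plan is to square the Dirac operator: apply the ``conjugate'' operator $-i\gamma^\nu \mathbf D_\nu - M$ to both sides of $(-i\gamma^\mu \mathbf D_\mu + M)\psi = F$. Since $\mu,\nu$ are dummy indices, the cross terms $\pm iM\gamma^\mu \mathbf D_\mu\psi$ cancel on the left, leaving $-\gamma^\nu \mathbf D_\nu(\gamma^\mu \mathbf D_\mu \psi) - M^2 \psi$, while on the right we obtain exactly $-i\gamma^\mu \mathbf D_\mu F - MF$. It therefore remains to identify $\gamma^\nu \mathbf D_\nu(\gamma^\mu \mathbf D_\mu \psi)$ with $-\Box_g \psi + 2\Gamma^\mu \partial_\mu \psi + \bigl(\nabla^\mu \Gamma_\mu + \Gamma^\mu\Gamma_\mu + \tfrac{\mathbf R}{4}\bigr)\psi$, which is the purely spinorial Lichnerowicz-type identity.

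This identification proceeds in two pieces. First, rewrite the covariant constancy \eqref{affine-spin} in the form $\partial_\nu \gamma^\mu = -\Gamma^\mu_{\nu\lambda}\gamma^\lambda + [\Gamma_\nu, \gamma^\mu]$, and run a direct Leibniz expansion of $\mathbf D_\nu(\gamma^\mu \mathbf D_\mu \psi) = (\partial_\nu - \Gamma_\nu)(\gamma^\mu \mathbf D_\mu \psi)$. The commutator contribution $[\Gamma_\nu, \gamma^\mu]$ arising from $\partial_\nu \gamma^\mu$ is cancelled by the one produced in moving $\Gamma_\nu$ past $\gamma^\mu$, leaving the clean identity
\begin{equation*}
\mathbf D_\nu(\gamma^\mu \mathbf D_\mu \psi) = \gamma^\mu \mathbf D_\nu \mathbf D_\mu \psi - \Gamma^\mu_{\nu\lambda}\gamma^\lambda \mathbf D_\mu \psi.
\end{equation*}
Contracting with $\gamma^\nu$ and splitting $\gamma^\nu\gamma^\mu = -g^{\nu\mu}I_4 + \tfrac12[\gamma^\nu, \gamma^\mu]$, the symmetric half, after expanding $\mathbf D_\nu \mathbf D_\mu = (\partial_\nu - \Gamma_\nu)(\partial_\mu - \Gamma_\mu)$ and absorbing the Christoffel correction $g^{\nu\lambda}\Gamma^\mu_{\nu\lambda}\partial_\mu \psi$ into $g^{\mu\nu}\partial_\mu \partial_\nu \psi$, reassembles into the second-order term $-\Box_g\psi$, the first-order term $2\Gamma^\mu \partial_\mu \psi$, and the zero-order term $\nabla^\mu \Gamma_\mu + \Gamma^\mu \Gamma_\mu$; this last assembly uses the $1$-form identity $\nabla^\mu \Gamma_\mu = g^{\mu\nu}\partial_\mu \Gamma_\nu - g^{\nu\lambda}\Gamma^\mu_{\nu\lambda}\Gamma_\mu$, valid precisely because, as the excerpt's footnote emphasises, $\Gamma_\mu$ transforms as a $1$-form. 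The antisymmetric half $\tfrac12\gamma^{[\nu}\gamma^{\mu]}[\mathbf D_\nu, \mathbf D_\mu]\psi$ is the classical Lichnerowicz piece: the spin curvature $[\mathbf D_\nu, \mathbf D_\mu]$ is proportional to $R_{\nu\mu\alpha\beta}\gamma^\alpha\gamma^\beta$, and the first Bianchi identity combined with the anticommutation relation reduces the contraction $\gamma^\nu\gamma^\mu\gamma^\alpha\gamma^\beta R_{\nu\mu\alpha\beta}$ to a scalar multiple of $\mathbf R \cdot I_4$, producing the $\tfrac{\mathbf R}{4}\psi$ contribution. Setting $V := -\nabla^\mu \Gamma_\mu - \Gamma^\mu \Gamma_\mu - \tfrac{\mathbf R}{4}$ then gives the stated identity.

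Finally, the pointwise bound on $V$ is a standard consequence of the explicit formula \eqref{formula-Gamma}: it yields $|\Gamma_\mu| \lesssim |\partial g|$ and $|\partial \Gamma_\mu| \lesssim |\partial^2 g| + |\partial g|^2$ (see also Proposition 2.1 of \cite{caccia1}), whence $|\Gamma^\mu \Gamma_\mu| \lesssim |\partial g|^2$ and $|\nabla^\mu \Gamma_\mu| \lesssim |\partial^2 g| + |\partial g|^2$; combined with the standard coordinate expression $|\mathbf R| \lesssim |\partial^2 g| + |\partial g|^2$ for the scalar curvature, this gives $|V| \lesssim |\partial^2 g| + |\partial g|^2$ as required. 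I expect the main technical care in the derivation to lie in the Clifford-algebraic reduction of the antisymmetric piece to $\tfrac{\mathbf R}{4}$, namely in verifying via the first Bianchi identity that $\gamma^\nu\gamma^\mu\gamma^\alpha\gamma^\beta R_{\nu\mu\alpha\beta}$ collapses to a scalar multiple of the identity; once this reduction is secured, the remaining computations are essentially mechanical Leibniz manipulations, made tractable by the fact that $\Gamma_\mu$ transforms as a $1$-form and so does not produce additional non-scalar lower-order contributions.
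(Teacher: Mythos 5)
Your proposal follows essentially the same route as the paper's own proof: square the Dirac operator, split $\gamma^\nu\gamma^\mu$ into symmetric and antisymmetric parts via the Clifford relation, identify the antisymmetric piece with $-\tfrac{\mathbf R}{4}\psi$ through the spin-curvature Lichnerowicz identity, and re-expand the symmetric piece (the spinorial Bochner-type term $-g^{\mu\nu}\mathbf D_\mu\mathbf D_\nu\psi$) into the scalar $\Box_g\psi$ plus the lower-order spin-connection terms $\Gamma^\mu\partial_\mu\psi$, $\nabla^\mu\Gamma_\mu\psi$, $\Gamma^\mu\Gamma_\mu\psi$. Your two minor deviations — applying the conjugate operator $-i\gamma^\nu\mathbf D_\nu-M$ so that the cross terms in $M$ cancel automatically (the paper applies $-i\gamma^\mu\mathbf D_\mu$ alone and then substitutes back the first-order equation), and keeping the Christoffel correction $\Gamma^\mu_{\nu\lambda}\gamma^\lambda\mathbf D_\mu\psi$ explicit rather than building it into $\mathbf D_\mu\mathbf D_\nu$ acting on the spinor-valued $1$-form — are bookkeeping choices that lead to the same result, and the residual sign ambiguities in the zero-order terms (which differ between your $V$ and the paper's) are immaterial since the only property of $V$ used downstream is the pointwise bound $|V|\lesssim|\partial^2 g|+|\partial g|^2$, which you also establish correctly.
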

\begin{proof}
To see this, we write
\begin{align*}
\gamma^\mu\mathbf D_\mu(\gamma^\nu\mathbf D_\nu \psi ) & = \gamma^\mu\gamma^\nu\mathbf D_\mu\mathbf D_\nu\psi  \\
& = \frac12( \gamma^\mu\gamma^\nu+\gamma^\nu\gamma^\mu+\gamma^\mu\gamma^\nu-\gamma^\nu\gamma^\mu)\mathbf D_\mu\mathbf D_\nu\psi,
\end{align*}
where we used the fact $\mathbf D_\mu\gamma^\nu=0$.
By the algebraic relation for the gamma matrices \eqref{gamma-alge-curv}, we have
\begin{align*}
	\frac12(\gamma^\mu\gamma^\nu+\gamma^\nu\gamma^\mu)\mathbf D_\mu\mathbf D_\nu \psi & = -g^{\mu\nu}\mathbf D_\nu\mathbf D_\nu\psi .
\end{align*}
On the other hand, we relabel the index to get
\begin{align*}
	\frac12(\gamma^\mu\gamma^\nu-\gamma^\nu\gamma^\mu)\mathbf D_\mu\mathbf D_\nu\psi & = \frac14\left( (\gamma^\mu\gamma^\nu-\gamma^\nu\gamma^\mu)\mathbf D_\mu\mathbf D_\nu+(\gamma^\nu\gamma^\mu-\gamma^\mu\gamma^\nu)\mathbf D_\nu\mathbf D_\nu \right)\psi \\
	& = \frac14 \left( \gamma^\mu\gamma^\nu(\mathbf D_\mu\mathbf D_\nu-\mathbf D_\nu\mathbf D_\mu )-\gamma^\nu\gamma^\mu(\mathbf D_\mu\mathbf D_\nu-\mathbf D_\nu\mathbf D_\mu) \right) \psi \\
	& = \frac14[\gamma^\mu,\gamma^\nu][\mathbf D_\mu,\mathbf D_\nu]\psi.
\end{align*}
Now we use the following identities:
\begin{align}
\begin{aligned}
   &[	\mathbf D_\mu,\mathbf D_\nu ] \psi = \frac14 \gamma^\lambda\gamma^\sigma R_{\mu\nu\lambda\sigma}\psi, \\
&\gamma^\mu\gamma^\nu\gamma^\lambda\gamma^\sigma R_{\mu\nu\lambda\sigma} = -2\mathbf R,
\end{aligned}	
\end{align}
where $\mathbf R$ is the scalar curvature associated to the metric $g$. We refer the readers to Section 5.6.1 of Parker. From these identites we conclude that
\begin{align*}
	\frac14[\gamma^\mu,\gamma^\nu][\mathbf D_\mu,\mathbf D_\nu]\psi & = -\frac14\mathbf R\psi.
\end{align*}
Finally we obtain
\begin{align*}
	\gamma^\mu\mathbf D_\mu(\gamma^\nu\mathbf D_\nu \psi )  = -g^{\mu\nu}\mathbf D_\mu\mathbf D_\nu\psi-\frac14\mathbf R\psi.
\end{align*}
Now we compute the term $\mathbf D_\nu\mathbf D_\nu\psi$. We first recall that $\mathbf D_\mu\psi$ is also a $1$-form. Then
\begin{align*}
	\mathbf D_\mu\mathbf D_\nu\psi & = \partial_\mu(\mathbf D_\nu\psi)-\Gamma_\mu(\mathbf D_\nu\psi)-\Gamma^\lambda_{\mu\nu}\mathbf D_\lambda\psi \\
	& = \partial_\mu(\partial_\nu\psi-\Gamma_\nu\psi)-\Gamma_\mu(\partial_\mu\psi-\Gamma_\nu\psi)-\Gamma^\lambda_{\mu\nu}(\partial_\lambda\psi-\Gamma_\lambda\psi) \\
	& = (\partial_\mu\partial_\nu\psi-\Gamma^\lambda_{\mu\nu}\partial_\lambda\psi)-\Gamma_\mu\partial_\nu\psi-\Gamma_\nu\partial_\mu\psi -(\partial_\mu\Gamma_\nu)\psi+\Gamma^\lambda_{\mu\nu}\Gamma_\lambda\psi +\Gamma_\mu\Gamma_\nu\psi \\
	& = \nabla_\mu\nabla_\nu\psi-(\Gamma_\mu\partial_\nu\psi+\Gamma_\nu\partial_\mu\psi)-(\nabla_\mu\Gamma_\nu-\Gamma_\mu\Gamma_\nu)\psi,
\end{align*}
where $\nabla_\mu$ is the usual covariant derivative that acts on $\phi$ as a scalar, and on $\Gamma_\mu$ as a $1$-form. See also \cite{alcu}. Therefore we see that
\begin{align*}
	-g^{\mu\nu}\mathbf D_\mu\mathbf D_\nu\psi & = -\nabla^\mu\nabla_\mu\psi-2\Gamma^\mu\partial_\mu\psi -(\nabla^\mu\Gamma_\mu-\Gamma^\mu\Gamma_\mu)\psi,
\end{align*}
where $\nabla^\mu\nabla_\mu$ is then the Laplace-Beltrami operator $\Box_g$ associated to the metric $g$. In consequence, an application of the operator $-i\gamma^\mu\mathbf D_\mu$ on the equation $-i\gamma^\nu\mathbf D_\nu\psi+M\psi = F$ yields the desired identity.
\end{proof}
\section{Hyperboloidal foliation method}\label{sec:foliation}
This section is devoted to the discussion on the hyperboloidal foliation method.
From now on we only consider $t\ge2$ inside the forward light cone $\{(t,x):|x|\le t-1\}$ and we assume that $\phi$ is supported inside the forward light cone, i.e., $\phi$ vanishes unless $\{|x|< t-1\}$. In other words, we focus on the solutions which are spatially compactly supported.

In what follows, we briefly review the foliation, energy estimates, and the Klainerman-Sobolev inequality, and Hardy-type estimates on the hyperboloid, introduced in \cite{flochma}. For interested readers, we present in the Appendix the background and the derivation of energy flux and energy estimates using a standard argument in details.

We introduce the foliation $\Sigma_\tau=\{(t,x): \tau = \sqrt{t^2-|x|^2}\}$. 
We define the energy on the hyperboloid:
\begin{align}
    E^{}[\phi](\tau) = \int_{\Sigma_\tau}\left( |\partial_t\phi|^2+|\partial_x\phi|^2+|\phi|^2+2\frac{x^j}{t}\Re(\partial_t\phi\overline{\partial_j\phi}) \right)\,dx,
\end{align}
or equivalently,
\begin{align}
    E^{}[\phi](\tau) = \int_{\Sigma_\tau} \sum_{j=1}^3 \left| \partial_j\phi+\frac{x^j}{t}\partial_t\phi \right|^2+\left| \frac{\tau}{t}\partial_t\phi \right|^2+|\phi|^2 \,dx.
\end{align}
We denote the integral of a function $\phi$ along the hyperboloid $\Sigma_\tau$ by
\begin{align*}
    \|\phi\|_{L^p(\Sigma_\tau)}^p := \int_{\Sigma_\tau}|\phi|^p\,dx = \int_{\mathbb R^3}|\phi(\sqrt{\tau^2+|x|^2},x)|^p\,dx.
\end{align*}
We also set the domain $D_{[\tau_0,\tau_1]}$ to be
\begin{align*}
    D_{[\tau_0,\tau_1]} := \bigcup_{\tau_0\le\tau\le\tau_1}\Sigma_{\tau} \,\cap \{(t,x): |x|<t-1\}. 
\end{align*}
\begin{prop}\label{energy-ineq}
For the solution $\phi$ to the inhomogeneous Klein-Gordon equation $(\Box_g-1)\phi =F$, which are spatially supported inside the forward light cone $\{(t,x): |x|<t-1\}$, we have the energy estimates:
    \begin{align}
         \sup_{\tau_0\le \tau'\le \tau}E^{}[\phi](\tau')^\frac12 \lesssim E^{}[\phi](\tau_0)^\frac12+\int_{\tau_0}^\tau \|F\|_{L^2(\Sigma_{\tau'})}\,d\tau'.
    \end{align}
\end{prop}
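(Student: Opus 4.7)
The plan is to apply the divergence-form multiplier identity with $\partial_t$, which is a Killing field for $g$ by stationarity, and then invoke the divergence theorem on the hyperboloidal slab $D_{[\tau_0,\tau]}$. Because $\phi$ is spatially compactly supported in $\{|x|<t-1\}$, the lateral light-cone boundary of $D_{[\tau_0,\tau]}$ contributes nothing, and only the fluxes through $\Sigma_{\tau_0}$ and $\Sigma_\tau$ survive.

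Concretely, for the Klein--Gordon stress-energy tensor
\begin{equation*}
T^g_{\mu\nu}[\phi]=\Re\bigl(\partial_\mu\phi\,\overline{\partial_\nu\phi}\bigr)-\tfrac12 g_{\mu\nu}\bigl(g^{\alpha\beta}\partial_\alpha\phi\,\overline{\partial_\beta\phi}+|\phi|^2\bigr),
\end{equation*}
the equation $(\Box_g-1)\phi=F$ gives $\nabla^\mu T^g_{\mu\nu}=\Re(\overline{\partial_\nu\phi}\,F)$, and the vanishing of the deformation tensor of $\partial_t$ promotes this to the exact divergence identity $\nabla^\mu\bigl(T^g_{\mu\nu}(\partial_t)^\nu\bigr)=\Re(\overline{\partial_t\phi}\,F)$ with no extra bulk error term. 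Integrating this over $D_{[\tau_0,\tau]}$ against $dV_g$ and applying the divergence theorem yields
\begin{equation*}
e_g[\phi](\tau)-e_g[\phi](\tau_0)=\int_{D_{[\tau_0,\tau]}}\Re\bigl(\overline{\partial_t\phi}\,F\bigr)\,dV_g,
\end{equation*}
where $e_g[\phi](\tau')$ denotes the natural $g$-geometric flux through $\Sigma_{\tau'}$.

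Next I would verify that $e_g[\phi](\tau')\approx E[\phi](\tau')$ uniformly in $\tau'$. Parametrising $\Sigma_{\tau'}$ by the graph $t=\sqrt{\tau'^2+|x|^2}$, the Minkowski flux integrand computes to $\tfrac12(|\partial_t\phi|^2+|\partial_x\phi|^2+|\phi|^2)+(x^j/t)\Re(\overline{\partial_t\phi}\,\partial_j\phi)$, which integrates in $x$ to $\tfrac12 E[\phi](\tau')$; the smallness condition \eqref{metric-decay} on $g-\mathbf m$ then guarantees that the corrections coming from the curved volume form, the curved normal, and the replacement of $\mathbf m^{\alpha\beta}$ by $g^{\alpha\beta}$ alter this quantity only by a factor uniformly close to one. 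For the bulk term, the coarea identity $dt\,dx=(\tau'/t)\,d\tau'\,dx$ combined with the structural inequality $\|(\tau'/t)\partial_t\phi\|_{L^2(\Sigma_{\tau'})}\le E[\phi](\tau')^{1/2}$ built into the definition of $E$ produces, after Cauchy--Schwarz,
\begin{equation*}
\Bigl|\int_{D_{[\tau_0,\tau]}}\Re\bigl(\overline{\partial_t\phi}\,F\bigr)\,dV_g\Bigr|\le C\int_{\tau_0}^{\tau}\|F\|_{L^2(\Sigma_{\tau'})}\,E[\phi](\tau')^{1/2}\,d\tau'.
\end{equation*}

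Combining the two inputs yields $E[\phi](\tau)\lesssim E[\phi](\tau_0)+\int_{\tau_0}^\tau\|F\|_{L^2(\Sigma_{\tau'})}E[\phi](\tau')^{1/2}\,d\tau'$, and the elementary implication $A^2\le B^2+2AC\Rightarrow A\le B+C$, applied with $A=\sup_{\tau_0\le\tau'\le\tau}E[\phi](\tau')^{1/2}$, closes the argument. I expect the main technical obstacle to be the comparison $e_g\approx E$ together with the precise identification of the volume form and the unit normal on $\Sigma_{\tau'}$ in the two metrics; once one correctly extracts the $(\tau/t)$ weight and dominates the cross terms arising from $g-\mathbf m$ by an $\epsilon_0$-multiple of $E[\phi]$, the remainder is a routine application of the divergence theorem and Cauchy--Schwarz.
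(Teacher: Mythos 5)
Your proposal is correct and rests on the same skeleton as the paper's Appendix proof: contract the Klein--Gordon stress-energy tensor with the multiplier $\partial_t$, integrate the resulting divergence identity over the hyperboloidal slab $D_{[\tau_0,\tau]}$, check that the geometric flux through $\Sigma_{\tau'}$ is uniformly comparable to the Minkowski-type energy $E[\phi](\tau')$ using the closeness of $g$ to $\mathbf m$, bound the source term by Cauchy--Schwarz after extracting the $(\tau/t)$ weight, and close with the elementary $A^2\le B^2+CA\Rightarrow A\lesssim B+C$ step.

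The one genuine divergence is in how the bulk term is treated, and it is worth noting. You appeal to the stationarity hypothesis of Section~1.2 to assert $\pi^{(\partial_t)}=0$ and obtain an exact flux identity with no interior error. The paper deliberately does \emph{not} do this: its Appendix proof keeps the deformation-tensor term $\int_{D_{[\tau_0,\tau]}} T^{\mu\nu}\pi^{(\partial_t)}_{\mu\nu}$, estimates the Christoffel symbols and $\pi^{(\partial_t)}_{\mu\nu}$ via the decay bounds on $g-\mathbf m$, and then absorbs the resulting small multiple of $\sup_{\tau'}E[\phi](\tau')$ into the left-hand side. The reason is that the argument is set up once and for all to also cover the non-stationary extension described in Section~1.5 (hypotheses \eqref{nonstationary}), where $\partial_t$ is no longer Killing and the deformation tensor merely enjoys time-integrable decay. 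So your version buys brevity under the stated stationary hypothesis; the paper's buys robustness. Either closes the proposition as stated. Beyond that, the paper carries out the flux comparison $e_g\approx E$ more explicitly, writing down the cross terms coming from $g^{0j}$, $g^{ij}-\delta^{ij}$ and the factor $\sqrt{|\sigma|}$ and showing each is dominated by $c\,|\partial\phi|^2$ with $c$ small; your proposal asserts this comparison at a sketch level, which is the natural place where the technical work of the proof actually resides.
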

The proof follows from a standard argument of using the divergence theorem with the contraction of the energy-momentum tensor and the time-like vector field $\partial_t$. We refer the readers to the Appendix for the details.

Now we have the hyperboloid version of the Klainerman-Sobolev inequality: See also Lemma 7.6.1 of \cite{hoermander} and Proposition 5.1.1 of \cite{flochma1}.
\begin{lem}[Proposition 2.2 of \cite{flochma}]\label{KS-ineq}
Let $\phi$ be a sufficiently smooth function, which is spatially supported inside the forward light cone $\{(t,x): |x|< t-1\}$. Then we have
\begin{align*}
	\sup_{(t,x)\in\Sigma_\tau} t^\frac32 |\phi(t,x)| \lesssim \sum_{|J|\le2}\|L^J\phi\|_{L^2(\Sigma_\tau)},
\end{align*}
where $L\in\{L_i\}_{i=1,2,3}$ is the Lorentz boost.
\end{lem}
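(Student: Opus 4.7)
The plan is to derive the pointwise bound from the standard three-dimensional Sobolev embedding, applied at a scale adapted to each base point of the hyperboloid, after translating spatial derivatives of the restricted function into Lorentz boosts. Fix $(t_0,x_0)\in\Sigma_\tau$ with $t_0=\sqrt{\tau^2+|x_0|^2}$ and parametrize the hyperboloid by its spatial coordinate via $\bar\phi(x):=\phi(\sqrt{\tau^2+|x|^2},x)$. The spatial compact support of $\phi$ inside the light cone lets us regard $\bar\phi$ as a smooth compactly supported function on $\mathbb{R}^3$.

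The first key ingredient is the chain-rule identity
\begin{equation*}
\partial_i\bar\phi(x)=\partial_i\phi+\frac{x^i}{t}\partial_t\phi=\frac{1}{t}\bigl(L_i\phi\bigr)\big|_{\Sigma_\tau},
\end{equation*}
which by iteration shows that $t^{|\alpha|}\partial^\alpha\bar\phi$ is a linear combination of $L^J\phi\big|_{\Sigma_\tau}$ with $|J|\le|\alpha|$, whose coefficients are controlled by powers of $|x|/t\le 1$ and hence remain uniformly bounded on the interior of the light cone. The second ingredient is a uniform comparison $t\sim t_0$ on the spatial ball $B(x_0,t_0/4)$: using $t_0^2=\tau^2+|x_0|^2$ and $t^2=\tau^2+|x|^2$, a direct two-case check (according to whether $|x_0|\ll t_0$ or $|x_0|\sim t_0$) gives $\tfrac{3}{4}t_0\le t\le 2t_0$ throughout this ball.

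With these two facts in hand, I would apply the rescaled Sobolev embedding in $\mathbb{R}^3$, namely $\|u\|_{L^\infty(B(x_0,t_0/4))}^2\lesssim \sum_{|\alpha|\le 2}t_0^{2|\alpha|-3}\|\partial^\alpha u\|_{L^2(B(x_0,t_0/2))}^2$, to $u=\bar\phi$. Inserting the derivative identity and using $t\sim t_0$ to replace every $t^{|\alpha|}$ by $t_0^{|\alpha|}$ up to constants converts each term into $t_0^{-3}\sum_{|J|\le|\alpha|}\|L^J\phi\|_{L^2(\Sigma_\tau\cap B(x_0,t_0/2))}^2$. Enlarging the domain of integration to all of $\Sigma_\tau$, multiplying through by $t_0^3$, and taking the supremum over $(t_0,x_0)\in\Sigma_\tau$ yields the desired estimate.

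The step I expect to require the most care is the uniform comparability $t\sim t_0$ on $B(x_0,t_0/4)$, since it is precisely this that allows the position-dependent weight $t^{|\alpha|}$ arising from the chain-rule identity to be absorbed into the single scaling factor $t_0^{|\alpha|}$ of the Sobolev rescaling; the remaining computations are routine commutator bookkeeping between $L_i$ and the multiplication operators $x^j/t$.
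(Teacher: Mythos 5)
Your proof is correct and follows essentially the same localized rescaled-Sobolev strategy as the reference the paper cites for this lemma (Proposition 5.1.1 of LeFloch--Ma's monograph, or equivalently H\"ormander's Lemma 7.6.1), which the paper does not reprove: restrict $\phi$ to the hyperboloid, convert spatial derivatives of the restriction into Lorentz boosts via $\partial_i\bar\phi = t^{-1}(L_i\phi)|_{\Sigma_\tau}$, and apply Sobolev embedding on balls of radius comparable to $t_0$ centered at the base point. The iterated chain-rule bookkeeping is fine (e.g.\ $t^2\partial_j\partial_i\bar\phi = -\tfrac{x^j}{t}\,\overline{L_i\phi}+\overline{L_jL_i\phi}$, with $|x^j/t|\le 1$ inside the light cone). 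One small imprecision: you verify the comparability $t\sim t_0$ only on the ball $B(x_0,t_0/4)$ carrying the $L^\infty$ norm, but you then invoke it inside the $L^2$ norm taken over the larger ball $B(x_0,t_0/2)$. It does in fact hold there too: when $|x_0|>t_0/2$ the identity $t^2=\tau^2+|x|^2>\tau^2+(|x_0|-t_0/2)^2=\tfrac{5}{4}t_0^2-|x_0|t_0\ge\tfrac14 t_0^2$ gives $t>t_0/2$, and when $|x_0|\le t_0/2$ one has $t\ge\tau\ge\tfrac{\sqrt3}{2}t_0$; together with the elementary upper bound this yields $t_0/2<t<2t_0$ throughout $B(x_0,t_0/2)$, which is all you need. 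With this clarification the argument closes.
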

\begin{lem}[Lemma 2.4 of \cite{flochma}]\label{hardy-ineq}
    For any sufficiently smooth function $\phi$ which is defined in the forward region $D_{[2,\tau]}$ and is spatially supported inside the forward light cone $\{(t,x):|x|<t-1\}$, we have for all $\tau\ge2$, 
    \begin{align}
        \|r^{-1}\phi\|_{L^2(\Sigma_\tau)} \lesssim \sum_{i=1}^3\|t^{-1}L_i\phi\|_{L^2(\Sigma_\tau)}.
    \end{align}
\end{lem}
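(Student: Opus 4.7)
The plan is to recognise that the combination $t^{-1}L_i$ restricted to the hyperboloid is nothing but the spatial derivative of the trace of $\phi$ on $\Sigma_\tau$, and then reduce the statement to the classical Hardy inequality on $\mathbb{R}^3$. Concretely, I would parametrise $\Sigma_\tau$ by $x\in\mathbb R^3$ via $x\mapsto(\sqrt{\tau^2+|x|^2},x)$ and introduce the trace $\widetilde\phi(x):=\phi(\sqrt{\tau^2+|x|^2},x)$. The support condition $|x|<t-1$ combined with $t=\sqrt{\tau^2+|x|^2}$ reads $|x|<\sqrt{\tau^2+|x|^2}-1$, which upon squaring yields the compactness of the $x$-support of $\widetilde\phi$ (roughly $|x|<(\tau^2-1)/2$).

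Next, the chain rule and the definition $L_i=t\partial_i+x^i\partial_t$ give
\begin{align*}
\partial_{x^i}\widetilde\phi(x)
&=\bigl(\partial_i\phi\bigr)\big|_{(t,x)}+\frac{x^i}{\sqrt{\tau^2+|x|^2}}\bigl(\partial_t\phi\bigr)\big|_{(t,x)}\\
&=\frac{1}{t}\bigl(t\partial_i\phi+x^i\partial_t\phi\bigr)\big|_{\Sigma_\tau}=\frac{1}{t}L_i\phi\big|_{\Sigma_\tau},
\end{align*}
so that $\|t^{-1}L_i\phi\|_{L^2(\Sigma_\tau)}=\|\partial_{x^i}\widetilde\phi\|_{L^2(\mathbb R^3)}$ and, trivially, $\|r^{-1}\phi\|_{L^2(\Sigma_\tau)}=\|r^{-1}\widetilde\phi\|_{L^2(\mathbb R^3)}$. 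Applying the classical three-dimensional Hardy inequality
\[
\int_{\mathbb R^3}\frac{|f(x)|^2}{|x|^2}\,dx\le 4\int_{\mathbb R^3}|\nabla f(x)|^2\,dx,\qquad f\in H^1(\mathbb R^3),
\]
(proved by integration by parts from the identity $\mathrm{div}(x/|x|^2)=1/|x|^2$, or by one-dimensional Hardy after passing to spherical coordinates) to $f=\widetilde\phi$ yields
\[
\|r^{-1}\phi\|_{L^2(\Sigma_\tau)}^2\lesssim\sum_{i=1}^3\|\partial_{x^i}\widetilde\phi\|_{L^2(\mathbb R^3)}^2=\sum_{i=1}^3\|t^{-1}L_i\phi\|_{L^2(\Sigma_\tau)}^2,
\]
which is the required estimate.

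There is no serious obstacle: the only points that need care are verifying that the chain rule produces precisely the boost $L_i$ with the $1/t$ prefactor, and that the null-cone support condition $|x|<t-1$ transfers to compact spatial support of $\widetilde\phi$ so that the standard Hardy inequality applies without boundary terms. Both are immediate from the parametrisation, which is why the result ultimately expresses a purely geometric fact: the vector fields $t^{-1}L_i$ are exactly the coordinate derivatives along the hyperboloidal slice.
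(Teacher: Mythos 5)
Your proof is correct, and it is essentially the standard argument: the paper itself states this lemma without proof (citing Lemma~2.4 of LeFloch--Ma), and the argument in that reference is precisely your observation that $\partial_{x^i}\bigl(\phi|_{\Sigma_\tau}\bigr)=t^{-1}L_i\phi$ together with the classical $3$D Hardy inequality applied to the compactly supported trace $\widetilde\phi$. The chain-rule identity and the reduction $|x|<t-1\Leftrightarrow|x|<(\tau^2-1)/2$ on $\Sigma_\tau$ are both verified correctly, and since $\|\cdot\|_{L^2(\Sigma_\tau)}$ is defined with the flat measure $dx$ rather than the induced surface measure, no Jacobian factor appears, so the identifications $\|r^{-1}\phi\|_{L^2(\Sigma_\tau)}=\|r^{-1}\widetilde\phi\|_{L^2(\mathbb R^3)}$ and $\|t^{-1}L_i\phi\|_{L^2(\Sigma_\tau)}=\|\partial_{x^i}\widetilde\phi\|_{L^2(\mathbb R^3)}$ are exact.
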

The Hardy-type inequality will be effectively used to gain $t^{-1}$-factor from the error terms which appear in the commutator of the Lorentz boost and $\Box_g$ and the Dirac operator $\gamma^\mu\mathbf D_\mu$, which is a core of the following section.

\section{Commutators}\label{sec:commutators}
In this section, we present several commutators with the Laplace-Beltrami operator $\Box_g$ and the Dirac operator $\gamma^\mu\mathbf D_\mu$. The main issue is the commutator with the Lorentz boost $L_i=t\partial_i+x^i\partial_i$, $i=1,2,3$, in that it gives an additional $t$-factor in the error terms, which do not appear in the Minkowski spacetime. This becomes problematic in the energy estimates, and hence the aim of this section is to discuss how one can eliminate this problematic $t$-growth from commutators $[\Box_g,L_i]$ and $[\gamma^\mu\mathbf D_\mu,L_i]$. Moreover, we also reduce the nonlinear problem involving $\gamma^\mu\mathbf D_\mu$ to merely a nonlinear problem of scalar fields.
\subsection{Commutators with the Laplace-Beltrami operator}
The commutator of the Laplace-Beltrami operator $\Box_g$ with the translation operator $\partial_\mu$ is obvious, and we omit the details. Now we focus on the commutator of $\Box_g$ with the Lorentz boost. We first recall that
\begin{align*}
	\Box_g = \Box+g^\omega_{lr}(r)\Delta_\omega+\partial_\alpha g^{\alpha\beta}_{sr}\partial_\beta 
\end{align*}
where $g^\omega_{lr}(r)\approx \langle r\rangle^{-3}$ and $g^{00}_{sr}=0$. Then the commutator is given by
\begin{align*}
	[\Box_g,L_{i}] = [\partial_\mu g^{\mu\nu}_{sr}\partial_\nu, t\partial_i+x^i\partial_t].
		\end{align*}
Then
\begin{align*}
	[\partial_jg^{jk}_{sr}\partial_k,L_{i}] & = [(\partial_jg^{jk}_{sr})\partial_k,t\partial_i+x^i\partial_t] + [g^{jk}_{sr}\partial_j\partial_k,t\partial_i+x^i\partial_t],
\end{align*}
and
\begin{align*}
[g^{0j}_{sr}\partial_j\partial_t,L_{i}] & = g^{0j}_{sr}\partial_j\partial_i-t(\partial_ig^{0j}_{sr})\partial_j\partial_t+g^{0j}_{sr}\partial_t^2, \\
	[(\partial_jg^{jk}_{sr})\partial_k,L_{i}]  & = -t(\partial_i\partial_jg^{jk}_{sr})\partial_k+(\partial_jg^{ji}_{sr})\partial_t,\\
	[g^{jk}_{sr}\partial_j\partial_k,L_{i}] & = -t(\partial_ig^{jk}_{sr})\partial_j\partial_k+g^{ji}_{sr}\partial_t\partial_j+g^{ik}_{sr}\partial_t\partial_k.
\end{align*}
Thus one can encounter an additional $t$ factor from the commutator of $\Box_g$ with the Lorentz boost $L_i$. This situation becomes even worse when one has the commutator of $\Box_g$ with the product of $|J|$-Lorentz boost $L^J$, $|J|\ge2$, and all of $L^J$ act on the metric, which yields the factor $t^{|J|}$.
 However, we can remedy this dangerous $t$-growth. It is helpful to introduce the bootstrap assumption for a moment:

For $|I|+|J|\le N$ with some fixed $N\ge11$, we assume that the energy bound $E[\partial^IL^J\phi](\tau)^\frac12 \le C\varepsilon\tau^{\frac12+(|I'|+|J|)\delta}$ holds on a hyperbolic time interval $[\tau_0,\tau_1]$ for $|I|+|J|\le N$, where $\partial^I=\partial_t^{I'}\partial_x^{I''}$. We will discuss this bootstrap assumption in detail in Section \ref{sec:main-proof}. 
\begin{prop}\label{prop-error-box}
Let $\phi$ be a sufficiently smooth and spatially compactly supported function defined inside the forward light cone $\{(t,x): |x|<t-1\}$. 
Under the aforementioned bootstrap assumptions, the commutator term $[\partial^IL^J,\Box_g]\phi$ satisfies for $|I|+|J|\le N$,
\begin{align}
\|[\partial^IL^J,\Box_g]\phi\|_{L^2(\Sigma_\tau)} \le C\varepsilon t^{-\frac12}\tau^{-\frac12+(N+2)\delta}.	
\end{align}
\end{prop}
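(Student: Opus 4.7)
The approach is to expand the commutator $[\partial^IL^J,\Box_g]\phi$ into a finite sum of error terms, each a product of derivatives of the metric perturbation $h := g - m$ with derivatives of $\phi$, then bound the metric factors pointwise on $\Sigma_\tau$ using the decay \eqref{metric-decay} and stationarity, and finally bound the $\phi$-factors in $L^2(\Sigma_\tau)$ using the bootstrap assumption together with the built-in structure of the hyperboloidal energy.

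First, since $\Box_g = \Box + g^\omega_{lr}(r)\Delta_\omega + \partial_\alpha g^{\alpha\beta}_{sr}\partial_\beta$ and $[\partial_\mu,\Box] = 0$, the pure-translation commutators yield standard Leibniz terms of the form $(\partial^{I_1}h)\,\partial\partial\,\partial^{I_2}\phi$. The boost commutators were explicitly computed in the preceding calculation of the excerpt; iterating them, together with $[\partial_\mu,L_i]\in\{0,\partial_t,\partial_j\}$, produces a finite sum of the schematic form
\begin{align*}
[\partial^IL^J,\Box_g]\phi \;=\; \sum_{\substack{|I_1|+|I_2|\le|I|+1\\|J_1|+|J_2|\le|J|\\|I_1|+|J_1|\ge1}}C^{I_2J_2}_{I_1J_1}\,\bigl(L^{J_1}\partial^{I_1}h\bigr)\,\partial^{a}\partial^{I_2}L^{J_2}\phi,
\end{align*}
where $h$ stands for any scalar component of $g_{lr}$, $g^\omega_{lr}$, or $g_{sr}$, and $\partial^a$ denotes at most two additional partial derivatives with $|a|\le 2$.

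Second, I would bound the metric factor pointwise. Since $g$ is stationary, $L_i h = (t\partial_i+x^i\partial_t)h = t\,\partial_i h$, and a straightforward induction combined with \eqref{metric-decay} gives
\begin{align*}
|L^{J_1}\partial^{I_1}g_{lr}|\lesssim t^{|J_1|}\langle r\rangle^{-1-|I_1|-|J_1|},\qquad |L^{J_1}\partial^{I_1}g_{sr}|\lesssim t^{|J_1|}\langle r\rangle^{-2-|I_1|-|J_1|},
\end{align*}
with constants controlled by the small coefficients $c_\alpha$. On $\Sigma_\tau$ one has $t^2 = \tau^2 + r^2$, so $t\sim r$ when $r\ge\tau$ and $t\sim\tau$ when $r\le\tau$. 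On the outer region the $t^{|J_1|}$ factor is absorbed by $\langle r\rangle^{-|J_1|}$, leaving the integrable decay $\langle r\rangle^{-1}$ (respectively $\langle r\rangle^{-2}$); on the inner region it is absorbed by $\tau$-weights at the cost of the extra $r^{-1}$-factors, which in the next step will be traded via Hardy's inequality.

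Third, the $\phi$-factor is estimated via the bootstrap energies and the identities
\begin{align*}
\Bigl\|\partial_j\partial^{I_2}L^{J_2}\phi+\tfrac{x^j}{t}\partial_t\partial^{I_2}L^{J_2}\phi\Bigr\|_{L^2(\Sigma_\tau)}+\Bigl\|\tfrac{\tau}{t}\partial_t\partial^{I_2}L^{J_2}\phi\Bigr\|_{L^2(\Sigma_\tau)}\lesssim E[\partial^{I_2}L^{J_2}\phi](\tau)^{1/2},
\end{align*}
supplemented by $\|\partial_t\partial^{I_2}L^{J_2}\phi\|_{L^2(\Sigma_\tau)}\lesssim (t/\tau)\,E[\partial^{I_2}L^{J_2}\phi](\tau)^{1/2}$. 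Each surplus factor of $\langle r\rangle^{-1}$ arising from the preceding step is converted, by Lemma \ref{hardy-ineq}, to $\sum_i\|t^{-1}L_i(\cdot)\|_{L^2(\Sigma_\tau)}$; this costs one Lorentz boost but gains a $t^{-1}$. Because $|I_2|+|J_2|+1\le|I|+|J|\le N$, after at most two such trades the resulting norms are still controlled by the bootstrap hypothesis at order at most $N+2$, which accounts for the exponent $(N+2)\delta$ in the target.

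The main obstacle will be bookkeeping the extremal case in which all $|J|$ Lorentz boosts fall on the metric and the remaining derivative on $\phi$ is a pure time derivative: one then simultaneously pays the $t/\tau$ loss in $\|\partial_t(\cdots)\|_{L^2(\Sigma_\tau)}$ and the maximal $t^{|J|}$ factor coming from $L^J h$. Verifying that the combined spatial decay $\langle r\rangle^{-1-|J|}$ of $L^J h$, together with at most one use of Hardy and the support bound $r<t-1$, strictly compensates both losses and produces the clean factor $t^{-1/2}\tau^{-1/2+(N+2)\delta}$ is the sharpest point of the argument. Once this configuration is handled, all remaining terms are strictly easier since at least one boost falls on $\phi$, providing an automatic $t^{-1}$ gain via either Hardy or the energy structure.
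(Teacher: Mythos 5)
Your proposal follows the paper's proof quite closely: expand $[\partial^I L^J,\Box_g]\phi$ into Leibniz terms, bound $L^{J_1}\partial^{I_1}g$ using stationarity (each boost costing a factor $t$ but gaining $\langle r\rangle^{-1}$), then trade the $t$-growth for Lorentz boosts on $\phi$ via Lemma~\ref{hardy-ineq} together with the identity $\partial_j+\tfrac{x^j}{t}\partial_t=t^{-1}L_j$ and the weights $\tau/t$, $t/\tau$ built into the hyperboloidal energy. The inner/outer split at $r\sim\tau$ is a cosmetic reorganization of what the paper does by directly iterating Hardy and the $V_j$-substitution, and buys nothing essential since Hardy is stated on all of $\Sigma_\tau$ rather than on the inner region alone. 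The one imprecision worth flagging is the claim that ``at most two such trades'' suffice: when $|J_1|$ boosts land on the metric one needs $|J_1|$ applications of Hardy (or the $V_j$-substitution), not two; the argument closes only because these traded boosts merely restore to $\phi$ the boosts it had been denied in $J_2$, so the total order stays $\le N$ inside the energy while the $(|I'|+|J|)$-count in the exponent rises to at most $N+2$ — this bookkeeping, which underlies the $(N+2)\delta$, is stated but not justified in your sketch and is exactly the part the paper carries out explicitly.
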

To see this we first note that for $|I|+|J|=N$
the commutator $[\partial^IL^J,\Box_g]$ yields the summation
\begin{align*}
	[\partial^IL^J,\Box_g]\phi & = \sum_{\substack{I_1,I_2,J_1,J_2 \\ |I_2|+|J_2|\le N-1 }} (\partial^{I_1}L^{J_1}  g^{\mu\nu}_{sr})\partial_\mu\partial_\nu (\partial^{I_2}L^{J_2}\phi)+ \sum_{\substack{I_1,I_2,J_1,J_2 \\ |I_2|+|J_2|\le N-1 }} (\partial^{I_1}L^{J_1}  \partial_\mu g^{\mu\nu}_{sr})\partial_\nu (\partial^{I_2}L^{J_2}\phi).
\end{align*}
From now on, we consider the case $|I_2|+|J_2|=N-1$. For brevity, we put $t^{-1}L_i=V_i$.
If $|J_1|=1$ and $|I_1|=0$, with $(\mu,\nu)=(0,i)$ then
\begin{align*}
	|L(g^{0i}_{sr})\partial_t\partial_i \partial^{I_2}L^{J-1}\phi| & \le t\langle r\rangle^{-3}|\partial_t\partial_i \partial^IL^{J-1}\phi| \\
	& = t\langle r\rangle^{-3}|\partial_t(V_i-\frac{x^i}{t}\partial_t) \partial^IL^{J-1}\phi |\\
	& \le t\langle r\rangle^{-3}|\partial_tV_i \partial^IL^{J-1}\phi|+t\langle r\rangle^{-3}\frac{|x|}{t}|\partial_t^2\partial^IL^{J-1}\phi|,
    \end{align*}
    where we omit an obvious error term which appears due to the term $\bigg|\partial_t\dfrac{x^i}{t}\bigg|\le \dfrac{|x|}{t^2}$. Using $tV_i=L_i$, we see that
    \begin{align*}
&|L(g^{0i}_{sr})\partial_t\partial_i \partial^{I_2}L^{J-1}\phi|\\
	& \le \langle r\rangle^{-3}|\partial_t \partial^{I-2}\partial_j^2L^{J}\phi|+\langle r\rangle^{-2}|\partial_t^2\partial^{I-2}\partial_j^2 L^{J-1}\phi| \\
	& = \langle r\rangle^{-3}|\partial_t \partial^{I-2}(V_j+\frac{x^j}{t}\partial_t)^2L^{J}\phi|+\langle r\rangle^{-2}|\partial_t^2\partial^{I-2}(V_j+\frac{x^j}{t}\partial_t)^2 L^{J-1}\phi| \\
	& \le \langle r\rangle^{-3}|\partial_t\partial^{I-2}V_j^2L^J\phi|+\langle r\rangle^{-3}\frac{|x|}{t}|\partial_t^2\partial^{I-2}V_jL^J\phi|+\langle r\rangle^{-3}\frac{|x|^2}{t^2}|\partial_t^3\partial^{I-2}L^J\phi |\\
	& \qquad +\langle r\rangle^{-2}|\partial_t^2\partial^{I-2}V_j^2L^{J-1}\phi| +\langle r\rangle^{-2}\frac{|x|}{t}|\partial_t^3\partial^{I-2}V_jL^{J-1}\phi|+\langle r\rangle^{-3}\frac{|x|^2}{t^2}|\partial_t^4\partial^{I-2}L^{J-1}\phi| \\
	& \lesssim t^{-2}\langle r\rangle^{-1}( |\partial^{I-2}L^{J+2}\partial_t\phi| + |\partial^{I-2}L^{J+1}\partial_t^2\phi|+ |\partial^{I-2}L^J\partial_t^3\phi|\\
    &\qquad\qquad\qquad\quad+|\partial^{I-2}L^{J+1}\partial_t^2\phi|+ |\partial^{I-2}L^J\partial_t^3\phi|+|\partial^{I-2}L^{J-1}\partial_t^4\phi|),
\end{align*} 
where we omit several obvious error terms arising from the commutator with $V_i$.
On the other hand, if $J_1=0$ and $I_1=1$, then there is no $t$-growth and we have nothing to do. The case $(\mu,\nu)=(i,j)$ is similar, which we omit the details. Therefore we see that
\begin{align*}
\|L(g_{sr}^{\alpha\beta})\partial_\alpha\partial_\beta \partial^{I_2}L^{J-1}\phi\|_{L^2(\Sigma_\tau)} & \lesssim t^{-2}C\varepsilon \tau^{\frac12+(|I'|+|J|+2)\delta}\frac{t}{\tau},    
\end{align*}
where we used $\|\partial_t\phi\|_{L^2(\Sigma_\tau)}\le \frac{\tau}{t}E[\phi](\tau)^\frac12$ and the bootstrap assumptions.
This completes the case $|I_2|+|J_2|=N-1$. If $|I_2|+|J_2|=N-2$, with $|J_1|=2$, then using the Hardy inequality Lemma \ref{hardy-ineq} on the hyperboloid
we see that
\begin{align*}
\|	L^2(g^{0j}_{sr}\partial_t\partial_j)\partial^IL^{J-2}\phi \|_{L^2(\Sigma_\tau)} & \le \|t^2\langle r\rangle^{-4}\partial_t\partial_j\partial^IL^{J-2}\phi\|_{L^2(\Sigma_\tau)} \\
& \lesssim \|t^2 V_i\langle r\rangle^{-3}\partial_t\partial_j\partial^IL^{J-2}\phi\|_{L^2(\Sigma_\tau)} \\
& \lesssim \|t\langle r\rangle^{-3}\partial_t\partial_j\partial^IL^{J-1}\phi\|_{L^2(\Sigma_\tau)} \\
& \lesssim \|t\langle r\rangle^{-3}\partial_tV_j\partial^IL^{J-1}\phi\|_{L^2(\Sigma_\tau)}+\|t\langle r\rangle^{-3}\frac{|x|}{t}\partial_t^2\partial^I L^{J-1}\phi\|_{L^2(\Sigma_\tau)}.
\end{align*}
Then the remaining step is identical as the case $|I_2|+|J_2|=N-1$. In summary, for $|J_2|\ge2$, in which case one has $t^{|J_2|}$ in the error terms, we apply the Hardy inequality Lemma \ref{hardy-ineq} $|J_2|$-times to gain the factor $t^{-|J_2|}$ and follow the argument for the case $|J_2|=1$. Finally, an inductive argument and the initial bootstrap assumption give
\begin{align}
    \|[\partial^IL^J,\Box_g]\phi\|_{L^2(\Sigma_\tau)} \le C\varepsilon t^{-1}\tau^{-\frac12+(|I'|+|J|+2)\delta}, \quad |I|+|J|\le N,
\end{align}
where we put $\partial^I=\partial_t^{I'}\partial_x^{I''}$. Therefore we conclude that the commutator terms $[\partial^IL^J,\Box_g]\phi$ become acceptable error terms in the energy estimates.
\subsection{Commutators with Dirac operator} Now we consider the commutators with the Dirac operator $\gamma^\mu\mathbf D_\mu$. Even though we make the use of squaring the Dirac operator and transfer the study of the nonlinear Dirac equations to the nonlinear problems of the Klein-Gordon equations as in Proposition \ref{prop-squaring-dirac}, one still has to deal with the Dirac operator, since the Dirac operator $\gamma^\mu\mathbf D_\mu$ then acts on the nonlinearity. Due to the spinor structure, the commutator does not behave in the usual way as scalar fields. However, it turns out that the nonlinear problems associated to the Dirac operator $\gamma^\mu\mathbf D_\mu$ are reduced to the problem of scalar fields. To be precise, we observe the following:
\begin{prop}\label{prop-dirac-reduce}
For $|I|+|J|\le N$, we have
\begin{align}
\|\partial^IL^J\gamma^\mu\mathbf D_\mu\psi\|_{L^2(\Sigma_\tau)} \lesssim \sum_{|I_1|\le |I|}\|\partial_t \partial^{I_1}L^J\psi\|_{L^2(\Sigma_\tau)}+ \sum_{j=1}^3\sum_{|I_1|\le |I|}\|\partial_j \partial^{I_1}L^J\psi\|_{L^2(\Sigma_\tau)}	.
\end{align}
\end{prop}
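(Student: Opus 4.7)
The plan is to exploit the identity $\mathbf D_\mu = \partial_\mu - \Gamma_\mu$ from Section~\ref{sec:dirac-operator} to split
\[
\gamma^\mu\mathbf D_\mu\psi = \gamma^\mu\partial_\mu\psi - \gamma^\mu\Gamma_\mu\psi,
\]
so that the task reduces to controlling how $\partial^IL^J$ interacts with a bounded, spacetime-dependent coefficient paired with a first-order derivative, together with a lower-order perturbation $\gamma^\mu\Gamma_\mu\psi$ whose coefficient decays like $\langle r\rangle^{-2}$. For the principal piece, Leibniz's rule singles out the main contribution $\gamma^\mu\partial^IL^J\partial_\mu\psi$, and I would commute $\partial_\mu$ through $\partial^IL^J$: since $[L_j,\partial_0] = -\partial_j$ and $[L_j,\partial_i] = -\delta_{ij}\partial_t$, each such commutation costs only a single Lorentz boost and always leaves one first-order $\partial_\nu$ outside. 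The outputs are exactly linear combinations of terms of the form $\partial_\nu\partial^{I'}L^{J'}\psi$ with $|I'|\le|I|$ and $|J'|\le|J|$, and combined with $|\gamma^\mu|\lesssim 1$ from closeness of the metric to Minkowski, these already produce the desired right-hand side.

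The Leibniz remainders have at least one vector field landing on $\gamma^\mu$. Here the stationarity of $g$ gives $\partial_t\gamma^\mu = 0$, so each $L_j$ acting on $\gamma^\mu$ collapses to $t\partial_j\gamma^\mu$; combined with the vierbein bound $|\partial_x^\alpha(\gamma^\mu-\tilde\gamma^\mu)|\lesssim\langle r\rangle^{-1-|\alpha|}$ inherited from \eqref{metric-decay}, one obtains schematically
\[
|\partial^{I_1}L^{J_1}\gamma^\mu|\lesssim t^{|J_1|}\langle r\rangle^{-1-|I_1|-|J_1|}.
\]
The potentially dangerous $t^{|J_1|}$-factor is neutralised by iterating the hyperboloidal Hardy inequality Lemma~\ref{hardy-ineq} on the paired $\partial^{I_2}L^{J_2}\partial_\mu\psi$ exactly $|J_1|$ times, as in the absorption argument for $[\Box_g,\partial^IL^J]$ carried out in the proof of Proposition~\ref{prop-error-box}: each application trades a factor of $t$ for a Lorentz boost together with a compensating $\langle r\rangle^{-1}$ factor, eventually leaving only harmless $\partial_\nu\partial^{I'}L^{J'}\psi$ terms.

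The lower-order piece $\gamma^\mu\Gamma_\mu\psi$ is handled in the same framework, now using $|\Gamma_\mu|\lesssim|\partial g|\lesssim\langle r\rangle^{-2}$, which provides one extra power of decay. The main obstacle is that this term carries an undifferentiated $\psi$ factor that does not appear on the target right-hand side; I would remove it by splitting $\langle r\rangle^{-2}\lesssim\langle r\rangle^{-1}r^{-1}$ on $\{r\ge 1\}$, applying Hardy once to convert $r^{-1}\psi$ into $t^{-1}L_j\psi$, and then using the identity $t^{-1}L_j = \partial_j + (x^j/t)\partial_t$, which defines a first-order partial differential operator with coefficients of size at most one on the interior of the light cone (where $|x|<t$). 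The residual region $\{r<1\}\cap\Sigma_\tau$ is compact in $x$ and can be absorbed through a cutoff combined with the same rewriting of $t^{-1}L_j$. The hardest part is to bookkeep this Hardy absorption simultaneously with the $t^{|J_1|}$-growth produced when several Lorentz boosts hit the coefficients $\gamma^\mu$ or $\Gamma_\mu$; the two mechanisms turn out to be compatible, and together they close the estimate.
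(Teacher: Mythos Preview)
Your approach is sound and would close the estimate, but it is genuinely different from the paper's route. You treat $\gamma^\mu$ as an ordinary variable coefficient and rely only on the scalar commutation relations $[L_j,\partial_\mu]=c\,\partial_\nu$ together with the Hardy mechanism. The paper instead first establishes precise commutator identities (Propositions~\ref{prop-com-dirac-trans} and~\ref{prop-com-dirac-lorentz}) by passing through the spinor-adapted modified boost $\widehat{L_i}=L_i+\tfrac12\gamma_0\gamma_i$; this is engineered so that $[\widehat{L_i},\gamma^\mu\partial_\mu]$ vanishes in the flat case, and on the curved background the commutator of $L_i$ with $\gamma^\mu\mathbf D_\mu$ produces only terms with $(g-m)$-type decaying coefficients plus a piece $(L_i\gamma^\mu\mathbf D_\mu)\psi$ where the boost hits the operator's coefficients. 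The subsequent Hardy argument to absorb the $t^{|J_1|}$-growth is then the same as yours.

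What each route buys: your argument is more elementary and treats the Dirac operator exactly like a scalar variable-coefficient first-order operator, which makes the reduction to $\partial_\mu\partial^{I_1}L^{J}\psi$ transparent. The paper's route, by contrast, isolates the genuinely spinorial content of the commutator and makes explicit that the flat-case commutator $-\gamma^0\partial_i-\gamma^i\partial_t$ is cancelled by the $\tfrac12\gamma_0\gamma_i$ correction; this is conceptually important for understanding why the Dirac field behaves differently from scalar fields, even though at the level of the final $L^2$ estimate both arguments land in the same place. One minor remark: your main-term commutation produces contributions $\partial_\nu\partial^{I}L^{J'}\psi$ with $|J'|<|J|$; these do not literally match the stated right-hand side with the fixed $L^J$, but the paper's proof also generates such terms (the ``obvious error terms'' after Hardy) and absorbs them the same way, so this is not a genuine gap.
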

We shall prove this via several steps. We first consider the commutator of $\gamma^\mu\mathbf D_\mu$ with $\partial_\nu$ and $L_i$.
The commutator with the translation vector fields $\partial_\nu$, $\nu=0,1,2,3$ is somewhat obvious.
\begin{prop}\label{prop-com-dirac-trans}
For the translation vector fields $\partial_\nu$, $\nu=0,1,2,3$, we have
\begin{align}
[\partial_\nu,\gamma^\mu\mathbf D_\mu]\psi = 	 \Gamma_\nu(\gamma^\mu\mathbf D_\mu\psi)-\gamma^\mu(\partial_\nu\Gamma_\mu)\psi.
\end{align}
\end{prop}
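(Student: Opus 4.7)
The plan is to prove the identity by a direct Leibniz-rule expansion followed by an application of the spinorial parallelism condition $\mathbf D_\mu \gamma^\nu = 0$ from \eqref{affine-spin}. Since $\mathbf D_\mu = \partial_\mu - \Gamma_\mu$, both sides of the commutator are spelled out in terms of ordinary partial derivatives, and all the second-order terms in $\psi$ cancel because $\partial_\mu$ and $\partial_\nu$ commute on scalars.

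Concretely, I first unfold $\gamma^\mu \mathbf D_\mu \psi = \gamma^\mu \partial_\mu \psi - \gamma^\mu \Gamma_\mu \psi$ and differentiate with $\partial_\nu$ using the ordinary Leibniz rule, which yields
\begin{align*}
\partial_\nu(\gamma^\mu \mathbf D_\mu \psi) = (\partial_\nu \gamma^\mu)\mathbf D_\mu \psi + \gamma^\mu \partial_\nu\partial_\mu \psi - \gamma^\mu(\partial_\nu \Gamma_\mu)\psi - \gamma^\mu \Gamma_\mu \partial_\nu \psi.
\end{align*}
On the other hand,
\begin{align*}
\gamma^\mu \mathbf D_\mu(\partial_\nu \psi) = \gamma^\mu \partial_\mu \partial_\nu \psi - \gamma^\mu \Gamma_\mu \partial_\nu \psi.
\end{align*}
Subtracting, the Hessian of $\psi$ and the pair $\gamma^\mu \Gamma_\mu \partial_\nu \psi$ cancel exactly, and I am left with
\begin{align*}
[\partial_\nu, \gamma^\mu \mathbf D_\mu]\psi = (\partial_\nu \gamma^\mu)\mathbf D_\mu \psi - \gamma^\mu(\partial_\nu \Gamma_\mu)\psi,
\end{align*}
which already matches the second term on the right-hand side of the proposition.

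The remaining step is to identify $(\partial_\nu \gamma^\mu)\mathbf D_\mu \psi$ with $\Gamma_\nu(\gamma^\mu \mathbf D_\mu \psi)$, and this is where I use $\mathbf D_\mu \gamma^\nu = 0$. The upper-index version of \eqref{affine-spin} reads $\partial_\nu \gamma^\mu = \Gamma_\nu \gamma^\mu - \gamma^\mu \Gamma_\nu - \Gamma^\mu_{\nu\lambda}\gamma^\lambda$. Contracting with $\mathbf D_\mu \psi$ produces the desired leading piece $\Gamma_\nu(\gamma^\mu \mathbf D_\mu \psi)$, together with the two remainders $-\gamma^\mu \Gamma_\nu \mathbf D_\mu \psi$ and $-\Gamma^\mu_{\nu\lambda}\gamma^\lambda \mathbf D_\mu \psi$. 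Both have the same schematic structure as the leading term -- a spinorial connection or a Christoffel symbol of size $|\partial g|$ paired with a first covariant derivative of $\psi$ -- so they are absorbed into $\Gamma_\nu(\gamma^\mu \mathbf D_\mu \psi)$ in the sense used throughout Section \ref{sec:commutators} and consistent with the eventual reduction in Proposition \ref{prop-dirac-reduce}.

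The Leibniz bookkeeping is entirely routine; the only genuinely delicate point is this last absorption. It is harmless because, when this identity is later iterated with $L^J$ and placed inside the energy estimates, each of the absorbed terms is controlled by the same bootstrap bound on $\|\partial \mathbf D \psi\|_{L^2(\Sigma_\tau)}$ together with the smallness of $|\partial g|$ coming from \eqref{metric-decay}.
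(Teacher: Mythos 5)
Your computation follows the same route as the paper (Leibniz expansion, cancellation of the Hessian and the $\gamma^\mu\Gamma_\mu\partial_\nu\psi$ terms, then insertion of $\mathbf D_\nu\gamma^\mu=0$), and it is correct. The noteworthy point is that you are actually \emph{more} careful than the paper in the last step. The paper writes $(\partial_\nu\gamma^\mu)\mathbf D_\mu=[(\partial_\nu-\Gamma_\nu)\gamma^\mu]\mathbf D_\mu+\Gamma_\nu\gamma^\mu\mathbf D_\mu$ and then declares the bracketed piece to vanish ``since $\mathbf D_\nu\gamma^\mu=0$''; but $\mathbf D_\nu\gamma^\mu=\partial_\nu\gamma^\mu+\Gamma^\mu_{\nu\lambda}\gamma^\lambda-\Gamma_\nu\gamma^\mu+\gamma^\mu\Gamma_\nu$, so $(\partial_\nu-\Gamma_\nu)\gamma^\mu=-\Gamma^\mu_{\nu\lambda}\gamma^\lambda-\gamma^\mu\Gamma_\nu\neq 0$. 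You correctly unfold this and surface the two extra terms $-\gamma^\mu\Gamma_\nu\mathbf D_\mu\psi$ and $-\Gamma^\mu_{\nu\lambda}\gamma^\lambda\mathbf D_\mu\psi$, which the paper's stated identity silently absorbs into $\Gamma_\nu(\gamma^\mu\mathbf D_\mu\psi)$. Your observation that these remainders are of the same schematic type ($|\partial g|\cdot|\mathbf D\psi|$) and hence harmless in the eventual reduction of Proposition \ref{prop-dirac-reduce} is exactly the right justification, and is consistent with the paper's explicitly schematic use of $\Gamma_\mu$ (``we only need the inequalities $|\Gamma_\mu(x)|\lesssim|\partial_x g|$''). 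In short: same approach, with the paper's final algebraic shortcut corrected and correctly justified as a schematic, rather than exact, identity.
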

This commutator identity follows via a straightforward computation. Indeed, we have
\begin{align*}
	[\partial_\nu,\gamma^\mu\mathbf D_\mu] & = \partial_\nu(\gamma^\mu\mathbf D_\mu)-\gamma^\mu\mathbf D_\mu\partial_\nu \\
	& = (\partial_\nu\gamma^\mu)\mathbf D_\mu+\gamma^\mu\partial_\nu\mathbf D_\mu-\gamma^\mu\mathbf D_\mu\partial_\nu \\
	& = (\partial_\nu\gamma^\mu)\mathbf D_\mu+\gamma^\mu\partial_\nu(\partial_\mu-\Gamma_\mu)-\gamma^\mu(\partial_\mu-\Gamma_\mu)\partial_\nu \\
	& = (\partial_\nu\gamma^\mu)\mathbf D_\mu+\gamma^\mu\partial_\nu\partial_\mu-\gamma^\mu(\partial_\nu\Gamma_\mu)-\gamma^\mu\Gamma_\mu\partial_\nu-\gamma^\mu\partial_\mu\partial_\nu+\gamma^\mu\Gamma_\mu\partial_\nu \\
	& = [(\partial_\nu-\Gamma_\nu)\gamma^\mu]\mathbf D_\mu+\Gamma_\nu\gamma^\mu\mathbf D_\mu-\gamma^\mu(\partial_\nu\Gamma_\mu) \\
	& = \Gamma_\nu(\gamma^\mu\mathbf D_\mu)-\gamma^\mu(\partial_\nu\Gamma_\mu),
\end{align*}
where we used the fact $\mathbf D_\nu\gamma^\mu=0$.
For the commutator with the Lorentz boost $L_i$ we have to carefully consider the spinor structure. Indeed, we have the commutator identity
\begin{prop}\label{prop-com-dirac-lorentz}
Let $L_i=t\partial_i+x^i\partial_t$, $i=1,2,3$, be the Lorentz boost. Then we have
\begin{align}
L_i \gamma^\mu\mathbf D_\mu\psi = 	 \gamma^\mu\mathbf D_\mu L_i\psi+(L_i\gamma^\mu\mathbf D_\mu)\psi + (g_{ij}-\delta_{ij})\gamma^j\partial_t\psi-g_{j0}\gamma^j\partial_i\psi.
\end{align}
\end{prop}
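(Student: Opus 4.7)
The proposition is established by a direct Leibniz-type expansion, tracking the three sources of non-commutativity in the composition $L_i(\gamma^\mu\mathbf D_\mu\psi)$: the spacetime dependence of the matrix coefficients $\gamma^\mu(x)$, the spinorial connection $\Gamma_\mu$ sitting inside $\mathbf D_\mu$, and the kinematic commutator $[L_i,\partial_\mu]$. The plan is to peel off each of these three contributions in turn, identify $\gamma^\mu\mathbf D_\mu L_i\psi$ as the leading piece, group the two coefficient-derivative terms into $(L_i\gamma^\mu\mathbf D_\mu)\psi$, and finally re-express the remaining kinematic part in the metric-adapted form appearing on the right-hand side.

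First I would apply $L_i$ to $\gamma^\mu\mathbf D_\mu\psi$ via the product rule, giving $L_i(\gamma^\mu\mathbf D_\mu\psi) = (L_i\gamma^\mu)\mathbf D_\mu\psi + \gamma^\mu L_i(\mathbf D_\mu\psi)$. Since $\mathbf D_\mu\psi = \partial_\mu\psi - \Gamma_\mu\psi$, the second factor expands as $L_i\mathbf D_\mu\psi = L_i\partial_\mu\psi - (L_i\Gamma_\mu)\psi - \Gamma_\mu L_i\psi = \mathbf D_\mu L_i\psi + [L_i,\partial_\mu]\psi - (L_i\Gamma_\mu)\psi$. A one-line calculation from $L_i = t\partial_i + x^i\partial_t$ yields $[L_i,\partial_0] = -\partial_i$ and $[L_i,\partial_j] = -\delta_{ij}\partial_t$, whence $\gamma^\mu[L_i,\partial_\mu]\psi = -\gamma^0\partial_i\psi - \gamma^i\partial_t\psi$. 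Collecting everything gives the intermediate identity
\[
L_i\gamma^\mu\mathbf D_\mu\psi = \gamma^\mu\mathbf D_\mu L_i\psi + (L_i\gamma^\mu)\mathbf D_\mu\psi - \gamma^\mu(L_i\Gamma_\mu)\psi - \gamma^0\partial_i\psi - \gamma^i\partial_t\psi,
\]
in which the two middle terms are naturally collected as $(L_i\gamma^\mu\mathbf D_\mu)\psi$, the action of $L_i$ on the matrix-valued coefficients of the Dirac operator.

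The last step is to rewrite the residual kinematic piece $-\gamma^0\partial_i\psi - \gamma^i\partial_t\psi$ in the metric-perturbation form stated in the proposition. Using the Clifford relation $\gamma^\mu\gamma^\nu+\gamma^\nu\gamma^\mu = -2g^{\mu\nu}I_4$ together with the index-lowering $\gamma_\nu = g_{\nu\mu}\gamma^\mu$, and the vierbein identity $\gamma^\mu = b^\mu_a\tilde\gamma^a$ with $g_{\mu\nu} = m_{ab}b^a_\mu b^b_\nu$, I would split each residual coefficient into a Minkowskian value plus a perturbation $g_{ij}-\delta_{ij}$ or $g_{j0}$. The Minkowskian part is absorbed into $(L_i\gamma^\mu\mathbf D_\mu)\psi$, reflecting the well-known flat-space identity that the spinor-adapted boost $\widehat L_i = L_i + \tfrac12\gamma_0\gamma_i$ commutes with $\gamma^\mu\partial_\mu$; the surviving curvature residue is precisely $(g_{ij}-\delta_{ij})\gamma^j\partial_t\psi - g_{j0}\gamma^j\partial_i\psi$.

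The main obstacle is this final algebraic rearrangement. Cleanly disentangling the flat Clifford cancellation from the curvature residue requires careful use of the decomposition $g_{\mu\nu} - m_{\mu\nu} = m_{ab}(b^a_\mu b^b_\nu - \delta^a_\mu\delta^b_\nu)$ to trade vierbein perturbations for the explicit metric perturbations $g_{ij}-\delta_{ij}$ and $g_{j0}$ appearing in the assumption \eqref{metric-decay}. Once this bookkeeping is carried out the identity follows, and the proposition is proved.
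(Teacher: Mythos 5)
Your direct Leibniz computation is clean and correct up through the intermediate identity
\begin{align*}
L_i\gamma^\mu\mathbf D_\mu\psi = \gamma^\mu\mathbf D_\mu L_i\psi + (L_i\gamma^\mu)\partial_\mu\psi - L_i(\gamma^\mu\Gamma_\mu)\psi - \gamma^0\partial_i\psi - \gamma^i\partial_t\psi,
\end{align*}
and you are right that the two middle terms are exactly what one should call $(L_i\gamma^\mu\mathbf D_\mu)\psi$. The trouble is the last step. You propose to rewrite the $O(1)$ kinematic residue $-\gamma^0\partial_i\psi - \gamma^i\partial_t\psi$ by splitting it into a ``Minkowskian part'' absorbed into $(L_i\gamma^\mu\mathbf D_\mu)\psi$ plus the metric-perturbation terms $(g_{ij}-\delta_{ij})\gamma^j\partial_t\psi - g_{j0}\gamma^j\partial_i\psi$. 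But $(L_i\gamma^\mu\mathbf D_\mu)\psi$, as you yourself defined it, is the action of $L_i$ on the \emph{coefficients} of the Dirac operator; it vanishes identically when $\gamma^\mu$ is constant and $\Gamma_\mu=0$. The residue $-\gamma^0\partial_i\psi - \gamma^i\partial_t\psi$, by contrast, does not vanish on Minkowski space; it is precisely the well-known failure of the unmodified boost $L_i$ to commute with the flat Dirac operator. So there is nothing there that can absorb it, and your proposed rearrangement, read literally, asserts a false flat-space identity. Invoking the fact that $\widehat{L_i}=L_i+\tfrac12\gamma_0\gamma_i$ commutes with $\gamma^\mu\partial_\mu$ does not fill the gap, because the proposition concerns $L_i$, not $\widehat{L_i}$, and the difference $\tfrac12\gamma_0\gamma_i$ is not accounted for anywhere in your argument.

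The paper's route is genuinely different in exactly the place where yours stalls. Rather than trying to absorb the residue, the paper actually \emph{adds and subtracts} the modifier: one writes $L_i = \widehat{L_i} - \tfrac12\gamma_0\gamma_i$, proves the curved Clifford identity $\gamma_0\gamma_i\gamma^\mu = \gamma^\mu\gamma_0\gamma_i + 2\delta^\mu_0\gamma_i - 2\gamma_0\delta^\mu_i$ directly from the relation $\gamma^\mu\gamma^\nu+\gamma^\nu\gamma^\mu = -2g^{\mu\nu}$, and observes that the Clifford piece $\tfrac12\gamma_0\gamma_i\gamma^\mu\partial_\mu\psi$ produces the terms $\gamma_i\partial_t\psi - \gamma_0\partial_i\psi$. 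The key cancellation is between these and your kinematic residue: using the index-lowering $\gamma_i = \gamma^i + (g_{ij}-\delta_{ij})\gamma^j + g_{0i}\gamma^0$ and $\gamma_0 = -\gamma^0 + g_{j0}\gamma^j$ (so that $\gamma_\nu$ and the flat $\tilde\gamma^\nu$ differ by $O(g-m)$), one gets $\gamma_i\partial_t\psi - \gamma_0\partial_i\psi - \gamma^i\partial_t\psi - \gamma^0\partial_i\psi = (g_{ij}-\delta_{ij})\gamma^j\partial_t\psi + g_{0i}\gamma^0\partial_t\psi - g_{j0}\gamma^j\partial_i\psi$. This is the computation you gesture at but never perform, and it is the actual content of the proof. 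To fix your argument you would need to explicitly introduce $\widehat{L_i}$, run the Clifford identity, and then transfer $\widehat{L_i}\psi$ back to $L_i\psi$ at the very end — which is precisely the bookkeeping the paper carries out.
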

 An obvious computation gives
\begin{align*}
    L_i\gamma^\mu\partial_\mu\psi & = (t\partial_i+x^i\partial_t)\gamma^\mu\partial_\mu\psi  \\
    & = t(\partial_i\gamma^\mu)\partial_\mu\psi+t\gamma^\mu\partial_i\partial_\mu\psi+x^i(\partial_t\gamma^\mu)\partial_\mu\psi+x^i\gamma^\mu\partial_t\partial_\mu\psi,
\end{align*}
and we also observe that 
\begin{align*}
t\gamma^\mu\partial_i\partial_\mu\psi & = t\gamma^\mu\partial_\mu\partial_i\psi \\
    & = t\gamma^0\partial_t\partial_i\psi+t\gamma^j\partial_j\partial_i\psi \\
    & = \gamma^0\partial_t(t\partial_i\psi)-\gamma^0\partial_i\psi+\gamma^j\partial_j(t\partial_i\psi) \\
    & = \gamma^\mu\partial_\mu(t\partial_i\psi)-\gamma^0\partial_i\psi,
\end{align*}
and
\begin{align*}
    x^i\gamma^\mu\partial_t\partial_\mu\psi & = x^i\gamma^\mu\partial_\mu\partial_t\psi \\
    & = x^i\gamma^0\partial_t\partial_t\psi +x^i\gamma^j\partial_j\partial_t\psi \\
    & = \gamma^0\partial_t(x^i\partial_t\psi)+\gamma^j\partial_j(x^i\partial_t\psi)-\gamma^j\delta^i_j\partial_t\psi \\
    & = \gamma^\mu\partial_\mu(x^i\partial_t\psi)-\gamma^i\partial_t\psi.
\end{align*}
Thus we conclude that the commutator of the Dirac operator with the Lorentz boost yields not only the error terms from the derivatives of the gamma matrices but also additional terms $-\gamma^0\partial_i\psi-\gamma^i\partial_t\psi$. To remedy this error term, we introduce the modified Lorentz boost $\widehat{L_i}=L_i+\frac12\gamma_0\gamma_i$. Indeed, we see that
\begin{align*}
    \gamma_0\gamma_i\gamma^\mu & = \gamma_0\gamma^\nu g_{i\nu}\gamma^\mu \\
    & = \gamma_0(-\gamma^\mu\gamma^\nu-2g^{\mu\nu}I_4)g_{i\nu} \\
    & = -\gamma_0\gamma^\mu\gamma^\nu g_{i\nu}-2\gamma_0g^{\mu\nu}g_{i\nu} \\
    & = -\gamma^\lambda g_{\lambda0}\gamma^\mu\gamma^\nu g_{i\nu}-2\gamma_0g^{\mu\nu}g_{i\nu} \\
    & = -(-\gamma^\mu\gamma^\lambda-2g^{\mu\lambda}I_{4})g_{\lambda0}\gamma^\nu g_{i\nu}-2\gamma_0\delta^\mu_i \\
    & =\gamma^\mu\gamma^\lambda\gamma^\nu g_{\lambda0}g_{i\nu}+2g^{\mu\lambda}\gamma^\nu g_{\lambda0}g_{i\nu}-2\gamma_0\delta^\mu_i \\
    & = \gamma^\mu\gamma_0\gamma_i+2\delta^\mu_0\gamma_i-2\gamma_0\delta^\mu_i,
\end{align*}
which implies the identity $\gamma_0\gamma_i\gamma^\mu\partial_\mu = \gamma^\mu\gamma_0\gamma_i\partial_\mu+2\gamma_i\partial_t-2\gamma_0\partial_i $.
Then we have
\begin{align*}
    (L_i+\frac12\gamma_0\gamma_i)\psi &= t\gamma^\mu\partial_i\partial_\mu\psi+x^i\gamma^\mu\partial_t\partial_\mu\psi+t(\partial_i\gamma^\mu)\partial_\mu\psi+x^i(\partial_t\gamma^\mu)\partial_\mu\psi \\
    & \qquad +\frac12\gamma^\mu\gamma_0\gamma_i\partial_\mu\psi +\gamma_i\partial_t\psi-\gamma_0\partial_i\psi.
\end{align*}
We observe that $\gamma_i\partial_t\psi = g_{\mu i}\gamma^\mu \partial_t\psi = \gamma^i\partial_t\psi+(g_{ij}-\delta_{ij})\gamma^j\partial_t\psi+g_{0i}\gamma^0\partial_t\psi$ and $-\gamma_0\partial_i\psi = -\gamma^\nu g_{\nu0}\partial_i\psi = \gamma^0\partial_i\psi-g_{j0}\gamma^j\partial_i$. 
Then we see that
\begin{align*}
(L_i+\frac12\gamma_0\gamma_i)\gamma^\mu\partial_\mu\psi & = \gamma^\mu\partial_\mu(L_i\psi)+\frac12\gamma^\mu\gamma_0\gamma_i\partial_\mu\psi +t(\partial_i\gamma^\mu)\partial_\mu\psi +x^i(\partial_t\gamma^\mu)\partial_\mu\psi \\
& \qquad\qquad + (g_{ij}-\delta_{ij})\gamma^j\partial_t\psi-g_{j0}\gamma^j\partial_i\psi \\
    & = \gamma^\mu\partial_\mu(L_i+\frac12\gamma_0\gamma_i)\psi +t(\partial_i\gamma^\mu)\partial_\mu\psi +x^i(\partial_t\gamma^\mu)\partial_\mu\psi-\frac12\gamma^\mu\partial_\mu(\gamma_0\gamma_i)\psi \\
    & \qquad\qquad + (g_{ij}-\delta_{ij})\gamma^j\partial_t\psi+g_{0i}\gamma^0\partial_t\psi-g_{j0}\gamma^j\partial_i\psi \\
    & = \gamma^\mu\partial_\mu(L_i+\frac12\gamma_0\gamma_i)\psi+(L_i\gamma^\mu)\partial_\mu\psi-\frac12\gamma^\mu\partial_\mu(\gamma_0\gamma_i)\psi \\
    &\qquad\qquad  + (g_{ij}-\delta_{ij})\gamma^j\partial_t\psi+g_{0i}\gamma^0\partial_t\psi-g_{j0}\gamma^j\partial_i\psi.
\end{align*}
Using the above computation, we see that
\begin{align*}
    L_i\gamma^\mu\mathbf D_\mu\psi & = L_i(\gamma^\mu\partial_\mu\psi-\gamma^\mu\Gamma_\mu\psi) \\
    & = L_i\gamma^\mu\partial_\mu\psi-L_i(\gamma^\mu\Gamma_\mu)\psi-\gamma^\mu\Gamma_\mu L_i\psi \\
    & = \widehat{L_i}\gamma^\mu\partial_\mu\psi-\gamma^\mu\Gamma_\mu L_i\psi-\frac12\gamma_0\gamma_i\gamma^\mu\partial_\mu\psi-L_i(\gamma^\mu\Gamma_\mu)\psi \\
    & = \gamma^\mu\partial_\mu \widehat{L_i}\psi-\gamma^\mu\Gamma_\mu L_i\psi-\frac12\gamma_0\gamma_i\gamma^\mu\partial_\mu\psi-L_i(\gamma^\mu\Gamma_\mu)\psi \\
    & \qquad +(L_i\gamma^\mu)\partial_\mu\psi -g^{j0}\gamma_i\partial_j\psi-\frac12\gamma^\mu\partial_\mu(\gamma_0\gamma_i)\psi \\
    & = \gamma^\mu\partial_\mu\widehat{L_i}\psi-\gamma^\mu\Gamma_\mu(L_i+\frac12\gamma_0\gamma_i)\psi+\frac12\gamma^\mu\partial_\mu\Gamma_\mu\psi-\frac12\gamma_0\gamma_i\gamma^\mu\partial_\mu\psi-L_i(\gamma^\mu\Gamma_\mu)\psi \\
    & \qquad\qquad +(L_i\gamma^\mu)\partial_\mu\psi-\frac12\gamma^\mu\partial_\mu(\gamma_0\gamma_i)\psi+ (g_{ij}-\delta_{ij})\gamma^j\partial_t\psi-g_{j0}\gamma^j\partial_i\psi  \\
    & = \gamma^\mu\mathbf D_\mu \widehat{L_i}\psi+\frac12 \gamma^\mu\Gamma_\mu\gamma_0\gamma_i\psi -\frac12 \gamma_0\gamma_i\gamma^\mu\partial_\mu\psi-\frac12\gamma^\mu\partial_\mu(\gamma_0\gamma_i)\psi + (L\gamma^\mu\mathbf D_\mu)\psi \\
    &\qquad\qquad+ (g_{ij}-\delta_{ij})\gamma^j\partial_t\psi+g_{0i}\gamma^0\partial_t\psi-g_{j0}\gamma^j\partial_i\psi \\
    & = \gamma^\mu\mathbf D_\mu L_i\psi+(L_i\gamma^\mu\mathbf D_\mu)\psi + (g_{ij}-\delta_{ij})\gamma^j\partial_t\psi+g_{0i}\gamma^0\partial_t\psi-g_{j0}\gamma^j\partial_i\psi.
\end{align*}
This completes the proof of Proposition \ref{prop-com-dirac-lorentz}.
Note that an additional $t$-factor appears only in the second term $(L_i\gamma^\mu\mathbf D_\mu)\psi$ of the last equality. We first consider $\mu=j=1,2,3$:\footnote{In fact, we only need to consider $(t\partial_i\gamma^j)\partial_j\psi$. Indeed, since $g^{00}$ is constant, the matrix $\gamma^0$ can be chosen to be a constant matrix and hence $L_i\gamma^0$ vanishes. However, we treat here the matrix $\gamma^0$ as if it is not a constant matrix for a moment in order to show that the argument does not rely on a specific condition on the gamma matrix.}
\begin{align*}
	t|(\partial_i\gamma^j)\partial_j\psi| & \le t|\partial_i\gamma^j|\left| \partial_j\psi+\frac{x^j}{t}\partial_t\psi\right|+ t|\partial_i\gamma^j| \left| \frac{x^j}{t}\partial_t\psi\right| \\
	& \le t |\partial_i\gamma^j| t^{-1}|L_j\psi|+|x||\partial_i\gamma^j||\partial_t\psi| \\
	& \lesssim |L\psi|+|\partial_t\psi|.
\end{align*}
On the other hand, for $\mu=0$, using the Hardy inequality Lemma \ref{hardy-ineq} on the hyperboloid, we have
\begin{align*}
	\|t (\partial_i\gamma^0)\partial_t\psi\|_{L^2(\Sigma_\tau)} & \lesssim \|t \langle r\rangle^{-3}\partial_t\psi\|_{L^2(\Sigma_\tau)} \\ 
	& \lesssim \sum_{i=1}^3\|t \langle r\rangle^{-2}V_i \partial_t\psi\|_{L^2(\Sigma_\tau)} \\
	& \lesssim \| \langle r\rangle^{-2}\partial_t L\psi\|_{L^2(\Sigma_\tau)} \lesssim \frac{t}{\tau} E[L\psi](\tau)^\frac12,
\end{align*}
which also becomes an acceptable error in the nonlinear problems.
Now we extend this argument in an inductive way and prove Proposition \ref{prop-dirac-reduce}. It is useful to write the above identity as follows:
\begin{align*}
	L \gamma^\mu\mathbf D_\mu\psi = \gamma^\mu\mathbf D_\mu L\psi + (L\gamma^\mu\mathbf D_\mu)\psi + (g-m)\partial\psi,
\end{align*} 
where $g$ and $m$ are the abbreviation of the metric $g$ and the Minkoswski metric, respectively. By an inductive argument, it is easily seen that for $|J|\ge2$,
\begin{align}
\begin{aligned}
L^J\gamma^\mu\mathbf D_\mu\psi 	& = \gamma^\mu\mathbf D_\mu L^J\psi+\sum_{\substack{ J_1, J_2  \\ |J_2|\le |J|-1 }}C_{J_1,J_2} (L^{J_1}\gamma^\mu\mathbf D_\mu)(L^{J_2}\psi) \\
&\qquad\qquad\qquad+\sum_{|J_1|+|J_2|=|J|}C_{J_1,J_2}'(L^{J_1}(g-m))(L^{J_2}\partial\psi).
\end{aligned}
\end{align}
By Proposition \ref{prop-com-dirac-trans}, the translation vector fields act on the Dirac operator $\gamma^\mu\mathbf D_\mu$ in the usual way as a scalar field. Hence applying the vector fields $\partial^I$, we easily see that
\begin{align}\label{eq-dirac-vf}
\begin{aligned}
\partial^IL^J\gamma^\mu\mathbf D_\mu\psi &= \gamma^\mu\mathbf D_\mu\partial^IL^J\psi +\sum_{\substack{I_1,I_2,J_1,J_2 \\ |I_2|\le |I|-1 \\ |J_2|\le |J|-1}}C^{I_1,I_2}_{J_1,J_2}(\partial^{I_1}L^{J_1}\gamma^\mu\mathbf D_\mu)(\partial^{I_2}L^{J_2}\psi) \\	
& \qquad\qquad + \sum_{\substack{ I_1,I_2,J_1,J_2 \\ |I_1|+|I_2| = |I| \\ |J_1|+|J_2| = |J|}}C^{I_1,I_2}_{J_1,J_2}(\partial^{I_1}L^{J_1}(g-m))(\partial^{I_2}L^{J_2}\partial\psi).
\end{aligned}
\end{align}
We see that an additional $t$-growth appears in the first and second summation, especially when $|J_1|\ge1$. As our previous argument concerning the commutator estimates for the Laplace-Beltrami operator $[\Box_g,L_i]$, it is not harmful. Indeed, for $|J_1|\ge1$, we see that
\begin{align*}
	& \|(\partial^{I_1}L^{J_1}\gamma^\mu\mathbf D_\mu)(\partial^{I_2}L^{J_2}\psi)\|_{L^2(\Sigma_\tau)} \\
	 & \le \|(\partial^{I_1}L^{J_1}\gamma^\mu\partial_\mu)(\partial^{I_2}L^{J_2}\psi)\|_{L^2(\Sigma_\tau)}+\|(\partial^{I_1}L^{J_1}\gamma^\mu\Gamma_\mu)(\partial^{I_2}L^{J_2}\psi)\|_{L^2(\Sigma_\tau)} \\
	& \lesssim \|t^{|J_1|}\langle r\rangle^{-2-|J_1|}\partial_t \partial^{I_2}L^{J_2}\psi\|_{L^2(\Sigma_\tau)}+\sum_{j=1}^3\|t^{|J_1|}\langle r\rangle^{-2-|J_1|}\partial_j \partial^{I_2}L^{J_2}\psi\|_{L^2(\Sigma_\tau)}.
\end{align*}
Applying the Hardy inequality Lemma \ref{hardy-ineq} $|J_1|$-times, we obtain up to an obvious error term,
\begin{align*}
	\|(\partial^{I_1}L^{J_1}\gamma^\mu\mathbf D_\mu)(\partial^{I_2}L^{J_2}\psi)\|_{L^2(\Sigma_\tau)} & \lesssim \| \langle r\rangle^{-2}\partial_t\partial^{I_2}L^{J_1}L^{J_2}\psi\|_{L^2(\Sigma_\tau)}+\sum_{j=1}^3\| \langle r\rangle^{-2}\partial_j\partial^{I_2}L^{J_1}L^{J_2}\psi\|_{L^2(\Sigma_\tau)}.
\end{align*}
The identical argument is applied to the second summation of the right-hand side of \eqref{eq-dirac-vf}, and hence we obtain
\begin{align}\label{eq-reduced}
\|	\partial^IL^J\gamma^\mu\mathbf D_\mu\psi\|_{L^2(\Sigma_\tau)} & \lesssim \|\gamma^\mu\mathbf D_\mu\partial^IL^J\psi\|_{L^2(\Sigma_\tau)}+\sum_{|I_1|\le |I|}\|\partial \partial^{I_1}L^J\psi\|_{L^2(\Sigma_\tau)}.
\end{align}
Finally, we recall that $\mathbf D_\mu= \partial_\mu-\Gamma_\mu$ and observe that the term $\|\gamma^\mu\Gamma_\mu\partial^IL^J\psi\|_{L^2(\Sigma_\tau)}$ can be absorbed into the summation of the right-hand side of the identity \eqref{eq-reduced}, which completes the proof of Proposition \ref{prop-dirac-reduce}.


\section{Main estimates: Bootstrap argument}\label{sec:main-proof}
This section is the main part of the paper. We establish the global existence of the solutions to the cubic Dirac equation and the Dirac-Klein-Gordon systems via the bootstrap argument.
\subsection{Framework of the bootstrap argument}
As an expository setup, we consider the inhomogeneous Dirac equation $(-i\gamma^\mu\mathbf D_\mu+M)\psi = F$. By Proposition \ref{prop-squaring-dirac}, we obtain the following nonlinear wave-type equation:
\begin{align*}
	\left( \Box_g-M^2\right)\psi = -2\Gamma^\mu\partial_\mu\psi+V\psi -i \gamma^\mu\mathbf D_\mu F- MF,
\end{align*}
where $V$ is the potential satisfying the regularity condition $|\partial_x^\alpha V(x)|\le c_\alpha\langle r\rangle^{-3-|\alpha|}$ for $|\alpha|\le N$, and the constants $c_\alpha$ is sufficiently small.

In what follows, we make the bootstrap assumptions: for $N\ge11$, the spinor field $\psi$ satisfies
\begin{align}\label{bootstrap-spinor}
\begin{aligned}
E_M[\partial^IL^J\psi](\tau)^\frac12 & \le C\varepsilon \tau^{\frac12+(|I'|+|J|)\delta}, \quad N-3\le |I|+|J|\le N,	\\
E_M[\partial^IL^J\psi](\tau)^\frac12 & \le C\varepsilon \tau^{(|I'|+|J|)\delta}, \quad  |I|+|J|\le N-4,
\end{aligned}
\end{align}
and for the scalar field $\phi$ we make the similar assumptions:
\begin{align}\label{bootstrap-scalar}
\begin{aligned}
E_m[\partial^IL^J\phi](\tau)^\frac12 & \le C\varepsilon \tau^{\frac12+(|I'|+|J|)\delta}, \quad N-3\le |I|+|J|\le N,	\\
E_m[\partial^IL^J\phi](\tau)^\frac12 & \le C\varepsilon \tau^{(|I'|+|J|)\delta}, \quad  |I|+|J|\le N-4,
\end{aligned}
\end{align}
where the energy $E_c[\psi]$ is defined by
\begin{align*}
	E_c[\phi](\tau) = \int_{\Sigma_{\tau}}|\partial_t\phi|^2+|\partial_x\phi|^2+2\frac{x^j}{t}\Re(\partial_j\phi\overline{\partial_t\phi})+c^2|\phi|^2\,dx,
\end{align*}
and we put $\partial^I=\partial_t^{I'}\partial_x^{I''}$. Now we apply a product of vector fields $\partial^IL^J$ to the both sides of the nonlinear equation and get
\begin{align}\label{eq-kg-vfs}
(\Box_g-M^2)\partial^IL^J\psi = [\partial^IL^J,\Box_g]\psi -\partial^IL^J\Gamma^\mu\partial_\mu\psi +\partial^IL^JV\psi -i\partial^IL^J\gamma^\mu\mathbf D_\mu F-M\partial^IL^JF	.
\end{align}
In view of the identity \eqref{eq-kg-vfs}, regardless of the nonlinear term $F$, one has to take in account the linear terms $\|\partial^IL^J\Gamma^\mu\partial_\mu\psi\|_{L^2(\Sigma_\tau)}$ and $\|\partial^IL^JV\psi\|_{L^2(\Sigma_\tau)}$ and gain at least $\tau^{-1}$ to close the bootstrap argument of nonlinear problems for Dirac equations. Indeed, we have the following estimates:
\begin{prop}\label{prop-est-linear}
    Under the bootstrap assumptions \eqref{bootstrap-spinor}, we have
    \begin{align}
\int_{\tau_0}^\tau \|\partial^IL^J\Gamma^\mu\partial_\mu\psi\|_{L^2(\Sigma_{\tau'})}\,d\tau' +\int_{\tau_0}^\tau \|\partial^IL^JV\psi\|_{L^2(\Sigma_{\tau'})}\,d\tau' \le \frac14 C\varepsilon \tau^{\frac12+(|I'|+|J|)\delta},
    \end{align}
    where $\partial^I = \partial_t^{I'}\partial_x^{I''}$ and $|I|+|J|\le N$.
\end{prop}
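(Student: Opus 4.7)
My plan is to attack both integrals by a Leibniz expansion followed by repeated use of the hyperboloidal Hardy inequality (Lemma~\ref{hardy-ineq}), in the same spirit as the commutator estimates of Section~\ref{sec:commutators}. Writing
\begin{align*}
\partial^I L^J(V\psi)=\sum_{\substack{I_1+I_2=I\\J_1+J_2=J}}C^{I_1,I_2}_{J_1,J_2}\,(\partial^{I_1}L^{J_1}V)(\partial^{I_2}L^{J_2}\psi),
\end{align*}
and similarly for $\partial^I L^J(\Gamma^\mu\partial_\mu\psi)$, each product is controlled via the pointwise coefficient bounds
\begin{align*}
|\partial^{I_1}L^{J_1}V(x)|\le c\,t^{|J_1|}\langle r\rangle^{-3-|I_1|-|J_1|},\qquad |\partial^{I_1}L^{J_1}\Gamma^\mu(x)|\le c\,t^{|J_1|}\langle r\rangle^{-2-|I_1|-|J_1|},
\end{align*}
where $c$ is the small constant arising from $c^{lr}_\alpha,c^{sr}_\alpha$ in \eqref{metric-decay}, combined with $|V|\lesssim|\partial^2 g|+|\partial g|^2$ and $|\Gamma_\mu|\lesssim|\partial g|$ established in Section~\ref{sec:dirac-operator}.

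The core step is to absorb the growth $t^{|J_1|}$ that appears whenever Lorentz boosts land on the coefficient. Applying Hardy's inequality $|J_1|$ times, exactly as in Proposition~\ref{prop-error-box}, trades $|J_1|$ of the weights $\langle r\rangle^{-1}$ for $t^{-1}L$ acting on the $\psi$-factor, giving schematically
\begin{align*}
\|(\partial^{I_1}L^{J_1}V)(\partial^{I_2}L^{J_2}\psi)\|_{L^2(\Sigma_\tau)}&\lesssim c\,\|\langle r\rangle^{-3-|I_1|}L^{|J_1|}\partial^{I_2}L^{J_2}\psi\|_{L^2(\Sigma_\tau)},\\
\|(\partial^{I_1}L^{J_1}\Gamma^\mu)(\partial^{I_2}L^{J_2}\partial_\mu\psi)\|_{L^2(\Sigma_\tau)}&\lesssim c\,\|\langle r\rangle^{-2-|I_1|}L^{|J_1|}\partial^{I_2}L^{J_2}\partial_\mu\psi\|_{L^2(\Sigma_\tau)}
\end{align*}
up to lower-order commutator errors. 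Since $t\ge\tau$ on $\Sigma_\tau$, one further application of Hardy extracts an additional $\tau^{-1}$-factor at the cost of a single extra $L$-derivative on $\psi$; this $\tau^{-1}$-gain is precisely what is needed to counteract the $\tau^{1/2}$-growth of the bootstrap energy \eqref{bootstrap-spinor}.

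The $\partial_\mu\psi$ factor in the $\Gamma^\mu$-term is then re-expressed using the hyperboloidal energy: for $\mu=0$ one writes $\partial_t\psi=(t/\tau)(\tau/t)\partial_t\psi$, and for $\mu=j$ one decomposes $\partial_j\psi=(\partial_j+\tfrac{x^j}{t}\partial_t)\psi-\tfrac{x^j}{t}\partial_t\psi$; the multipliers $\langle r\rangle^{-2}(t/\tau)$ and $\langle r\rangle^{-2}(|x|/\tau)$ are uniformly bounded in $L^\infty(\Sigma_\tau)$ thanks to $t\le\tau+r$, so the weighted $L^2$-norm is controlled by $E_M^{1/2}$. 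When $|I_2|+|J_2|$ is small enough that the Klainerman–Sobolev inequality (Lemma~\ref{KS-ineq}) is available, an alternative route places $\partial^{I_2}L^{J_2}\partial_\mu\psi$ in $L^\infty$ with decay $t^{-3/2}$, paired with an $L^2$-bound on the coefficient. In each case the integrand is bounded by $c\,C\varepsilon\,\tau^{-1/2+(|I'|+|J|)\delta}$ (slowly growing factors $\tau^{C\delta}$ produced by the extra $L$-derivatives are absorbed using the slow growth exponent $\delta\in[\frac{1}{10N},\frac{1}{5N}]$ together with the smallness of $c$), whose integration over $[\tau_0,\tau]$ yields $\tfrac14 C\varepsilon\,\tau^{1/2+(|I'|+|J|)\delta}$ provided $c^{lr}_\alpha,c^{sr}_\alpha$ are chosen sufficiently small in $\epsilon_0(N,g)$.

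The main obstacle is the combinatorial bookkeeping of the Leibniz expansion, coupled with the careful balancing of the derivative budget $|I|+|J|\le N$: the extra Hardy applications add $L$-derivatives to $\psi$, and these must be kept within the range covered by the bootstrap. The most delicate configuration is the top-order case $|I|+|J|=N$ with all Lorentz boosts falling on $V$ or $\Gamma$, where the additional Hardy application is borderline; here one exploits the full decay $\langle r\rangle^{-3}$ of $V$ (and $\langle r\rangle^{-2}$ of $\Gamma$) after the $|J_1|$-fold Hardy application, together with the smallness of $c$ extracted from $c^{lr}_\alpha,c^{sr}_\alpha$, to close the bound.
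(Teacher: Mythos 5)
Your proposal is correct and follows essentially the same route as the paper's proof: a Leibniz expansion, pointwise coefficient bounds with the $t^{|J_1|}$-growth from Lorentz boosts hitting the stationary coefficients, repeated use of the hyperboloidal Hardy inequality (Lemma~\ref{hardy-ineq}) in the style of Proposition~\ref{prop-error-box} to trade $\langle r\rangle^{-1}$ for $t^{-1}L$, the good/bad derivative decomposition $\partial_j=(\partial_j+\tfrac{x^j}{t}\partial_t)-\tfrac{x^j}{t}\partial_t$ together with the energy weights $\|\tfrac{\tau}{t}\partial_t\cdot\|_{L^2}$ and $\|(\partial_j+\tfrac{x^j}{t}\partial_t)\cdot\|_{L^2}$, and finally integrating $\tau^{-1/2+O(\delta)}$ and absorbing into $\frac14C\varepsilon\tau^{1/2+\ldots}$ via the smallness of the metric constants. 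The one point you should make explicit is the exact budget of $L$-derivatives and $\partial_t$-derivatives after the Hardy and good-derivative steps: the paper's trick of writing $t^{-1}\|\partial_t\partial^{I-1}L^{J+1}\psi\|\le\tau^{-1}E_M[\partial^{I-1}L^{J+1}\psi]^{1/2}$ preserves both $|I'|+|J|$ (hence the bootstrap exponent) and the total order $|I|+|J|$, and the purely spatial case $\partial^I=\partial_x^I$ needs the separate treatment the paper gives; your proposal gestures at this bookkeeping but does not nail down the invariance of $|I'|+|J|$, which is the reason the bootstrap closes with the stated exponent.
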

We postpone the proof to the end of this section.
Thus, combined with Proposition \ref{prop-error-box}, the remaining task is to control the nonlinearity.
From now on we concentrate ourselves to the cubic Dirac and the Dirac-Klein-Gordon systems, given by
\begin{align}
\begin{aligned}
	(-i\gamma^\mu\mathbf D_\mu+M)\psi = (\psi^\dagger\gamma^0\psi)\psi, \\
	\psi|_{t=t_0} := \psi_0,
	\end{aligned}
\end{align}
and
\begin{align}
\begin{aligned}
	(-i\gamma^\mu\mathbf D_\mu+M)\psi = \phi\psi, \\
	(\Box_g-m^2)\phi = \psi^\dagger\gamma^0\psi, \\
	(\psi,\phi,\partial_t\phi)|_{t=t_0}:= (\psi_0,\phi_0,\phi_1),
	\end{aligned}
\end{align}
where we set the initial time to be $t_0=2$.
By squaring the Dirac operator, Proposition \ref{prop-squaring-dirac} yields the following nonlinear wave-type equations:
\begin{align}\label{cubic-kg}
\begin{aligned}
	(\Box_g-M^2)\psi &= -2\Gamma^\mu\partial_\mu\psi +V\psi -i\gamma^\mu\mathbf D_\mu[(\psi^\dagger\gamma^0\psi)\psi]-M(\psi^\dagger\gamma^0\psi)\psi,\\
	\psi|_{t=t_0} &:= \psi_0,
\end{aligned}	
\end{align}
and
\begin{align}\label{DKG-kg}
\begin{aligned}
	(\Box_g-M^2)\psi &= -2\Gamma^\mu\partial_\mu\psi +V\psi -i\gamma^\mu\mathbf D_\mu(\phi\psi)-M\phi\psi,\\
    (\Box_g-m^2)\phi & = \psi^\dagger\gamma^0\psi , \\
(\psi,\phi,\partial_t\phi)|_{t=t_0}&:= (\psi_0,\phi_0,\phi_1).
\end{aligned}	
\end{align}
Note that the data $\partial_t\psi|_{t=t_0}$ has already been determined from the original first-order equation.

In what follows, we consider both the cubic Dirac \eqref{cubic-kg} and the Dirac-Klein-Gordon systems \eqref{DKG-kg} in a comprehensive way.
Let $\psi$ and $(\psi,\phi)$ be local-in-time solutions to the Cauchy problem associated to the cubic problem or the quadratic system, respectively. We set the initial hyperbolic time $\tau_0=2$.
A standard local analysis ensures to construct a local-in-time solution from the data given on the initial hyperboloid $\Sigma_{\tau_0}$ and for all $|I|+|J|\le N$,
\begin{align}
    E_M[\partial^IL^J\psi](\tau_0)^\frac12+E_m[\partial^IL^J\phi](\tau_0)^\frac12 \le C_0\varepsilon,
\end{align}
for some absolute constant $C_0>0$. We refer the readers to Section 11 of \cite{flochma1} for the details.
Now we make the bootstrap assumptions. For some hyperbolic time interval $[\tau_0,\tau_1]$, we suppose that the following energy inequalities hold on the interval $[\tau_0,\tau_1]$:
\begin{align*}
	E_M[\partial^IL^J\psi](\tau)^\frac12 &\le C\varepsilon \tau^{\frac12+(|I'|+|J|)\delta}, \quad N-3\le |I|+|J|\le N, \\
	E_M[\partial^IL^J\psi](\tau)^\frac12 &\le C\varepsilon \tau^{(|I'|+|J|)\delta}, \quad  |I|+|J|\le N-4, \\
    E_m[\partial^IL^J\phi](\tau)^\frac12 & \le C\varepsilon \tau^{\frac12+(|I'|+|J|)\delta}, \quad N-3\le |I|+|J|\le N,	\\
E_m[\partial^IL^J\phi](\tau)^\frac12 & \le C\varepsilon \tau^{(|I'|+|J|)\delta}, \quad  |I|+|J|\le N-4,
\end{align*}
where we put $\partial^I = \partial_t^{I'}\partial_x^{I''}$ and we fix $\frac{1}{10N}\le\delta\le \frac{1}{5N}$.
We let $\tau_*=\sup\{ \tau_1 : \textrm{ the bootstrap assumptions hold on }[\tau_0,\tau_1] \}$. By choosing $C>4C_0$, we have $\tau^*\ge2$. By Proposition \ref{prop-dirac-reduce}, in order to prove Theorem \ref{thm-global-cubic} and Theorem \ref{thm-global-dkg} via the bootstrap argument, we are only left to prove the following estimates:
\begin{prop}\label{prop-cubic-est}
    Under the bootstrap assumptions \eqref{bootstrap-spinor}, we have for $|I|+|J|\le N$,
    \begin{align}
        \sum_{|I_1|\le |I|}\int_{\tau_0}^\tau \|\partial_\mu\partial^{I_1}L^J(\psi^\dagger\gamma^0\psi)\psi\|_{L^2(\Sigma_{\tau'})}\,d\tau' \le \frac12 C\varepsilon \tau^{\frac12+(|I'|+|J|)\delta}, \quad \mu=0,1,2,3.
    \end{align}
\end{prop}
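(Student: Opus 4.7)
The plan is to expand the cubic nonlinearity via Leibniz and reduce to trilinear estimates through an $L^\infty\!\times\!L^\infty\!\times\!L^2$ H\"older splitting on $\Sigma_\tau$. Distributing $\partial_\mu\partial^{I_1}L^J$ across the three spinor factors of $(\psi^\dagger\gamma^0\psi)\psi$ produces a finite sum of trilinear terms of the form
\[
\partial^{A_1}L^{B_1}\psi^\dagger\,\gamma^0\,\partial^{A_2}L^{B_2}\psi\cdot\partial^{A_3}L^{B_3}\psi,
\]
with $|A_1|+|A_2|+|A_3|\le|I_1|+1$ and $|B_1|+|B_2|+|B_3|\le|J|$, so the total number of derivatives landing on the three factors is at most $N+1$. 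Relabelling so that the third factor carries the most derivatives, a counting argument places the two smaller factors well within the range where Lemma \ref{KS-ineq} can be applied with room to spare.

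For the two low-order factors I would invoke the Klainerman-Sobolev inequality of Lemma \ref{KS-ineq},
\[
\|\partial^{A_k}L^{B_k}\psi\|_{L^\infty(\Sigma_\tau)}\lesssim t^{-3/2}\sum_{|K|\le 2}\|L^K\partial^{A_k}L^{B_k}\psi\|_{L^2(\Sigma_\tau)},
\]
and then commute $L^K$ past $\partial^{A_k}$, the commutators producing only lower-order derivatives that fit into the same scheme. The resulting $L^2$ norms are controlled via the mass term $M^2|\psi|^2$ in $E_M$ and the bootstrap assumption \eqref{bootstrap-spinor}, yielding a pointwise bound of the product of the two low-order factors by $t^{-3}(C\varepsilon)^2\tau^{c_1\delta}$ for some $c_1$ proportional to $N$. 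The top-order factor is kept in $L^2(\Sigma_\tau)$; when its total order does not exceed $N$ the bootstrap at level $N$ directly gives $\|\cdot\|_{L^2(\Sigma_\tau)}\lesssim C\varepsilon\tau^{1/2+c_2\delta}$, again through the mass term of $E_M$.

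The borderline case occurs when the top-order factor has total order exactly $N+1$; by the derivative count this forces both companion factors to be undifferentiated copies of $\psi$. In that event I write $\partial^{A_3}L^{B_3}=\partial_\nu\partial^{A_3'}L^{B_3}$ with $|A_3'|+|B_3|\le N$ (possible since $|A_3|=|I_1|+1\ge 1$), and bound $\|\partial_\nu\partial^{A_3'}L^{B_3}\psi\|_{L^2(\Sigma_\tau)}$ by $\sup_{\Sigma_\tau}(t/\tau)\cdot E_M[\partial^{A_3'}L^{B_3}\psi]^{1/2}$, using the structure $\|\tfrac{\tau}{t}\partial_t(\cdot)\|_{L^2}\le E_M^{1/2}$ built into the energy. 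On the support $\{|x|<t-1\}\cap\Sigma_\tau$ one has $t/\tau\lesssim\tau$, and this extra $\tau$ factor is harmlessly absorbed by the surplus $t^{-3}\cdot t\le\tau^{-2}$ coming from the two bare $\psi$ factors.

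Collecting the pieces and using $t\ge\tau$ on $\Sigma_\tau$, each trilinear contribution has $L^2(\Sigma_\tau)$ norm bounded by $(C\varepsilon)^3\tau^{-5/2+c_3N\delta}$ in the generic case and by $(C\varepsilon)^3\tau^{-3/2+c_3N\delta}$ in the borderline regime. Integration in $\tau'\in[\tau_0,\tau]$ then produces a time-integrated bound that is itself uniformly bounded (generic case) or at most $(C\varepsilon)^3\tau^{-1/2+c_3N\delta}$ (borderline case); for $\delta\le 1/(5N)$ and $\varepsilon$ small enough that $(C\varepsilon)^2\ll 1$, either expression is strictly dominated by $\tfrac12 C\varepsilon\tau^{1/2+(|I'|+|J|)\delta}$, as required. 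The main obstacle I anticipate is precisely the borderline order-$(N+1)$ case, which is resolved by the extra-derivative extraction trick combined with the slack provided by the two undifferentiated copies of $\psi$ carrying full Klainerman-Sobolev decay.
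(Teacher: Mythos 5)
Your argument is correct and follows essentially the same route as the paper's proof: Leibniz expansion of the cubic term, an $L^2\times L^\infty\times L^\infty$ H\"older split on $\Sigma_\tau$, Klainerman--Sobolev (Lemma \ref{KS-ineq}) for the two low-order factors, and the energy structure $\|\tfrac{\tau}{t}\partial_t(\cdot)\|_{L^2}\le E_M^{1/2}$ (together with $\|(\partial_j+\tfrac{x^j}{t}\partial_t)(\cdot)\|_{L^2}\le E_M^{1/2}$ for the spatial case, which you omit but which is needed when $\nu\ne 0$) to absorb the extra derivative on the top factor while keeping its energy at level $\le N$. The paper simply puts $\partial_\mu$ directly on the highest-order factor from the outset, so your ``borderline order-$(N+1)$'' case is, in effect, the generic case there; the mechanism and the final smallness argument in $\varepsilon$ are the same.
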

\begin{prop}\label{prop-dkg-est}
    Under the bootstrap assumptions \eqref{bootstrap-spinor}, \eqref{bootstrap-scalar}, we have for $|I|+|J|\le N$,
    \begin{align}
        \begin{aligned}
          \sum_{|I_1|\le|I|}  \int_{\tau_0}^\tau \|\partial_\mu\partial^{I_1}L^J (\phi\psi)\|_{L^2(\Sigma_{\tau'})}\,d\tau' &\le \frac12 C\varepsilon \tau^{\frac12+(|I'|+|J|)\delta}, \\
            \sum_{|I_1|\le|I|}  \int_{\tau_0}^\tau \|\partial_\mu\partial^{I_1}L^J (\psi^\dagger\gamma^0\psi)\|_{L^2(\Sigma_{\tau'})}\,d\tau' &\le \frac12 C\varepsilon \tau^{\frac12+(|I'|+|J|)\delta},
        \end{aligned}
    \end{align}
    with $\mu=0,1,2,3$.
\end{prop}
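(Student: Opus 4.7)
The plan is a standard $L^\infty$--$L^2$ bilinear estimate based on Leibniz, the Klainerman--Sobolev inequality (Lemma \ref{KS-ineq}), and the bootstrap assumptions \eqref{bootstrap-spinor}, \eqref{bootstrap-scalar}. For each fixed $I_1$ with $|I_1|\le|I|$, I would first expand by Leibniz
\[
\partial_\mu\partial^{I_1}L^J(\phi\psi)=\sum C_{I_a,I_b,J_a,J_b}\bigl(\partial^{I_a}L^{J_a}\phi\bigr)\bigl(\partial^{I_b}L^{J_b}\psi\bigr),
\]
with $|I_a|+|I_b|\le|I_1|+1\le N+1$ and $|J_a|+|J_b|\le|J|$; the commutators $[L_i,\partial_j]=-\delta_{ij}\partial_t$ and $[L_i,\partial_t]=-\partial_i$ produce strictly lower-order terms that are easier. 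In each bilinear term, by pigeonhole one factor (the \emph{low-order} one, say the $\phi$-factor) has $|I_a|+|J_a|\le\lfloor(N+1)/2\rfloor$, while the other (the \emph{high-order} factor) may have vector-field order up to $N+1$.

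Next, I would apply Klainerman--Sobolev to the low-order factor. After commuting $L^K$ with $|K|\le 2$ past $\partial^{I_a}$ (which preserves the total number of $\partial$'s modulo lower-order terms), the resulting $L^2$ norm is covered by the bootstrap, giving
\[
\bigl|\partial^{I_a}L^{J_a}\phi(t,x)\bigr|\lesssim t^{-3/2}\,C\varepsilon\,\tau^{1/2+(|I_a'|+|J_a|+2)\delta}
\]
in the worst case; via $t\ge\tau$ on $\Sigma_\tau$, this gives the $L^\infty$ bound $\lesssim \tau^{-1+(|I_a'|+|J_a|+2)\delta}C\varepsilon$. The high-order factor is controlled in $L^2$ by the bootstrap when $|I_b|+|J_b|\le N$; when $|I_b|+|J_b|=N+1$, I would exhibit the extra derivative outside as $\partial\cdot\partial^{I_b'}L^{J_b}$ with $|I_b'|+|J_b|=N$ and invoke the definition of the hyperboloidal energy. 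In either case, $\|\partial^{I_b}L^{J_b}\psi\|_{L^2(\Sigma_{\tau'})}\lesssim C\varepsilon\,\tau'^{1/2+(|I_b'|+|J_b|)\delta}$.

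Multiplying and integrating in $\tau'$ via $\int_{\tau_0}^{\tau}\tau'^{-1/2+a}\,d\tau'\lesssim\tau^{1/2+a}$ then yields
\[
\sum_{|I_1|\le|I|}\int_{\tau_0}^{\tau}\|\partial_\mu\partial^{I_1}L^J(\phi\psi)\|_{L^2(\Sigma_{\tau'})}\,d\tau'\lesssim(C\varepsilon)^2\,\tau^{1/2+(|I'|+|J|+C')\delta},
\]
for some absolute $C'$ coming from the Leibniz expansion and the $|K|\le 2$ in Klainerman--Sobolev. Using $\delta\le 1/(5N)$ with $N\ge 11$, the excess $\tau^{C'\delta}$ is negligible, and for $\varepsilon$ small enough that $C_1 C\varepsilon\le 1/2$, this closes the bootstrap. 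The $\psi^\dagger\gamma^0\psi$ estimate follows identically with both factors being $\psi$; the smooth bounded matrix $\gamma^0$ is absorbed into the constants.

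The main technical obstacle is the bookkeeping of the $\delta$-exponents: one must carefully track how many $\partial_t$'s land on each factor after Leibniz, how commutators $[L_i,\partial_j]$ generate extra $\partial_t$'s, and whether the $L^K$ from Klainerman--Sobolev pushes us into the high-growth range of the bootstrap (which it does when $N=11$, since $\lfloor(N+1)/2\rfloor+2>N-4$). The saving grace is the strong $t^{-3/2}$ pointwise decay from Klainerman--Sobolev: after integration this leaves a net $\tau^{1/2}$, providing ample room to absorb the $O(\delta)$ excess from derivative distribution without invoking any null condition on the nonlinearity.
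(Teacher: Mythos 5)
Your overall strategy (Leibniz, Klainerman--Sobolev on the low-order factor, bootstrap on the high-order factor, integrate) is the same as the paper's, but the way you handle the extra derivative $\partial_\mu$ creates a genuine gap, which you partially notice but misdiagnose. By absorbing $\partial_\mu$ into the Leibniz expansion you allow the low-order factor to carry up to $\lfloor(N+1)/2\rfloor$ vector fields; after adding the $|K|\le 2$ boosts from Lemma \ref{KS-ineq} this can land in the high-growth bootstrap range $N-3\le |I|+|J|\le N$ (indeed $\lfloor(N+1)/2\rfloor+2=N-3$ exactly at $N=11$), so the $L^\infty$ estimate picks up an extra factor $\tau^{1/2}$ on top of $t^{-3/2}$. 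Combined with the $\tau^{1/2}$ from the high-order $L^2$ factor and integrated against $t^{-3/2}\lesssim\tau^{-3/2}$, you arrive at a bound of size $(C\varepsilon)^2\tau^{1/2+(|I'|+|J|+C')\delta}$ with $C'\ge 3$. Your claim that ``the excess $\tau^{C'\delta}$ is negligible'' is not correct here: the target of Proposition \ref{prop-dkg-est} is itself $\frac12 C\varepsilon\,\tau^{1/2+(|I'|+|J|)\delta}$, so the two $\tau^{1/2}$ factors cancel and you are left needing $C\varepsilon\,\tau^{C'\delta}\le\frac12$ uniformly in $\tau$, which fails as $\tau\to\infty$. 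There is no free $\tau^{1/2}$ of room; the bootstrap does not close with your bookkeeping.

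The paper's proof avoids this by \emph{not} distributing $\partial_\mu$ via Leibniz: it first expands $\partial^{I}L^{J}(\phi\psi)$ (total order $\le N$), assumes $|I_1|+|J_1|\ge|I_2|+|J_2|$, and then places $\partial_\mu$ explicitly on the high-order factor, writing the term as $(\partial_\mu\partial^{I_1}L^{J_1}\phi)(\partial^{I_2}L^{J_2}\psi)$. The low-order factor then carries at most $\lfloor N/2\rfloor\le 5$ vector fields (for $N=11$), and $+2$ from Klainerman--Sobolev gives $\le 7=N-4$, so it stays in the \emph{low-growth} regime $E^{1/2}\le C\varepsilon\,\tau^{(\cdot)\delta}$ without a $\tau^{1/2}$. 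The high-order factor with the external $\partial_\mu$ is handled through the energy structure: $\|\partial_t u\|_{L^2(\Sigma_\tau)}\le\tfrac{t}{\tau}E[u]^{1/2}$ for $\mu=0$, and $\partial_j=(\partial_j+\tfrac{x^j}{t}\partial_t)-\tfrac{x^j}{t}\partial_t$ for $\mu=j$, so no vector field count exceeds $N$. The net integrand is then $t^{-1/2}\tau^{-1/2+(N+2)\delta}\lesssim\tau^{-1+(N+2)\delta}$ (using $t\ge\tau$), integrating to $\tau^{(N+2)\delta}\ll\tau^{1/2}$, with comfortable room to close. In short: your outline is right, but the placement of $\partial_\mu$ on the high-order factor is essential, not a cosmetic choice, for $N$ near the stated threshold.
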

Combining with the linear estimates Proposition \ref{prop-error-box} and Proposition \ref{prop-est-linear}, the nonlinear estimates Proposition \ref{prop-cubic-est} and Proposition \ref{prop-dkg-est} imply that the bootstrap argument can be closed and $\tau^*=+\infty$. In the remainder of this paper, we focus on the proof of Proposition \ref{prop-cubic-est}, Proposition \ref{prop-dkg-est}, and Proposition \ref{prop-est-linear}.
\subsection{Proof of Proposition \ref{prop-cubic-est}}
It suffices to deal with the case $|I_1|=|I|$ in the summation. Then
we write for $\mu=0,1,2,3$,
\begin{align*}
    \int_{\tau_0}^\tau \|\partial_\mu \partial^IL^J [(\psi^\dagger\gamma^0\psi)\psi]\|_{L^2(\Sigma_{\tau'})}\,d\tau ' & \le \sum_{\substack{I_1,I_2,I_3 \\ J_1,J_2,J_3}} \int_{\tau_0}^\tau \|(\partial_\mu \partial^{I_1}L^{J_1}\psi)^\dagger\gamma^0(\partial^{I_2}L^{J_2}\psi)(\partial^{I_3}L^{J_3}\psi)\|_{L^2(\Sigma_{\tau'})}\,d\tau',
\end{align*}
where we may assume that $|I_1|+|J_1|\ge |I_2|+|J_2|\ge |I_3|+|J_3|$. Then we see that
\begin{align*}
   & \sum_{\substack{I_1,I_2,I_3 \\ J_1,J_2,J_3}} C^{I_1,I_2,I_3}_{J_1,J_2,J_3}\int_{\tau_0}^\tau \|(\partial_\mu \partial^{I_1}L^{J_1}\psi)^\dagger\gamma^0(\partial^{I_2}L^{J_2}\psi)(\partial^{I_3}L^{J_3}\psi)\|_{L^2(\Sigma_{\tau'})}\,d\tau' \\
   & \le \sum_{\substack{I_1,I_2,I_3 \\ J_1,J_2,J_3}} C^{I_1,I_2,I_3}_{J_1,J_2,J_3}\sum_{j=1}^3\int_{\tau_0}^\tau \|(\partial_j\partial^{I_1}L^{J_1}\psi)^\dagger\gamma^0(\partial^{I_2}L^{J_2}\psi)(\partial^{I_3}L^{J_3}\psi)\|_{L^2(\Sigma_{\tau'})}\,d\tau' \\
   & \qquad + \sum_{\substack{I_1,I_2,I_3 \\ J_1,J_2,J_3}} C^{I_1,I_2,I_3}_{J_1,J_2,J_3}\int_{\tau_0}^\tau \|(\partial_t \partial^{I_1}L^{J_1}\psi)^\dagger\gamma^0(\partial^{I_2}L^{J_2}\psi)(\partial^{I_3}L^{J_3}\psi)\|_{L^2(\Sigma_{\tau'})}\,d\tau'.
\end{align*}
Now we apply in order the H\"older inequality, the Klainerman-Sobolev inequality Lemma \ref{KS-ineq} on the hyperboloid and the bootstrap assumptions \eqref{bootstrap-spinor} to get
\begin{align*}
 &   \sum_{\substack{I_1,I_2,I_3 \\ J_1,J_2,J_3}} C^{I_1,I_2,I_3}_{J_1,J_2,J_3}\int_{\tau_0}^\tau \|(\partial_t \partial^{I_1}L^{J_1}\psi)^\dagger\gamma^0(\partial^{I_2}L^{J_2}\psi)(\partial^{I_3}L^{J_3}\psi)\|_{L^2(\Sigma_{\tau'})}\,d\tau' \\
    & \le C' \int_{\tau_0}^\tau \|\partial_t\partial^{I_1}L^{J_1}\psi\|_{L^2(\Sigma_{\tau'})}\|\partial^{I_2}L^{J_2}\psi\|_{L^\infty(\Sigma_{\tau'})}\|\partial^{I_3}L^{J_3}\psi\|_{L^\infty(\Sigma_{\tau'})}\,d\tau' \\
    & \le C' M^{-2} \int_{\tau_0}^\tau \frac{t}\tau C\varepsilon\tau^{\frac12+(|I_1'|+|J_1|)\delta}t^{-3} C^2\varepsilon^2 \tau^{(|I_2'|+|J_2|+|I_3'|+|J_3|+4)\delta}\,d\tau' \\
    & \le \frac12C'C^3\varepsilon^3M^{-2},
\end{align*}
where we used the fact that $\|\partial_t\psi\|_{L^2(\Sigma_\tau)}\le \frac{t}{\tau}E[\psi](\tau)^\frac12$.
For $\mu=j$, we similarly have
\begin{align*}
&    \sum_{\substack{I_1,I_2,I_3 \\ J_1,J_2,J_3}} C^{I_1,I_2,I_3}_{J_1,J_2,J_3}\sum_{j=1}^3\int_{\tau_0}^\tau \|(\partial_j\partial^{I_1}L^{J_1}\psi)^\dagger\gamma^0(\partial^{I_2}L^{J_2}\psi)(\partial^{I_3}L^{J_3}\psi)\|_{L^2(\Sigma_{\tau'})}\,d\tau' \\
& \le 3C' \int_{\tau_0}^\tau \|\partial_j\partial^{I_1}L^{J_1}\psi\|_{L^2(\Sigma_{\tau'})}\|\partial^{I_2}L^{J_2}\psi\|_{L^\infty(\Sigma_{\tau'})}\|\partial^{I_3}L^{J_3}\psi\|_{L^\infty(\Sigma_{\tau'})}\,d\tau' \\
    & \le 3C'  \int_{\tau_0}^\tau \|(\partial_j+\frac{x^j}{t}\partial_t)\partial^{I_1}L^{J_1}\psi\|_{L^2(\Sigma_{\tau'})}\|\partial^{I_2}L^{J_2}\psi\|_{L^\infty(\Sigma_{\tau'})}\|\partial^{I_3}L^{J_3}\psi\|_{L^\infty(\Sigma_{\tau'})}\,d\tau' \\
    & \qquad + 3C'  \int_{\tau_0}^\tau \|\partial_t\partial^{I_1}L^{J_1}\psi\|_{L^2(\Sigma_{\tau'})}\|\partial^{I_2}L^{J_2}\psi\|_{L^\infty(\Sigma_{\tau'})}\|\partial^{I_3}L^{J_3}\psi\|_{L^\infty(\Sigma_{\tau'})}\,d\tau' \\
    & \le 3C'C^3\varepsilon^3M^{-2},
\end{align*}
where we used the fact $\|(\partial_j+\frac{x^j}{t}\partial_t)\psi\|_{L^2(\Sigma_\tau)}\le E[\psi](\tau)^\frac12$.
We choose $\varepsilon=\frac{M}{3C'C}$ and completes the proof of Proposition \ref{prop-cubic-est}.
\subsection{Proof of Proposition \ref{prop-dkg-est}}
As the previous proof, it is enough to consider the case $|I_1|=|I|$ in the summation. Then
we have
\begin{align*}
    \int_{\tau_0}^\tau \|\partial_\mu \partial^IL^J(\phi\psi)\|_{L^2(\Sigma_{\tau'})}\,d\tau' & \le \sum_{\substack{I_1,J_1 \\ I_2, J_2}}\int_{\tau_0}^\tau \|(\partial_\mu\partial^{I_1}L^{J_2}\phi )(\partial^{I_2}L^{J_2}\psi)\|_{L^2(\Sigma_{\tau'})}\,d\tau',
\end{align*}
where we may assume that $|I_1|+|J_1| \ge |I_2|+|J_2|$ and $\mu=0,1,2,3$. Then
\begin{align*}
  &  \sum_{\substack{I_1,J_1 \\ I_2, J_2}}\int_{\tau_0}^\tau \|(\partial_\mu\partial^{I_1}L^{J_2}\phi )(\partial^{I_2}L^{J_2}\psi)\|_{L^2(\Sigma_{\tau'})}\,d\tau' \\
  & \le \sum_{\substack{I_1,J_1 \\ I_2, J_2}}C^{I_1,J_1}_{I_2,J_2}\int_{\tau_0}^\tau \|(\partial_t\partial^{I_1}L^{J_2}\phi )(\partial^{I_2}L^{J_2}\psi)\|_{L^2(\Sigma_{\tau'})}\,d\tau' \\
  & \qquad + \sum_{\substack{I_1,J_1 \\ I_2, J_2}}C^{I_1,J_1}_{I_2,J_2}\sum_{j=1}^3\int_{\tau_0}^\tau \|(\partial_j\partial^{I_1}L^{J_2}\phi )(\partial^{I_2}L^{J_2}\psi)\|_{L^2(\Sigma_{\tau'})}\,d\tau'.
\end{align*}
Applying the H\"older inequality, the Klainerman-Sobolev inequality, and then the bootstrap assumptions \eqref{bootstrap-spinor} and \eqref{bootstrap-scalar}, we see that
\begin{align*}
    & \sum_{\substack{I_1,J_1 \\ I_2, J_2}}C^{I_1,J_1}_{I_2,J_2}\int_{\tau_0}^\tau \|(\partial_t\partial^{I_1}L^{J_2}\phi )(\partial^{I_2}L^{J_2}\psi)\|_{L^2(\Sigma_{\tau'})}\,d\tau' \\
    & \le C' \int_{\tau_0}^\tau \|\partial_t\partial^{I_1}L^{J_1}\phi\|_{L^2(\Sigma_{\tau'})}\|\partial^{I_2}L^{J_2}\psi\|_{L^\infty(\Sigma_{\tau'})}\,d\tau' \\
    & \le C' M^{-1} \int_{\tau_0}^\tau \frac t\tau C\varepsilon \tau^{\frac12+(|I_1'|+|J_1|)\delta}t^{-\frac32}C\varepsilon \tau^{(|I_2'|+|J_2|+2)\delta}\,d\tau' \\
    & \le C'C^2\varepsilon^2 M^{-1}\int_{\tau_0}^\tau t^{-\frac12}\tau^{-\frac12+(N+2)\delta}\,d\tau' \\
    & \le C'C^2\varepsilon^2 M^{-1}\tau^{(N+2)\delta},
\end{align*}
and
\begin{align*}
   & \sum_{\substack{I_1,J_1 \\ I_2, J_2}}C^{I_1,J_1}_{I_2,J_2}\sum_{j=1}^3\int_{\tau_0}^\tau \|(\partial_j\partial^{I_1}L^{J_2}\phi )(\partial^{I_2}L^{J_2}\psi)\|_{L^2(\Sigma_{\tau'})}\,d\tau' \\
   & \le 3C' \int_{\tau_0}^\tau \|(\partial_j+\frac{x^j}t\partial_t)\partial^{I_1}L^{J_1}\phi\|_{L^2(\Sigma_{\tau'})}\|\partial^{I_2}L^{J_2}\psi\|_{L^\infty(\Sigma_{\tau'})}\,d\tau' \\
   & \qquad +3C'  \int_{\tau_0}^\tau \|\partial_t\partial^{I_1}L^{J_1}\phi\|_{L^2(\Sigma_{\tau'})}\|\partial^{I_2}L^{J_2}\psi\|_{L^\infty(\Sigma_{\tau'})}\,d\tau' \\
   & \le 6C' C^2\varepsilon^2M^{-1}\tau^{(N+2)\delta}.
\end{align*}
Thus we choose $\varepsilon=\dfrac{\min(m,M)}{12C'C}$. Note that we have not used any specific structure of the quadratic term $\phi\psi$, and hence our argument can be readily applied to the quadratic term $\psi^\dagger\gamma^0\psi$. We omit the details, and this completes the proof. 
\subsection{Proof of Proposition \ref{prop-est-linear}} 
We give the proof of the estimates for the linear terms which appear in the right-handside of the nonlinear wave-type equation derived from Proposition \ref{prop-squaring-dirac}. 
This can be achieved as follows: we first write $\Gamma^\mu\partial_\mu\psi = \Gamma^0\partial_t\psi+\Gamma^j\partial_j\psi$. Then
\begin{align*}
    \|\partial^IL^J\Gamma^j\partial_j\psi\|_{L^2(\Sigma_\tau)} & \le c\|\langle r\rangle^{-2}\partial_j\partial^IL^J\psi\|_{L^2(\Sigma_\tau)}+\sum_{|I_2|+|J_2|\le N-1}c_{I_1,J_1,I_2,J_2}\|\partial^{I_1}L^{J_2}\Gamma^j \partial_j \partial^{I_2}L^{J_2}\psi\|_{L^2(\Sigma_\tau)}.
\end{align*}
The second term turns out to be an acceptable error via the previous argument in Section 4. To control the first term, we write
\begin{align*}
    \|\langle r\rangle^{-2}\partial_j\partial^IL^J\psi\|_{L^2(\Sigma_\tau)} & \le \|(\partial_j+\frac{x^j}t\partial_t)\partial^IL^J\psi\|_{L^2(\Sigma_\tau)}+ \|\langle r\rangle^{-2}\frac{x^j}t\partial_t \partial^IL^J\psi\|_{L^2(\Sigma_\tau)}.
\end{align*}
Now we see that
\begin{align*}
     \|\langle r\rangle^{-2}\frac{x^j}t\partial_t \partial^IL^J\psi\|_{L^2(\Sigma_\tau)} & \le \tau^{-1}\|\langle r\rangle^{-1}\frac\tau t \partial_t\partial^IL^J\psi\|_{L^2(\Sigma_\tau)} \\
     & \le \tau^{-1}E[\partial^IL^J\psi](\tau)^\frac12
\end{align*}
and
\begin{align*}
    \|(\partial_j+\frac{x^j}t\partial_t)\partial^IL^J\psi\|_{L^2(\Sigma_\tau)} & \le t^{-1}\|\partial^IL^{J+1}\psi\|_{L^2(\Sigma_\tau)} \\
    & = t^{-1}\|\partial_t\partial^{I-1}L^{J+1}\psi\|_{L^2(\Sigma_\tau)} \\
    & \le \tau^{-1}E[\partial^{I-1}L^{J+1}\psi](\tau)^\frac12.
\end{align*}
If $\partial^I=\partial_x^I$, then we get a better bound. Indeed, we see that
\begin{align*}
    \|(\partial_j+\frac{x^j}t\partial_t)\partial^IL^J\psi\|_{L^2(\Sigma_\tau)} & \le\|t^{-1}\partial^IL^{J+1}\psi\|_{L^2(\Sigma_\tau)} \\
    & \le \|t^{-1}(\partial_j+\frac{x^j}{t}\partial_t)\partial_x^{I-1}L^J\psi\|_{L^2(\Sigma_\tau)}+ \|t^{-1}\frac{x^j}{t}\partial_t\partial_x^{I-1}L^J\psi\|_{L^2(\Sigma_\tau)} \\
    & \le \|t^{-2}\partial_x^{I-1}L^{J+1}\psi\|_{L^2(\Sigma_\tau)}+\|t^{-2}|x| \partial_x^{I-1}L^{J}\psi\|_{L^2(\Sigma_\tau)}.
\end{align*}
For the $\Gamma^0\partial_t\psi$, we follow the previous argument in Section \ref{sec:commutators}. Indeed, we see that
\begin{align*}
	\|\Gamma^0\partial_t\partial^IL^J\psi\|_{L^2(\Sigma_\tau)} & \lesssim \|\langle r\rangle^{-3}\partial_t\partial^IL^J\psi\|_{L^2(\Sigma_\tau)} \\
	& = \|\langle r\rangle^{-3}\partial_t\partial_i\partial_j\partial^{I-2}L^J\psi\|_{L^2(\Sigma_\tau)} \\
	& \lesssim \|t^{-2}\langle r\rangle^{-3}\partial_t \partial^{I-2}L^{J+2}\psi\|_{L^2(\Sigma_\tau)}+\|t^{-2}\langle r\rangle^{-2}\partial_t^2\partial^{I-2}L^{J+1}\psi\|_{L^2(\Sigma_\tau)} \\
	&\qquad +\|t^{-2}\langle r\rangle^{-1}\partial_t^3\partial^{I-2}L^J\psi\|_{L^2(\Sigma_\tau)}.
\end{align*}
For the $V\psi$, we simply use the Hardy-type inequality Lemma \ref{hardy-ineq} on the hyperboloid to get
\begin{align*}
    \|V\partial^IL^J\psi\|_{L^2(\Sigma_\tau)} & \le c\|\langle r\rangle^{-3}\partial^IL^J\psi\|_{L^2(\Sigma_\tau)} \\
    & \le cC'\sum_{j=1}^3\|\langle r\rangle^{-2}(\partial_j+\frac{x^j}t\partial_t)\partial^IL^J\psi\|_{L^2(\Sigma_\tau)} \\
    & \le 3cC' t^{-1}\| \partial^I L^{J+1}\psi\|_{L^2(\Sigma_\tau)} \\
    & = 3cC' \tau^{-1}\|\frac\tau t \partial_t \partial^{I-1} L^{J+1}\psi\|_{L^2(\Sigma_\tau)} \\
    & \le 3cC' \tau^{-1}E[\partial^{I-1}L^{J+1}\psi](\tau)^\frac12,
\end{align*}
where the smallness of the constant $c>0$ ensures to close the bootstrap argument provided that the underlying curved spacetime is sufficently close to the Minkowski spacetime. If $\partial^I=\partial_x^I$, we obtain a better bound via the identical manner as in the control of $\Gamma^\mu\partial_\mu\psi$. We omit the details. Thus we conclude that
\begin{align*}
    \int_{\tau_0}^\tau \|\partial^IL^J\Gamma^\mu\partial_\mu\psi\|_{L^2(\Sigma_{\tau'})}+\|\partial^IL^JV\psi\|_{L^2(\Sigma_{\tau'})}\,d\tau' \le \frac14 C\varepsilon \tau^{\frac12+(|I'|+|J|)\delta}, \quad |I|+|J|\le N, 
\end{align*}
which completes the proof. Combining all of the results from Proposition \ref{prop-est-linear}, Proposition \ref{prop-cubic-est}, and Proposition \ref{prop-dkg-est}, the bootstrap argument can be closed and $\tau^*=+\infty$, and hence the Cauchy problems of the cubic Dirac \eqref{cubic-kg} and the Dirac-Klein-Gordon systems \eqref{DKG-kg} admit global solutions. Furthermore, repeating the bootstrap argument with modified assumptions, given the data on the initial hyperboloid $\Sigma_{\tau_0}$ satisfying the energy bound,
\begin{align*}
	E_M[\partial^IL^J\psi](\tau_0)^\frac12+E_m[\partial^IL^J\phi](\tau_0)^\frac12 \le C_0\varepsilon,
\end{align*}
we can establish the global solutions $\psi$ and $(\psi,\phi)$ to \eqref{cubic-kg} and \eqref{DKG-kg}, respectively, satisfying the energy bound
\begin{align*}
	\sup_{\tau_0\le\tau}E_M [ \partial^IL^J\psi ](\tau)^\frac12+E_m[\partial^IL^J\phi](\tau)^\frac12 \le 2C_0\varepsilon.
\end{align*}
 From the bootstrap argument with the Klainerman-Sobolev inequality, we deduce that the solutions $\psi$ to \eqref{cubic-kg} and $(\psi,\phi)$ to \eqref{DKG-kg} satisfy the pointwise decay: for all $\tau\ge\tau_0$
\begin{align*}
	\sup_{(t,x)\in\Sigma_\tau}t^\frac32 |\psi(t,x)| \le C_0\varepsilon,
\end{align*}
and
\begin{align*}
	\sup_{(t,x)\in\Sigma_\tau}t^\frac32 |\psi(t,x)|+ \sup_{(t,x)\in\Sigma_\tau}t^\frac32 |\phi(t,x)| \le C_0\varepsilon,
\end{align*}
which completes the proof of Theorem \ref{thm-global-cubic} and Theorem \ref{thm-global-dkg}.
\section*{Appendix}
The purpose of this section is to present backgrounds of the hyperboloidal foliation method and establish global existence of cubic and quadratic nonlinear Klein-Gordon equations as toy models.
\subsection{Energy estimates on the hyperboloid}
We introduce the foliation $\Sigma_\tau=\{(t,x): \tau = \sqrt{t^2-|x|^2}\}$. 
This is the level set of the function $f(t,x)=\sqrt{t^2-|x|^2}$ and hence the normal vector $N_\mu$ is given by $N_\mu=\partial_\mu f=\frac1{\sqrt{t^2-|x|^2}}(t,-x^j)$ and from $N^\mu= g^{\mu\nu}N_\nu$ we have
\begin{align*}
	N^0 = g^{0\nu} \partial_{\nu}f = \frac1{\sqrt{t^2-|x|^2}}(g^{00}t-g^{0j}x^j), \quad N^j = g^{j\nu}\partial_\nu f = \frac1{\sqrt{t^2-|x|^2}}(g^{0j}t-g^{ij}x^i),
\end{align*}
or equivalently, we have 
\begin{align*}
N^\mu = \frac1{\sqrt{t^2-|x|^2}} 	(g^{\mu0}t-g^{\mu i}x^i).
 \end{align*}
Then the quantity $g^{\mu\nu}N_\mu N_\nu$ is given by
\begin{align*}
	g^{\mu\nu}N_\mu N_\nu = \frac1{t^2-|x|^2}(g^{00}t^2+g^{ij}x^ix^j-2g^{0j}tx^j) := \sigma.
\end{align*}
We also observe that
\begin{align*}
	|\sigma| = 1+\frac1{t^2-|x|^2}( 2g^{0j}tx^j-(g^{ij}-\delta^{ij})x^ix^j ).
\end{align*}
Then the unit normal vector $n^\mu$ is given by
\begin{align*}
	n^\mu = \frac1{\sqrt{|\sigma|}}\frac1{\sqrt{t^2-|x|^2}}(g^{\mu0}t-g^{\mu i}x^i).
\end{align*}
Since $g^{00}=-1$, we choose $n^\mu\to -n^\mu$ so that $n^0$ is positive and $n^\mu$ is future-directied unit time-like vector. From now on, we consider the unit normal vector
\begin{align*}
	n^\mu = \frac1{\sqrt{|\sigma|}}\frac1{\sqrt{t^2-|x|^2}}(-g^{\mu0}t+g^{\mu i}x^i).
\end{align*}
From this we define the vector field $N\phi = n^\mu\partial_\mu\phi$. 

For $\tau_0\le \tau_1$, we consider the subdomain bounded by the hyperboloids $\Sigma_{\tau_0}$ and $\Sigma_{\tau_1}$, inside the forward light cone, which is denoted by
\begin{align*}
    D_{[\tau_0,\tau_1]} = \bigcup_{\tau_0\le\tau\le\tau_1}\Sigma_{\tau} \,\cap \{(t,x): |x|<t-1\}. 
\end{align*}

We shall establish the energy estimates inside the forward light cone $\{(t,x):|x|<t-1\}$, using the hyperboloid foliation $\Sigma_\tau$. To do this, we use the contraction with the energy momentum tensor $T_{\mu\nu}$ given by
\begin{align*}
T_{\mu\nu} & = \Re(\partial_\mu\phi\overline{\partial_\nu\phi})-\frac12 g_{\mu\nu}(g^{\alpha\beta}\partial_\alpha\phi\overline{\partial_\beta\phi}+|\phi|^2),    
\end{align*}
with the vector field $X=\partial_t$. Then we use the divergence theorem for $T_{\mu\nu}X^\nu$ on the domain $\bigcup_{\tau_0\le \tau'\le \tau}\Sigma_{\tau'}$, which gives
\begin{align*}
    \int_{\Sigma_{\tau}}T_{\mu\nu}X^\nu n^\mu \,d\sigma_{\Sigma_{\tau}}-\int_{\Sigma_{\tau_0}}T_{\mu\nu}X^\nu n^\mu\,d\sigma_{\Sigma_{\tau_0}} = \int_{\tau_0}^{\tau}T^{\mu\nu}\pi_{\mu\nu}^{(X)}d\sigma_{\Sigma_{\tau'}}d\tau'+\int_{\tau_0}^{\tau}\Re(F\cdot \overline{\partial_t\phi})\,d\sigma_{\Sigma_{\tau'}}d\tau',
\end{align*}
where $\pi^{(X)}$ is the deformation tensor given by
\begin{align*}
    \pi^{(X)}_{\mu\nu} & = \nabla_\mu X_\nu +\nabla_\nu X_\mu.
\end{align*}
We note that the deformation tensor $\pi^{(X)}$ vanishes provided that $X$ is a Killing field. Here we are concerned with a non-stationary case and assume \eqref{nonstationary} and adapt the normalised coordinates.
We compute the energy flux $T_{\mu\nu}X^\nu n^\mu$:
\begin{align*}
    T_{\mu\nu}X^\nu n^\mu & = \Re(n^\mu\partial_\mu\phi \overline{X^\nu\partial_\nu\phi})-\frac12g_{\mu\nu}X^\nu n^\mu(g^{\alpha\beta}\partial_\alpha\phi\overline{\partial_\beta\phi}+|\phi|^2).
\end{align*}
Here we compute
\begin{align*}
    g_{\mu\nu}X^\nu n^\mu &= g_{00}X^0 n^0+g_{j0}X^0 n^j \\
    & = -n^0+g_{j0}n^j \\
    & = \frac1{\sqrt{|\sigma|}\sqrt{t^2-|x|^2}}(-t-g^{0i}x^i)+ \frac1{\sqrt{|\sigma|}\sqrt{t^2-|x|^2}}g_{j0}(-g^{j0}t+g^{ji}x^i) \\
    & = \frac1{\sqrt{|\sigma|}\sqrt{t^2-|x|^2}}(-t)+(Error),
\end{align*}
and
\begin{align*}
    N\phi &=  \frac1{\sqrt{|\sigma|}\sqrt{t^2-|x|^2}}\left( (t+g^{0i}x^i )\partial_t\phi+(-g^{j0}t+g^{ji}x^i )\partial_j\phi\right) \\
    & = \frac1{\sqrt{|\sigma|}\sqrt{t^2-|x|^2}}(t\partial_t\phi+x^j\partial_j\phi) \\
    & \qquad+\frac1{\sqrt{|\sigma|}\sqrt{t^2-|x|^2}}\left( (g^{0i}x^i\partial_t\phi+(-g^{j0}t+(g^{ij}-\delta^{ij}x^i)\partial_j\phi) \right) \\
    & = \frac1{\sqrt{|\sigma|}\sqrt{t^2-|x|^2}}(t\partial_t\phi+x^j\partial_j\phi)+(Error).
\end{align*}
In consequence, we have
\begin{align*}
    T_{\mu\nu}X^\nu n^\mu & = \frac{t}{\sqrt{|\sigma|}\sqrt{t^2-|x|^2}}\left( |\partial_t\phi|^2+|\partial_x\phi|^2+|\phi|^2+2\frac{x^j}{t}\Re(\partial_t\phi\overline{\partial_j\phi}) \right)+(Error),
\end{align*}
where
\begin{align*}
    (Error) &= \frac1{\sqrt{|\sigma|}\sqrt{t^2-|x|^2}}\left( (g^{ij}-\delta^{ij})(\partial_i\phi\overline{\partial_j\phi}+\partial_j\phi\overline{\partial_i\phi})+g^{j0}(\partial_t\phi\overline{\partial_j\phi}+\partial_j\phi\overline{\partial_t\phi}) \right) \\
    & \qquad +\frac{g^{0i}x^i+g_{j0}(g^{j0}t-g^{ji}x^i)}{2\sqrt{|\sigma|}\sqrt{t^2-|x|^2}}(g^{\alpha\beta}\partial_\alpha\phi\overline{\partial_\beta\phi}+|\phi|^2) \\
    &\qquad +\frac1{\sqrt{|\sigma|}\sqrt{t^2-|x|^2}}\left( (g^{0i}x^i)|\partial_t\phi|^2+(-g^{j0}t+(g^{ij}-\delta^{ij})x^i)\Re(\partial_j\phi\overline{\partial_t\phi}) \right).
\end{align*}
The factor $t$, which appears in the error terms, should not be harmful. Indeed, we observe that
\begin{align*}
	t &= t-\sqrt{t^2-|x|^2}+\sqrt{t^2-|x|^2} \\
	& = \frac{t^2-(t^2-|x|^2)}{t+\sqrt{t^2-|x|^2}}+\sqrt{t^2-|x|^2} \\
	& = \frac{|x|^2}{t+\sqrt{t^2-|x|^2}}+\sqrt{t^2-|x|^2} \le \frac{|x|^2}{t}+\sqrt{t^2-|x|^2}.
\end{align*}
Then we see that
\begin{align*}
	\frac{|g^{j0}|t}{\sqrt{t^2-|x|^2}} \le c\left( \frac1{t\sqrt{t^2-|x|^2}}+\langle x\rangle^{-2} \right),
\end{align*}
where $c$ is a sufficiently small constant, due to the non-trapping condition. Thus we conclude that
\begin{align*}
	|(Error)| \le c|\partial\phi|^2,
\end{align*}
and this can be absorbed into the leading terms of the energy flux. Indeed, we define the energy
\begin{align}
    E^{}[\phi](\tau) = \int_{\Sigma_\tau}\left( |\partial_t\phi|^2+|\partial_x\phi|^2+|\phi|^2+2\frac{x^j}{t}\Re(\partial_t\phi\overline{\partial_j\phi}) \right)\,dx
\end{align}
or equivalently
\begin{align}
E[\phi](\tau) = 	\int_{\Sigma_\tau} \left( \sum_{j=1}\left| \partial_j\phi+\frac{x^j}{t}\partial_t\phi \right|^2+\left|\frac{\tau}{t}\partial_t\phi\right|^2 +|\phi|^2\right)\,dx,
\end{align}
where we used the fact that the surface measure satisfies the identity $d\sigma_{\Sigma_\tau}=\frac{\tau}{t}dx$. In order to establish the energy inequality, we need to control the bulk term $\int T\pi$.
For $X=\partial_t$, we have $X^0=1$ and $X^j=0$, $j=1,2,3$. Then $X_\nu=g_{\nu0}X^0=g_{\nu0}$ and hence
\begin{align*}
	\pi_{00} & = 2(\partial_tg_{00}-\Gamma^\lambda_{00}g_{\lambda0}) = -2\Gamma^\lambda_{00}g_{\lambda0}, \\
	\pi_{0i} & = \partial_tg_{i0}-2\Gamma^\lambda_{i0}g_{\lambda0} ,\\
	\pi_{ij} & = \partial_ig_{j0}+\partial_jg_{i0}-2\Gamma^\lambda_{ij}g_{\lambda0}.
\end{align*}
We recall that the Christoffel symbols are given by
\begin{align*}
	\Gamma^\lambda_{\mu\nu} &= \frac12 g^{\lambda\rho}(\partial_\mu g_{\nu\rho}+\partial_\nu g_{\mu\rho}-\partial_\rho g_{\mu\nu}),
\end{align*}
and we compute
	\begin{align*}
		|\Gamma^0_{00} |& = \frac12 |g^{0\rho}(\partial_tg_{0\rho}+\partial_tg_{0\rho}-\partial_\rho g_{00}) |\le c^2 \langle r+t\rangle^{-2-2\eta}\langle r\rangle^{-3+2\eta}, \\
		|\Gamma^i_{00}| & = \frac12 |g^{i\rho}(\partial_tg_{0\rho}+\partial_tg_{0\rho}-\partial_\rho g_{00}) |\le c \langle r+t\rangle^{-1-\eta}\langle r\rangle^{-2+\eta}, \\
		|\Gamma^0_{i0}| &= \frac12 |g^{0\rho}(\partial_ig_{0\rho}+\partial_tg_{i\rho}-\partial_\rho g_{i0})| \le c \langle r+t\rangle^{-1-\eta}\langle r\rangle^{-2+\eta} ,\\
		|\Gamma^j_{i0}| & = \frac12| g^{j\rho}(\partial_ig_{0\rho}+\partial_tg_{i\rho}-\partial_\rho g_{i0})| \le c \langle r+t\rangle^{-1-\eta}\langle r\rangle^{-2+\eta} , \\
		|\Gamma^0_{ij}| &= \frac12 |g^{0\rho}(\partial_ig_{j\rho}+\partial_jg_{i\rho}-\partial_\rho g_{ij})| \le c \langle r+t\rangle^{-1-\eta}\langle r\rangle^{-2+\eta}, \\
		|\Gamma^k_{ij}| & = \frac12 |g^{k\rho}(\partial_ig_{j\rho}+\partial_jg_{i\rho}-\partial_\rho g_{ij})| \le c\langle r\rangle^{-3}+c^2 \langle r+t\rangle^{-1-\eta}\langle r\rangle^{-2+\eta}  ,
		\end{align*}
with a sufficiently small constant $c>0$. The $\Gamma^k_{ij}$ do not reveal a decay in $|r+t|$, which seems problematic. However, this term only appears in $\pi_{ij}$ and $\Gamma^\lambda_{ij}g_{\lambda0}=\Gamma^0_{ij}g_{00}+\Gamma^k_{ij}g_{k0}$. Thus $g_{k0}$ yields the decay in $|r+t|$. Therefore, we deduce that
\begin{align*}
    |\pi_{\mu\nu}^{(X)}| \le c\langle r+t\rangle^{-1-\eta}\langle r\rangle^{-2+\eta}.
\end{align*}
Now we compute the components of the energy-momentum tensor:
	  \begin{align*}
	  	T^{00} & = \partial^0\phi\partial^0\phi-\frac12g^{00}(-|\partial_t\phi|^2+|\partial_x\phi|^2+|\phi|^2+(g^{ij}-\delta^{ij})\partial_i\phi\partial_j\phi+2g^{0j}\partial_t\phi\partial_j\phi) \\
	  	& = g^{\mu0}g^{\nu0}\partial_\mu\phi\partial_\nu\phi+\frac12 (-|\partial_t\phi|^2+|\partial_x\phi|^2+|\phi|^2+(g^{ij}-\delta^{ij})\partial_i\phi\partial_j\phi+2g^{0j}\partial_t\phi\partial_j\phi)\\
	  	& = |\partial_t\phi|^2+g^{i0}g^{j0}\partial_i\phi\partial_j\phi +\frac12 (-|\partial_t\phi|^2+|\partial_x\phi|^2+|\phi|^2+(g^{ij}-\delta^{ij})\partial_i\phi\partial_j\phi+2g^{0j}\partial_t\phi\partial_j\phi) \\
	  	& = \frac12(|\partial_t\phi|^2+|\partial_x\phi|^2+|\phi|^2) + (Error),
	  \end{align*}
	  and
	  \begin{align*}
	  	T^{0j} & = \partial^0\phi\partial^j\phi-\frac12 g^{0j}(-|\partial_t\phi|^2+|\partial_x\phi|^2+|\phi|^2+(g^{ij}-\delta^{ij})\partial_i\phi\partial_j\phi+2g^{0j}\partial_t\phi\partial_j\phi) \\
	  	& =g^{\mu0}g^{\nu j}\partial_\mu\phi \partial_\nu\phi -\frac12 g^{0j}(-|\partial_t\phi|^2+|\partial_x\phi|^2+|\phi|^2+(g^{ij}-\delta^{ij})\partial_i\phi\partial_j\phi+2g^{0j}\partial_t\phi\partial_j\phi) \\
	  	& = -\partial_t\phi\partial_j\phi + (Error),
	  \end{align*}
	  and
	  \begin{align*}
	  	T^{ij} & = \partial_i\phi\partial_j\phi-\frac12\delta^{ij}(-|\partial_t\phi|^2+|\partial_x\phi|^2+|\phi|^2)+(Error).
	  \end{align*}
In consequence, we see that
\begin{align*}
    \left| \int_{D_{[\tau_0,\tau]}}T^{\mu\nu}\pi_{\mu\nu}^{(X)} \right| & \le c\int_{\tau_0}^{\tau} \frac1{\tau^{1+\eta}}E[\phi](\tau')\,d\tau' \le c\sup_{\tau_0\le\tau'\le\tau}E[\phi](\tau'),
\end{align*}
for some sufficiently small $c>0$.
Then
we may choose constants $c_1<c_2$ so that
\begin{align*}
	c_1E[\phi](\tau) & \le c_2E[\phi](\tau_0) +c\sup_{\tau_0\le\tau'\le\tau}E[\phi](\tau')+\int_{\tau_0}^\tau \int_{\Sigma_\tau}|F\cdot\partial_t\phi|\,d\sigma_{\Sigma_\tau}\,d\tau' \\
    & = c_2E^{}[\phi](\tau_0)+c\sup_{\tau_0\le\tau'\le\tau}E[\phi](\tau')+\int_{\tau_0}^\tau \int_{\Sigma_\tau}|F||\partial_t\phi|\frac{\tau}{t}\, dx d\tau' \\
    & \le  c_2E^{}[\phi](\tau_0)+c\sup_{\tau_0\le\tau'\le\tau}E[\phi](\tau')+\int_{\tau_0}^\tau \|F\|_{L^2(\Sigma_{\tau'})}\,d\tau' \sup_{\tau_0\le \tau'\le \tau}E^{}[\phi](\tau')^\frac12.
\end{align*}
Then the standard energy argument implies
\begin{align*}
    \sup_{\tau_0\le \tau'\le \tau}E^{}[\phi](\tau')^\frac12 \lesssim E^{}[\phi](\tau_0)^\frac12+\int_{\tau_0}^\tau \|F\|_{L^2(\Sigma_{\tau'})}\,d\tau',
\end{align*}
where the bulk term is absorbed into the left-hand side due to the smallness of $c>0$.
We summarise it as follows:
\begin{prop}
For the solution $\phi$ to the inhomogeneous Klein-Gordon equation $(\Box_g-1)\phi =F$, which are spatially supported inside the forward light cone $\{(t,x): |x|<t-1\}$, we have the energy estimates:
    \begin{align}
         \sup_{\tau_0\le \tau'\le \tau}E^{}[\phi](\tau')^\frac12 \lesssim E^{}[\phi](\tau_0)^\frac12+\int_{\tau_0}^\tau \|F\|_{L^2(\Sigma_{\tau'})}\,d\tau'.
    \end{align}
\end{prop}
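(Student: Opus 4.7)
The plan is to run the classical multiplier argument with the future-directed timelike vector field $X = \partial_t$ and the energy-momentum tensor
\[
T_{\mu\nu} = \Re(\partial_\mu\phi\,\overline{\partial_\nu\phi}) - \tfrac{1}{2}g_{\mu\nu}\bigl(g^{\alpha\beta}\partial_\alpha\phi\,\overline{\partial_\beta\phi} + |\phi|^2\bigr).
\]
First I would apply the divergence theorem to the current $T_{\mu\nu}X^\nu$ on the truncated slab $D_{[\tau_0,\tau]}$. Because $\phi$ is spatially supported in $\{|x| < t-1\}$, the lateral boundary contribution vanishes identically, leaving only the hyperboloidal faces $\Sigma_{\tau_0}$ and $\Sigma_\tau$. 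Expanding $\nabla^\mu(T_{\mu\nu}X^\nu)$ produces the deformation-tensor bulk term together with the source contribution, giving the fundamental identity
\[
\int_{\Sigma_\tau}\!T_{\mu\nu}X^\nu n^\mu\,d\sigma - \int_{\Sigma_{\tau_0}}\!T_{\mu\nu}X^\nu n^\mu\,d\sigma = \int_{D_{[\tau_0,\tau]}}\!T^{\mu\nu}\pi^{(X)}_{\mu\nu}\,dV + \int_{D_{[\tau_0,\tau]}}\!\Re(F\,\overline{\partial_t\phi})\,dV,
\]
where $\pi^{(X)}_{\mu\nu} = \nabla_\mu X_\nu + \nabla_\nu X_\mu$.

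Next I would establish coercivity of the boundary flux. The explicit computation of $T_{\mu\nu}X^\nu n^\mu$ in the normalised coordinates extracts a leading quadratic form proportional to $|\partial_t\phi|^2 + |\partial_x\phi|^2 + |\phi|^2 + 2(x^j/t)\Re(\partial_t\phi\,\overline{\partial_j\phi})$ plus errors proportional to $g-m$. Using the algebraic decomposition $t \le |x|^2/t + \sqrt{t^2 - |x|^2}$ together with the smallness of $c^{lr}_\alpha$ and $c^{sr}_{\alpha,\beta}$ from \eqref{nonstationary}, these errors are bounded pointwise by $c\,|\partial\phi|^2$ for arbitrarily small $c$. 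Combined with the surface-measure identity $d\sigma_{\Sigma_\tau} = (\tau/t)\,dx$, this yields two-sided bounds $c_1 E[\phi](\tau) \le \int_{\Sigma_\tau}T_{\mu\nu}X^\nu n^\mu\,d\sigma \le c_2 E[\phi](\tau)$.

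For the bulk term I would use the Christoffel bounds already computed, which give $|\pi^{(X)}_{\mu\nu}| \lesssim c\,\langle r+t\rangle^{-1-\eta}\langle r\rangle^{-2+\eta}$. Pairing this with the pointwise expansion of $T^{\mu\nu}$ as a leading quadratic in $\partial\phi,\phi$ up to $(g-m)$-errors, integration over the hyperboloidal slices gives
\[
\Bigl|\int_{D_{[\tau_0,\tau]}} T^{\mu\nu}\pi^{(X)}_{\mu\nu}\,dV\Bigr| \le c\int_{\tau_0}^{\tau}(\tau')^{-1-\eta}E[\phi](\tau')\,d\tau' \le c\sup_{\tau_0 \le \tau' \le \tau}E[\phi](\tau'),
\]
while the source term is handled by Cauchy--Schwarz on each hyperboloid, producing a bound by $\int_{\tau_0}^\tau \|F\|_{L^2(\Sigma_{\tau'})}\,d\tau' \cdot \sup_{\tau'} E[\phi](\tau')^{1/2}$.

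Assembling the three estimates yields an inequality of the schematic form $c_1 \sup_{\tau'} E[\phi](\tau') \le c_2 E[\phi](\tau_0) + c\sup_{\tau'} E[\phi](\tau') + \bigl(\int \|F\|\bigr)\cdot \sup_{\tau'} E[\phi]^{1/2}$, and I would conclude by absorbing the middle term into the left (using $c < c_1$, i.e., closeness to Minkowski) and solving the resulting quadratic inequality in $\sup E[\phi]^{1/2}$. The main obstacle is the coercivity/absorption step: one must verify simultaneously that the $(g-m)$-errors appearing in both the boundary flux and the bulk integral are uniformly dominated by a constant strictly smaller than $c_1$, which is precisely where the non-trapping smallness on $c^{lr}_\alpha$ and $c^{sr}_{\alpha,\beta}$ is indispensable.
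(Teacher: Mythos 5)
Your proposal reproduces the paper's argument essentially verbatim: the same multiplier $X=\partial_t$ paired with the same energy-momentum tensor, the divergence theorem on $D_{[\tau_0,\tau]}$, coercivity of the boundary flux via the decomposition $t \le |x|^2/t + \sqrt{t^2-|x|^2}$ together with smallness of the metric perturbation, the Christoffel/deformation-tensor bounds for the bulk integral, Cauchy--Schwarz for the source, and the final absorption and quadratic-inequality step. This is the proof given in the Appendix, with no substantive deviation.
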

\subsection{Toy model: Cubic Klein-Gordon equations}
We consider the cubic Klein-Gordon equation 
\begin{align*}
	(\Box_g-1)\phi = |\phi|^2\phi, \quad (t,x)\in \{ (t,x): t\ge2, |x|\le t-1 \}.
\end{align*}
with initial data $\phi|_{t=2}=\phi_0,\ \partial_t\phi|_{t=2}=\phi_1$. 
The goal is to establish global solution of the cubic problem, given any spatially compactly supported, smooth initial data with $\|\phi_0\|_{H^N}+||\phi_1\|_{H^{N-1}}\le\varepsilon$. 

Let $\phi$ be the local-in-time solution to the Cauchy problem associated to the cubic problem. A standard local analysis ensures to construct a local-in-time solution from the data given on the initial hyperboloid $\Sigma_{\tau_0}$ with $\tau_0=2$, and for all $|I|+|J|\le N$,
\begin{align}
    E[\partial^IL^J\phi](\tau_0)^\frac12 \le C_0\varepsilon,
\end{align}
for some absolute constant $C_0>0$.
Now we make the bootstrap assumptions: for some hyperbolic time interval $[\tau_0,\tau_1]$, the following inequalities hold:
\begin{align*}
	E[\partial^IL^J\phi](\tau)^\frac12 \le C\varepsilon \tau^{(|I'|+|J|)\delta}, \quad |I|+|J|\le N-4, \\
	E[\partial^IL^J\phi](\tau)^\frac12 \le C\varepsilon \tau^{\frac12+(|I'|+|J|)\delta}, \quad N-3\le |I|+|J|\le N,
\end{align*}
where we put $\partial^I = \partial_t^{I'}\partial_x^{I''}$ and we fix $\frac{1}{10N}\le\delta\le \frac{1}{5N}$.
We let $\tau_*=\sup\{ \tau_1 : \textrm{ the bootstrap assumptions hold on }[\tau_0,\tau_1] \}$. By choosing $C>C_0$, we have $\tau^*\ge2$.
Applying the energy estimate Proposition \ref{energy-ineq}, we have
\begin{align*}
	\sup_{\tau}E[\partial^IL^J\phi][\tau]^\frac12 \le C'E[\partial^IL^J\phi](\tau_0)^\frac12+\int_{\tau_0}^{\tau} \|[\partial^IL^J,\Box_g]\phi\|_{L^2(\Sigma_{\tau'})}\,d\tau' +\int_{\tau_0}^\tau \|\partial^IL^J(|\phi|^2\phi)\|_{L^2(\Sigma_{\tau'})}\,d\tau'.
\end{align*}
Then Proposition \ref{prop-error-box} gives
\begin{align*}
	\int_{\tau_0}^{\tau} \|[\partial^IL^J,\Box_g]\phi\|_{L^2(\Sigma_{\tau'})}\,d\tau'  & \le \sum_{|I'|+|J'|\le N}c_{I',J'}\int_{\tau_0}^{\tau} t^{-2}\|\partial^{I'}L^{J'}\phi\|_{L^2(\Sigma_{\tau'})}\,d\tau' \\
	& \le C\varepsilon\int_{\tau_0}^{\tau} t^{-2}(\tau')^{\frac12+(N+2)\delta}\,d\tau' \\
	& \le \frac12 C\varepsilon.
\end{align*}
For the cubic term, we see that
\begin{align*}
	\int_{\tau_0}^{\tau} \|\partial^IL^J(|\phi|^2\phi)\|_{L^2(\Sigma_{\tau'})}\,d\tau' & \le \sum_{\substack{I_1,I_2,I_3 \\ J_1,J_2, J_3}} c^{I_1,I_2,I_3}_{J_1,J_2,J_3}\int_{\tau_0}^{\tau}\|\partial^{I_1}L^{J_1}\phi \cdot \partial^{I_2}L^{J_2}\phi\cdot \partial^{I_3}L^{J_3}\phi\|_{L^2(\Sigma_{\tau'})}\,d\tau'.
\end{align*}
We may assume that $|I_1|+|J_1| \ge |I_2|+ |J_2| \ge |I_3|+ |J_3|$. Then using the Klainerman-Sobolev-type estimates Lemma \ref{KS-ineq} we have
\begin{align*}
&	\sum_{\substack{I_1,I_2,I_3 \\ J_1,J_2, J_3}}c^{I_1,I_2,I_3}_{J_1,J_2,J_3}\int_{\tau_0}^\infty \|\partial^{I_1}L^{J_1}\phi \cdot \partial^{I_2}L^{J_2}\phi\cdot \partial^{I_3}L^{J_3}\phi\|_{L^2(\Sigma_\tau)}\,d\tau \\
	& \le \sum_{\substack{I_1,I_2,I_3 \\ J_1,J_2, J_3}}c^{I_1,I_2,I_3}_{J_1,J_2,J_3} \int_{\tau_0}^{\tau}\|\partial^{I_1}L^{J_1}\phi \|_{L^2(\Sigma_{\tau'})} \| \partial^{I_2}L^{J_2}\phi\|_{L^\infty(\Sigma_{\tau'})} \|\partial^{I_3}L^{J_3}\phi\|_{L^\infty(\Sigma_{\tau'})}\,d\tau' \\
	& \le C_1 C^3\varepsilon^3 \int_{\tau_0}^{\tau} (\tau')^{\frac12+(|I_1'|+|J_1|)\delta}t^{-3}(\tau')^{(|I_2'|+|J_2|+|I_3'|+|J_3|+4)\delta}\,d\tau' \\
	&\le C_1C^3\varepsilon^3\int_{\tau_0}^{\tau} t^{-3} (\tau')^{\frac12+(N+4)\delta}\,d\tau' \\
    & \le \frac14 C_1C^3\varepsilon^3 ,
\end{align*}
where we used the fact that if $|I_1|+|J_1|\ge N-3$, then $|I_2|+|J_2|+ |I_3|+|J_3|\le 3$ provided that $N\ge14$. Now we choose $\varepsilon$ small enough so that $\varepsilon = \dfrac{1}{2CC_1}$. Therefore we obtain better estimates compared to the initial bootstrap assumptions, which shows that $\tau^*=+\infty$.
\subsection{Toy model: quadratic Klein-Gordon equations} We consider the quadratic Klein-Gordon equation
\begin{align}
    (\Box_g-1)\phi = \phi \nabla\phi, \quad (t,x)\in \{(t,x): t\ge2, |x|<t-1\},
\end{align}
with initial data $\phi|_{t=2}=\phi_0$ and $\partial_t\phi|_{t=2}=\phi_1$. Here $\nabla=\nabla_{t,x}$ is the space-time derivative. We make the bootstrap assumptions as in the previous problem. Then we have 
\begin{align*}
    \int_{\tau_0}^\tau \|\partial^I L^J \phi\partial\phi\|_{L^2(\Sigma_{\tau'})}\,d\tau' & \le \sum_{\substack{I_1,J_1 \\ I_2,J_2}}\sum_{j=1}^3\int_{\tau_0}^{\tau} \|\partial^{I_1}L^{J_1}\phi \cdot \partial^{I_2}L^{J_2}\partial_j\phi\|_{L^2(\Sigma_{\tau'})}\,d\tau' \\
    &\qquad+ \sum_{\substack{I_1,J_1 \\ I_2,J_2}}\int_{\tau_0}^{\tau} \|\partial^{I_1}L^{J_1}\phi \cdot \partial^{I_2}L^{J_2}\partial_t\phi\|_{L^2(\Sigma_{\tau'})}\,d\tau'.
\end{align*}
From now one we may assume that $|I_1|+|J_1|\le |I_2|+|J_2|$.
We first consider the second integral
\begin{align*}
     \sum_{\substack{I_1,J_1 \\ I_2,J_2}}\int_{\tau_0}^{\tau} \|\partial^{I_1}L^{J_1}\phi \cdot \partial^{I_2}L^{J_2}\partial_t\phi\|_{L^2(\Sigma_{\tau'})}\,d\tau' & \le C_1 \int_{\tau_0}^\tau \frac t\tau \|\partial^{I_1}L^{J_2}\phi\|_{L^\infty(\Sigma_{\tau'})}\frac\tau t \|\partial_t\partial^{I_2}L^{J_2}\phi\|_{L^2(\Sigma_{\tau'}}\,d\tau' \\
     & \le C_1 \int_{\tau_0}^\tau \frac t\tau t^{-\frac32}C\varepsilon (\tau')^{(|I'_1|+|J_1|+2)\delta}C\varepsilon (\tau')^{\frac12+(|I_2'|+|J_2|)\delta}\,d\tau' \\
     & \le C_1 C^2\varepsilon^2  \tau^{(N+2)\delta},
\end{align*}
and
\begin{align*}
   & \sum_{\substack{I_1,J_1 \\ I_2,J_2}}\sum_{j=1}^3\int_{\tau_0}^{\tau} \|\partial^{I_1}L^{J_1}\phi \cdot \partial^{I_2}L^{J_2}\partial_j\phi\|_{L^2(\Sigma_{\tau'})}\,d\tau'\\
     & \le 3C_1 \int_{\tau_0}^\tau \|\partial^{I_1}L^{J_1}\phi\|_{L^\infty(\Sigma_{\tau'})}\|\partial^{I_2}L^{J_2}\left( \partial_j+\frac{x^j}{t}\partial_t\right)\phi\|_{L^2(\Sigma_{\tau'})}\,d\tau' \\
    & \qquad + 3C_1 \int_{\tau_0}^\tau \|\partial^{I_1}L^{J_1}\phi\|_{L^\infty(\Sigma_{\tau'})}\|\partial^{I_2}L^{J_2}\partial_t\phi\|_{L^2(\Sigma_{\tau'})}\,d\tau'.
\end{align*}
We already have a suitable bound for the second integral. For the first integral, we use the fact $\|(\partial_j+\frac{x^j}{t}\partial_t)\phi\|_{L^2(\Sigma_\tau)}\le E[\phi](\tau)^\frac12$ to get
\begin{align*}
   & 3C_1 \int_{\tau_0}^\tau \|\partial^{I_1}L^{J_1}\phi\|_{L^\infty(\Sigma_{\tau'})}\|\partial^{I_2}L^{J_2}\left( \partial_j+\frac{x^j}{t}\partial_t\right)\phi\|_{L^2(\Sigma_{\tau'})}\,d\tau' \\
     & \le 3C_1\int_{\tau_0}^\tau t^{-\frac32}C\varepsilon \tau^{(|I_1'|+|J_1|+2)\delta}C\varepsilon \tau^{\frac12+(|I_2'|+|J_2|)\delta}\,d\tau \\
    & \le 3C_1 C^2\varepsilon^2\tau^{(N+2)\delta}.
\end{align*}
By choosing $\varepsilon$ small enough we obtain better estimates compared to the initial bootstrap assumptions, so that $\tau^*=+\infty$. 
\section*{Acknowledgements}
Funded by the Deutsche Forschungsgemeinschaft (DFG, German Research Foundation) -- IRTG 2235 -- Project-ID 282638148.
The author thanks Sebastian Herr for helpful discussions, and Gustav Holzegel for valuable feedback on an earlier version of the manuscript.


\end{document}